\theoremstyle{plain}
\newtheorem{theorem}{Theorem}[section]
\newtheorem{lemma}[theorem]{Lemma}
\theoremstyle{definition}
\theoremstyle{remark}
\newtheorem{remark}[theorem]{Remark}
\numberwithin{equation}{section}
\newcommand{\AXY}{\mathcal{AXY}_\varepsilon(\Omega)}
\newcommand{\newatop}{\genfrac{}{}{0pt}{1}} 
\newcommand{\co}{\mathrm{Co}\,}
\newcommand{\e}{\varepsilon}
\newcommand{\el}{\mathrm{el}}
\newcommand{\ep}{\varepsilon}
\newcommand{\vep}{\varepsilon}
\newcommand{\vth}{\vartheta}
\newcommand{\vph}{\varphi}
\newcommand{\ffi}{\varphi}
\newcommand{\W}{\mathcal{W}}
\newcommand{\nren}{\mathcal{W}}
\newcommand{\Rect}{\mathrm{Rect}}
\newcommand{\C}{\mathbb{C}}
\newcommand{\R}{\mathbb{R}}
\newcommand{\N}{\mathbb{N}}
\newcommand{\Z}{\mathbb{Z}}
\newcommand{\F}{\mathcal{F}}
\newcommand{\Om}{\Omega}
\newcommand{\D}{\mathcal{D}}
\newcommand{\DD}{{\mathcal{D}}_M^\n}
\newcommand{\lin}{\Gamma}
\newcommand{\Lin}{\Omega_\rho^\lin(\mu)}
\newcommand{\weakly}{\rightharpoonup}
\newcommand{\eff}{\mathrm{sym}}
\newcommand{\weakstar}{\stackrel{*}{\weakly}}
\newcommand{\flt}{\stackrel{\mathrm{flat}}{\rightarrow}}
\newcommand{\fla}{\mathrm{flat}}
\newcommand{\AF}{\mathcal{AF}_{\varepsilon}(\Omega)}
\newcommand{\loc}{\mathrm{loc}}
\newcommand{\negl}{\mathrm{ng}}
\newcommand{\lef}{\mathrm{left}}
\newcommand{\rig}{\mathrm{right}}
\newcommand{\res}{\mathop{\hbox{\vrule height 7pt width .5pt depth 0pt \vrule height .5pt width 6pt depth 0pt}}\nolimits}
\newcommand{\tube}{T^N_{\delta,\rho}}
\newcommand{\tubep}{T^N_{\delta,\rho,\ep}}
\newcommand{\n}{{\bf n}}
\newcommand{\nn}{(\n)}
\newcommand{\Int}{A}
\DeclareMathOperator{\diff}{d}
\DeclareMathOperator{\ud}{d}
\DeclareMathOperator{\diam}{diam}
\DeclareMathOperator{\dist}{dist}
\DeclareMathOperator{\di}{dist}
\DeclareMathOperator{\supp}{supp}
\title[Fractional vortices and string defects] {$\Gamma$-convergence analysis\\ of a generalized $XY$ model:\\ fractional vortices and string defects}
\author[R. Badal]
{R. Badal}
\address[Rufat Badal]{Zentrum Mathematik - M7, Technische Universit\"at  M\"unchen, Boltzmannstrasse 3, 85748 Garching, Germany}
\email[R. Badal]{badal@ma.tum.de}
\author[M. Cicalese]
{M. Cicalese}
\address[Marco Cicalese]{Zentrum Mathematik - M7, Technische Universit\"at  M\"unchen, Boltzmannstrasse 3, 85748 Garching, Germany}
\email[M. Cicalese]{cicalese@ma.tum.de}
\author[L. De Luca]
{L. De Luca}
\address[Lucia De Luca]{Zentrum Mathematik - M7, Technische Universit\"at  M\"unchen, Boltzmannstrasse 3, 85748 Garching, Germany}
\email[L. De Luca]{deluca@ma.tum.de}
\author[M. Ponsiglione]
{M. Ponsiglione}
\address[Marcello Ponsiglione]{Dipartimento di Matematica ``Guido Castelnuovo'', Sapienza Universit\`a di Roma, P.le Aldo Moro 5, I-00185 Roma, Italy}
\email[M. Ponsiglione]{ponsigli@mat.uniroma1.it}
\begin{document}

\maketitle
\begin{abstract}
We propose and analyze a generalized two dimensional $XY$ model, whose  interaction potential has  $n$ weighted  wells, describing corresponding symmetries of the system.   As the lattice spacing vanishes, we derive by $\Gamma$-convergence the  discrete-to-continuum limit of this model. 
In the energy regime we deal with, the asymptotic ground states exhibit   fractional vortices, connected by string defects. 
The $\Gamma$-limit takes into account both contributions, through a renormalized energy, depending on the configuration of fractional vortices, and a surface energy, proportional to the length of the strings. 

Our model describes in a simple way  several topological singularities arising  in Physics and Materials Science. Among them, 
disclinations and string defects in liquid crystals,  fractional vortices and domain walls in  micromagnetics,  partial dislocations and stacking faults in crystal plasticity.

\end{abstract}
\vspace{.5cm}

\noindent{\bf Keywords:} $XY$ spin systems, Ginzburg-Landau, Liquid Crystals, Dislocations, Calculus of Variations, $\Gamma$-convergence\\

\noindent{\bf Mathematical Classification:} 82D30, 82B20, 49J45 

\tableofcontents

\section*{Introduction}
Since the pioneering paper by Kosterlitz and Thouless \cite{KT}, the $XY$ model is considered the classical example of discrete spin system exhibiting phase transitions mediated by the formation and the interaction of topological singularities. 
%This simple, elegant, and at the same time predictive theory became more and more popular also in the mathematical community. 
Even at zero temperature the model presents interesting features: Depending on the energy regime, the geometry of the ground states is very rich, going from uniform to disordered states, all the way through isolated vortex singularities and clustered dipoles. Both topology and energy concentration take place at different length scales, thus making the analysis fascinating and popular also in the mathematical community.

%From the point of view of mathematical analysis, the popularity of these simple, elegant, and at the same time predictive theories can be traced back to the non trivial behaviour of zero temperature ground states which are very sensitive to energy scaling, thus making their study an interesting field of research.\\

%Its continuous counterpart, the celebrated Ginzburg-Landau model for superconductivity, has been widely studied both by the physical and by the mathematical communities as  a never ending source of interesting questions. 

In this paper we focus on the variational analysis of a two dimensional modified $XY$ model at zero temperature, describing the formation of fractional vortices and string defects. Our main motivation comes from observing that this kind of singularities characterizes several discrete systems in Physics and Materials Science such as disclinations and string defects in liquid crystals \cite{PMC}, fractional vortices and domain walls in micromagnetics \cite{TC},  partial dislocations and stacking faults in crystal plasticity \cite{HB}. We do not focus on the specific details of any of such models, rather we aim at providing a simple variational model highlighting some of their relevant common features. 

Given a bounded open set $\Omega\subset \R^2$, the classical $XY$ energy on the lattice $\e\Z^{2}\cap\Omega$ is given by
\begin{equation}
XY_{\e}(v)=-\sum_{\langle i,j\rangle}(v(i),v(j)),
\end{equation}
where the sum is taken over pairs of nearest neighbors $\langle i,j\rangle$; i.e., $i,j\in\e\Z^{2}\cap\Omega$ with $|i-j|=\e$, and $v:\Omega\cap\ep\Z^2\to\mathcal S^1$ is the spin field. By writing $v=e^{i\vth}$, the $XY$ energy can be rewritten - up to constants - as
\begin{equation}\label{intro-XY}
XY_{\e}(v)=\frac 1 2\sum_{\langle i,j\rangle}f(\vth(j)-\vth(i)),
\end{equation}
where $f(t)=1-\cos(t)$. 
The rigorous upscaling as $\ep\to 0$ of the $XY_\e$ functional - and more in general of discrete spin systems governed by $2\pi$-periodic potentials $f$ - has been recently obtained in \cite{AC, ACP, ADGP, DL} in terms of $\Gamma$-convergence. Loosely speaking, configurations with energy of order  $C |\log\ep|$  exhibit  a finite number (controlled by the pre-factor $C$) of vortex-like singularities.
%equal to the degree of the imposed boundary condition.  
Around each singularity the order parameter $v$ looks like a fixed rotation of the map $\big(\frac{x}{|x|}\big)^{d}$ where $d\in\Z$ is the degree of the singularity. There the energy concentrates and blows up as $|d||\log \ep|$. This analysis  is then refined exploiting the next lower order term in the energy expansion. Indeed, after removing the logarithmic leading order contribution, a finite interaction energy, referred to as {\it renormalized energy}, remains. The renormalized energy depends on the positions and the degrees of the vortices, and it is considered the main driving force responsible for their dynamics \cite{ADGP,ADGP2}. The strategy adopted to analyze the $XY$ model exploits methods and tools from the earlier analysis developed for the continuous Ginzburg-Landau functionals \cite{BBH, JS, ABO, SS, AP}. Moreover, these techniques have been successfully used to understand in terms of $\Gamma$-convergence the well-known analogy between vortices in the $XY$ model and in superconductivity, and screw dislocations in anti-plane linearized elasticity \cite{ACP, P}.

To some extent, this analogy extends {\it mutatis mutandis} to many other topological defects in  liquid crystals, micromagnetics and crystal plasticity. Let us introduce and motivate our model using the language of planar uniaxial nematics. In the Oseen-Frank and Ericksen picture, nematics are described by a vector field $u=e^{i\ffi}$, the {\it director}, that (in two dimensions) takes values in $\mathcal S^1$. The molecules are supposed to interact via an elastic potential depending only on their orientation $\ffi$. If the nematic molecules present
 {\it head-to-tail symmetry}, the interaction potential is $\pi$-periodic. Therefore there is a clear identification between a $2$-dimensional nematic lattice model and the $XY$ spin model. Doubling the phase variable $\ffi$ of the director, the interaction potential becomes $2\pi$-periodic so that the nematic energy can be written as in \eqref{intro-XY} with $\vth=2\ffi$. This corresponds to the Lebwohl and Lasher functional (\cite{LL}) recently studied by $\Gamma$-convergence in \cite{BCS}. Since in the classical $XY$ model integer vortices appear, as result of the identification $\vth=2\ffi$, for liquid crystals the relevant topological singularities are half vortices, namely the disclinations.

The presence of half vortices is related to orientability issues of the director, that can be better described and visualized in a continuous framework: 
in a configuration consisting of two half vortices, the director field cannot be smooth everywhere. Indeed, it has antipodal orientations on the two sides of a line joining the half vortices. According to \cite{LG}, we call this kind of discontinuity lines {\em string defects} (see Figure \ref{fig: dipoles}). A first rigorous study of orientability issues related to the presence of half vortices can be found in \cite{BZ2}, while
the right space to describe discontinuous directors has been very recently identified in \cite{Bed} as a suitable subspace of $SBV$, the space of {\it special functions with bounded variation}.

\begin{figure}[http!]
\begin{center}
\includegraphics[scale=0.166 ]{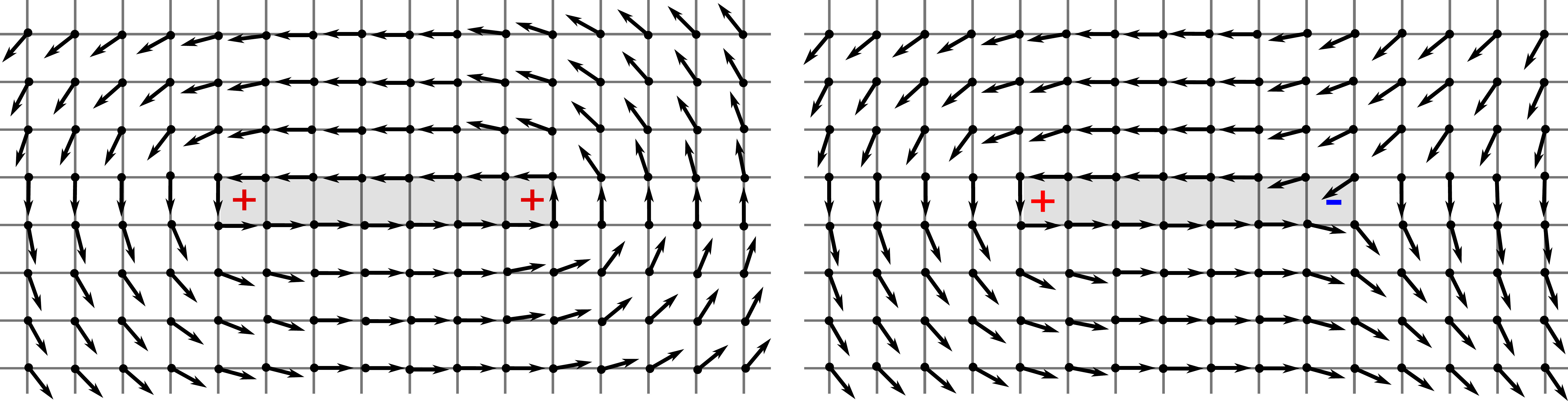}
\caption{The spin field close to two half vortex pairs (the degree of the vortices is $(+1/2,+1/2)$ in the left picture and $(+1/2,-1/2)$ in right one). The shaded regions highlight necessary antipodal spins forming the string defects.}\label{fig: dipoles}
\end{center}
\begin{picture}(0,0)
\end{picture}
\end{figure}
In this paper we provide a quantitative  analysis of such orientability issues, by introducing energy functionals which describe 
 both half, and in fact fractional vortices,  and string defects. To this end, we propose a modified $XY$ model, that draws back to \cite{CC, K, LG, PMC, BW}. We consider $2\pi$-periodic potentials, acting on nearest neighbors, and having $\n\in\N$ wells in $[0,2\pi)$; in one well the potential is zero, while in the remaining $\n -1$ it is positive but vanishing as $\ep$ goes to zero. In this way, we model systems with one symmetry, and $\n -1$ vanishing  asymmetries. The case $\n=2$ provides a simple toy model for  liquid crystals with small head-to-tail asymmetry, like those made of non centrosymmetric or chiral molecules \cite{LT}.

More specifically,  we consider  potentials $f_{\e}^{(\n)}$ with $\n -1$ wells of order $\ep$ (see Figure \ref{pot}). 
\begin{figure}[h!]
	\centering
	\def\svgwidth{0.9\textwidth}
	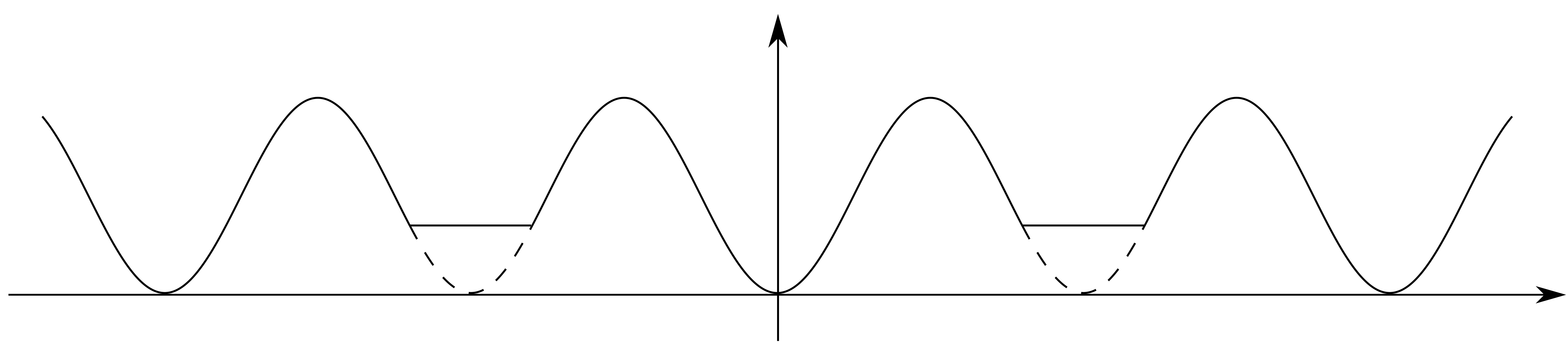
	\caption{ The plot of the potential $f_\ep^{\nn}$ for $\n=2$ obtained truncating $1-\cos(2t)$ at level $\e$ around the odd wells.}	\label{pot}
\end{figure}
Having $\n$ wells in the potential produces  fractional vortices of degree $\pm \frac{1}{\n}$, while   
the additional energy  due to the weighted $\n -1$ wells yields, together with the renormalized energy, a new term in the limit, depending on the length of the string defects. 
The specific $\e$-scaling of the wells makes finite the string defect energy, and hence of the same order of the renormalized energy.
Other scalings could be considered, leading to different limit theories.  

Let us describe our model and results in more details. Given a discrete director field configuration $u_{\e}=e^{i\ffi_{\e}}$, we introduce the modified $XY$ energy functional 

\begin{equation}\label{intro: energy}
	\F^{\nn}_\vep(\vph) := \frac 1 2\sum_{\langle i,j\rangle} f^{\nn}_\vep(\vph(j)-\vph(i)).
\end{equation}
We show that configurations $u_{\e}=e^{i\ffi_{\e}}$ with
$
\F^{\nn}_\vep(\vph_{\e})\simeq M\pi|\log\e|
$
are consistent with the formation of $M$ fractional vortices at $x_{1},\dots,x_{M}$ having degrees $\pm \frac{1}{\n}$. Moreover,  up to a subsequence, $u_{\e}$ strongly converges in $L^{1}(\Omega;\R^2)$ to some unitary field $u$ belonging to $SBV(\Omega;\mathcal S^1)$ with $u^{\n}\in H^{1}_{\loc}(\Omega\setminus\{x_{1}, \dots, x_{M}\} ; \mathcal S^1)$. Indeed in Theorem \ref{mainthm} we prove that the functionals $\F^{\nn}_\vep(\vph_{\e})-M\pi|\log\e|$ $\Gamma$-converge to the limit energy $\F^{(\n)}_0(u)$ given by
\begin{equation}\label{Gammalim0}
\F^{(\n)}_0(u):= \W(u^\n)+M\gamma+\int_{S_u}|\nu_u|_1\,{\ud\mathcal H}^{1}.
\end{equation}
Here $\W(u^\n)$ represents the renormalized energy; it depends on the positions and signs of the fractional vortices, but also on the excess energy  due to unneeded oscillations of $u$; indeed, minimizing $\W(u^\n)$ with respect to all director fields $u$ compatible with the given configuration of fractional vortices gives back the classical 
renormalized energy for vortices in superconductivity \cite{BBH}. The constant $\gamma$ is a core energy stored around each singularity, and has memory of the discrete lattice structure. The third addendum is the anisotropic length of the string, reminiscent of the lattice symmetries. We note that the analysis by $\Gamma$-convergence of discrete functionals leading to such anisotropic interfacial energies is a well established research field \cite{ABC,BC,CDL}. On the other hand, limit energies as in \eqref{Gammalim0} may appear also as $\Gamma$-limits of  continuous Ginzburg-Landau type functionals. Actually, in \cite{GMM} the authors propose and study a variant of the standard Ginzburg-Landau energy,  that could be seen as a continuous counterpart of our modified $XY$ model. 

We remark that our $\Gamma$-convergence result can be considered the first  step towards the study of the joint dynamics of strings and vortices, as well as partial dislocations and stacking faults. 
We expect that the competition between the surface and the renormalized energies generates non-trivial metastable configurations. As a matter of fact, neglecting boundary effects, two equally charged vortices -  in the classical $XY$ model - repel each other with a force proportional to the inverse of their distance, so that they are never in equilibrium.
On the other hand, since in the generalized $XY$ model the surface term diverges with the distance between the vortices, there exists a critical length at which the two forces balance and equilibrium is reached.  In this respect, it seems interesting to investigate the dynamics of strings and fractional vortices driven by the limit energy functional $\F_0^{(\n)}$,  through  both rigorous mathematical analysis  and  numerical simulations.

%As a matter of fact, fractional vortices and surface type energies are ubiquitous in Materials Science and Physics. 

%Given any configuration of fractional vortices, the minimization of the limit energy functional $\F^{\nn}_\vep(\vph_{\e})$ provides a variational criterion for the position of the string singularities: they are minimal (anisotropic) length connections between fractional vortices. 
%In terms of  liquid crystals, the director jumps on minimal (anisotropic)  length connections between the half vortices, and corresponding to antipodal configurations of the oriented molecules.

\section{Preliminaries on $BV$ and $SBV$ functions}

Here we list some preliminaries on $BV$ and $SBV$ functions that will be useful in the following. We begin by recalling some standard notations. 

Let $A\subset\R^2$ be open bounded  with Lipschitz boundary. As customary,  $BV(A;\R^d)$ (resp. $SBV(A;\R^d)$) denotes the set of functions of bounded variation (resp. special functions of bounded variation) defined on $A$ and taking values in $\R^d$.  Moreover, we set $BV(A):=BV(A;\R)$ and $SBV(A):=SBV(A;\R)$. Given any set $\mathcal I\subset \R^d$, the classes of functions $BV(A;\mathcal I)$ and $SBV(A;\mathcal I)$ are defined in the obvious way.
We refer to the book \cite{AFP} for the definition and the main properties of $BV$ and $SBV$ functions.

Here we recall that the distributional gradient $\mathrm{D} g$ of a function $g\in SBV(A;\R^d)$ can be decomposed as:
\[
	\mathrm{D} g=\nabla g\,\mathcal L^2\res A+(g^+ - g^-)\otimes \nu_g \,\mathcal{H}^{1}\res{S_g},
\]
where $\nabla g$ is the approximate gradient of $g$, $S_g$ is the jump set of $g$, $\nu_g$  is a unit normal to $S_g$ and $g^{\pm}$ are the approximate trace values of $g$ on  $S_g$.

For $p > 1$, $SBV^p(A;\R^d)$ denotes the subspace of $SBV(A;\R^d)$ defined by
$$
SBV^p(A;\R^d):=\{g\in SBV(A;\R^d) \, : \, \nabla g \in L^p(A;\R^{2d}), \, \mathcal H^{1}(S_g) <+\infty\}.
$$
Moreover, $SBV^p_\loc(A;\R^d)$ denotes the class of functions belonging to $SBV^p(A';\R^d)$ for all $A'\subset\subset A$ open.

%We say that a sequence $\{g_h\} \subset SBV^p(A;\R^d)$ weakly converges  in $SBV^p(A;\R^d)$ to a function $g\in SBV^p(A;\R^d)$, and we write that $g_h\weakly g$ in $SBV^p(A;\R^d)$,  if 
%\begin{eqnarray*}
%	&& g_h \to g \textrm{ (strongly) in } L^1(A;\R^d),\\
%	&&\nabla g_h \weakly \nabla g \textrm{ (weakly) in } L^p(A;\R^{d \times 2}),\\
%	&&\sup_{h\in\N} \mathcal H^1(S_{g_h}) <+\infty, \\
%	&&\liminf_{h\to\infty} \mathcal H^1(S_{g_h}\cap A') \geq \mathcal H^1(S_g \cap A'),
%\end{eqnarray*}
%for any open set $A'\subseteq A$. We say that $\{g_h\} \subset SBV_\loc^p(A;\R^d)$ weakly converges  in $SBV_\loc^p(A;\R^d)$ to a function $g\in SBV_\loc^p(A;\R^d)$, and we write that $g_h\weakly g$ in $SBV_\loc^p(A;\R^d)$, if  $g_h\weakly g$ in $SBV^p(A';\R^d)$ for any open set $A'\subset\subset A$.

We recall three results about compactness, lower semicontinuity and density properties of $SBV^p$ and $SBV^p_\loc$ functions. 
%which will be useful in the following. 
%Such results concern respectively the compactness of $SBV^p$ sequences, the lower semicontinuity of integral functionals with respect to the weak convergence in $SBV^p$ and the density of $SBV$ functions with polyhedral jump in the set of $SBV$ functions taking values in a finite set.

\begin{theorem}[Compactness \cite{A}]\label{SBVComp}
	Let $\{g_h\}\subset SBV^p(A;\R^d)$ for some $p > 1$. Assume that there exists $C>0$ such that
	\begin{equation}\label{compactnessCondition}
		\int_{A} |\nabla g_h|^p \diff x + \mathcal{H}^{1}(S_{g_h}) + \left\| g_h	\right\|_{L^\infty (A;\R^d)}\leq C\quad\textrm{for all }h\in\N.
	\end{equation}
	Then, there exists $g \in SBV^p(A; \R^d)$ such that, up to a subsequence,
\begin{equation}\label{sbvconv}
\begin{split}
	&g_h \to g \textrm{ (strongly) in } L^1(A;\R^d),\\
	&\nabla g_h \weakly \nabla g \textrm{ (weakly) in } L^p(A;\R^{2d}),\\
	&\liminf_{h\to\infty} \mathcal H^1(S_{g_h}\cap A') \geq \mathcal H^1(S_g \cap A'),
\end{split}
\end{equation}
for any open set $A'\subseteq A$.
\end{theorem}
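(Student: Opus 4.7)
The plan is to combine classical $BV$-compactness with Ambrosio's $SBV$-closure theorem. First one secures $L^1$ convergence via a uniform total variation bound; then one identifies the limit as an $SBV^p$ function by a slicing argument together with the $L^p$ integrability of the approximate gradients.

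\textbf{Step 1 (uniform $BV$ bound and $L^1$ compactness).} For each $h$ the distributional gradient splits as
\[
|\mathrm{D}g_h|(A) \;=\; \int_A |\nabla g_h|\,\ud x \;+\; \int_{S_{g_h}} |g_h^+ - g_h^-|\,\ud\mathcal H^1.
\]
By H\"older the first term is controlled by $|A|^{1-1/p}\|\nabla g_h\|_{L^p(A;\R^{2d})}$, and the second is bounded by $2\|g_h\|_{L^\infty}\mathcal H^1(S_{g_h})$; both quantities are uniformly bounded by hypothesis \eqref{compactnessCondition}. Hence $\sup_h |\mathrm{D}g_h|(A)<\infty$. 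Combined with the $L^\infty$ bound, classical $BV$-compactness yields a (not relabelled) subsequence and $g\in BV(A;\R^d)$ with $g_h\to g$ in $L^1(A;\R^d)$ and $\mathrm{D}g_h\weakstar \mathrm{D}g$ as $\R^{2d}$-valued measures. Since $\{\nabla g_h\}$ is bounded in $L^p$ with $p>1$, by reflexivity a further subsequence satisfies $\nabla g_h\weakly v$ in $L^p(A;\R^{2d})$ for some $v$. It remains to prove $g\in SBV^p(A;\R^d)$, that $v=\nabla g$, and the jump-set lower semicontinuity.

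\textbf{Step 2 (identification via slicing).} For every direction $\xi\in\mathcal S^1$ and a.e.\ $y\in \xi^\perp$, the one-dimensional sections $g_h^{\xi,y}$ belong to $SBV^p$ on their domains, and by the coarea-type slicing formula for $SBV$
\[
\int_{\xi^\perp}\Bigl(\#\bigl(S_{g_h^{\xi,y}}\bigr) + \int |(g_h^{\xi,y})'|^p\,\ud t\Bigr)\ud\mathcal H^1(y) \;\le\; \mathcal H^1(S_{g_h}) + \|\nabla g_h\|_{L^p}^p \;\le\; C.
\]
On a.e.\ slice one applies the one-dimensional $SBV^p$ closure theorem to the converging subsequence; a Fubini/diagonal argument then transfers the conclusions to $g$ and, by the slicing characterisation of $SBV$, forces $g\in SBV^p(A;\R^d)$ with $\nabla g=v$. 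Applying the same slicing argument to any open $A'\subseteq A$ and using Fatou's lemma in the transverse variable yields
\[
\liminf_{h\to\infty}\mathcal H^1(S_{g_h}\cap A')\;\ge\;\mathcal H^1(S_g\cap A').
\]

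\textbf{Main obstacle.} Steps producing the $L^1$-limit $g\in BV$ and the weak $L^p$ limit $v$ of the gradients are routine; the delicate point is ruling out a Cantor-type singular part in $\mathrm{D}g$ and proving the identification $v=\nabla g$, which could fail if part of $v$ were absorbed into a concentrated singular piece. This is precisely where the $p>1$ integrability of $\nabla g_h$ is essential: it furnishes equi-integrability and prevents any absolutely continuous mass from concentrating onto lower-dimensional sets, so that the weak $L^p$-limit of $\nabla g_h$ remains absolutely continuous and equals $\nabla g$. The one-dimensional $SBV^p$ closure used on slices is itself the hardest ingredient, and is exactly the place where the argument cannot be reduced to general $BV$ theory.
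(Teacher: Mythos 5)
The paper does not prove this statement: it is quoted as Ambrosio's compactness theorem with the reference \cite{A} (see also \cite{AFP}), so there is no internal proof to compare yours against. Your outline follows the standard slicing route to that theorem, and the overall strategy (uniform $BV$ bound and $BV$-compactness for the $L^1$ limit, reflexivity of $L^p$ for a weak limit $v$ of the gradients, one-dimensional closure on sections, slicing characterization of $SBV$ to reassemble) is the right one. Step 1 is complete as written; note only that the compact embedding $BV(A)\hookrightarrow L^1(A)$ uses the Lipschitz regularity of $\partial A$, which the paper assumes.

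Step 2, however, leaves the genuinely hard points as black boxes. First, your displayed slicing estimate is only an integral bound over $y\in\xi^\perp$; it does not give, on a fixed slice, a bound on $\#S_{g_h^{\xi,y}}+\int|(g_h^{\xi,y})'|^p\,\ud t$ that is uniform in $h$, which is what the one-dimensional closure theorem needs. One must pass, via Fatou, to the full-measure set of $y$ where $\liminf_h$ of the slice quantity is finite, extract a $y$-dependent subsequence realizing that $\liminf$, apply the $1$D closure there, and then integrate the resulting slice estimates for $g$ back up; only at that point does the slicing characterization of $SBV$ yield $g\in SBV^p$, $v=\nabla g$, and the absence of a Cantor part. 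Second, Fatou in the transverse variable for a single direction $\xi$ only gives $\liminf_h\int_{S_{g_h}\cap A'}|\langle\nu_{g_h},\xi\rangle|\,\ud\mathcal H^1\ge\int_{S_g\cap A'}|\langle\nu_g,\xi\rangle|\,\ud\mathcal H^1$; to reach $\liminf_h\mathcal H^1(S_{g_h}\cap A')\ge\mathcal H^1(S_g\cap A')$ you must combine countably many directions through the integral-geometric representation of $\mathcal H^1$ on rectifiable sets and the standard supremum-of-measures localization lemma. Finally, in your closing paragraph the equi-integrability of $\nabla g_h$ is not by itself what excludes a Cantor part: the jump parts $D^jg_h$, though of uniformly bounded mass, could a priori diffuse into a Cantor measure in the limit, and it is the uniform bound on $\mathcal H^1(S_{g_h})$ fed into the $1$D closure on slices that forbids this. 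None of these are wrong turns, but they constitute the actual content of Ambrosio's theorem, so what you have is a correct outline rather than a proof; for the purposes of this paper it would be appropriate simply to cite the result, as the authors do.
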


In the following, we say that a sequence $\{g_h\} \subset SBV^p(A;\R^d)$ weakly converges  in $SBV^p(A;\R^d)$ to a function $g\in SBV^p(A;\R^d)$, and we write that $g_h\weakly g$ in $SBV^p(A;\R^d)$,  if $g_h$ satisfy \eqref{compactnessCondition} and $g_h\to g $ in $L^1(A;\R^d)$.
Moreover,  we say that $\{g_h\} \subset SBV_\loc^p(A;\R^d)$ weakly converges  in $SBV_\loc^p(A;\R^d)$ to a function $g\in SBV_\loc^p(A;\R^d)$, and we write that $g_h\weakly g$ in $SBV_\loc^p(A;\R^d)$, if  $g_h\weakly g$ in $SBV^p(A';\R^d)$ for any open set $A'\subset\subset A$.

\begin{theorem}[Lower semicontinuity \cite{AB,AFP}]\label{lscSBV}
Let $K\subset\R^d$ be compact and let $C_{0}>0$. Let $\Theta\in C (K\times K; [C_{0},\infty))$ be a positive, symmetric function satisfying 
\begin{equation}\label{triangineq}
\Theta(a,c)\le\Theta(a,b)+\Theta(b,c)\qquad\textrm{ for all }a,b,c\in K,
\end{equation}
and let $\Psi\in C(\R^d;\R^+)$ be an  even, convex and positively 1-homogeneous function satisfying $\Psi(\nu)\ge C_{0}|\nu|$ for all $\nu\in\mathcal{S}^1$. 

If $\{g_h\}\subset SBV^p_\loc(A;K)$ and $g_h\weakly g$ in $SBV^p_\loc(A;\R^d)$ for some $p>1$, then
\begin{equation}\label{lscclaim}
	\int_{S_g} \Theta(g^+,g^-)\Psi(\nu_g) \diff\mathcal{H}^{1} \leq \liminf_{h \to \infty} \int_{S_{g_{h}}} \Theta(g_{h}^+,g_{h}^-)\Psi(\nu_{g_{h}}) \diff\mathcal{H}^{1}.
\end{equation}
\end{theorem}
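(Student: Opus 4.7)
My plan is to reduce the two-dimensional inequality to a family of one-dimensional lower semicontinuity statements via an integral–geometric representation for $\Psi$ combined with $SBV$ slicing. Since $\Psi$ is an even, convex, positively $1$-homogeneous function on $\R^2$ with $\Psi\ge C_{0}|\cdot|$, its unit ball is a centrally symmetric convex body. In two dimensions every such body is a zonoid, hence there exists a finite positive Borel measure $\mu$ on $\mathcal S^1$ with
\[
\Psi(\nu)=\int_{\mathcal S^1}|\langle\nu,\xi\rangle|\,\ud\mu(\xi).
\]
By Fubini–Tonelli it then suffices to prove \eqref{lscclaim} with $\Psi(\nu_g)$ replaced by $|\langle\nu_g,\xi\rangle|$ for each fixed $\xi\in\mathcal S^1$, and to integrate the resulting bound against $\ud\mu(\xi)$.

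Fix a direction $\xi\in\mathcal S^1$. The classical $SBV$ slicing theory (see \cite{AFP}) expresses the jump integral weighted by $|\langle\nu_{g_h},\xi\rangle|$ as the integral over $\xi^{\perp}$ of the one-dimensional jump sum along the line $y+\R\xi$. From the weak $SBV^p_\loc$ convergence $g_h\weakly g$ I extract a subsequence for which the slices $g_h^{\xi,y}\to g^{\xi,y}$ in $L^1_\loc$ for $\mathcal H^1$-a.e.\ $y\in\xi^{\perp}$, together with uniform bounds on $\#S_{g_h^{\xi,y}}$ coming from \eqref{compactnessCondition}. Fatou's lemma then reduces the problem to proving, on a.e.\ slice, the one-dimensional statement
\[
\sum_{t\in S_u}\Theta(u^+(t),u^-(t))\le\liminf_{h\to\infty}\sum_{t\in S_{u_h}}\Theta(u_h^+(t),u_h^-(t)).
\]

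For the 1D statement I would localize around each jump point $t^{*}$ of $u$ in a small interval $(t^{*}-\delta,t^{*}+\delta)$ containing no other jumps of $u$; by $L^1$ convergence the approximate one-sided values of $u_h$ at the endpoints tend to $u^{\mp}(t^{*})$. Listing the finitely many jumps of $u_h$ inside this interval as a chain $c_0^h,c_1^h,\dots,c_{k^h}^h$, the triangle inequality for $\Theta$ gives
\[
\sum_{j=1}^{k^h}\Theta(c_{j-1}^h,c_j^h)\ge\Theta(c_0^h,c_{k^h}^h),
\]
and the continuity of $\Theta$ together with $c_0^h\to u^-(t^{*})$ and $c_{k^h}^h\to u^+(t^{*})$ yields the contribution $\Theta(u^-(t^{*}),u^+(t^{*}))$ in the liminf. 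Summing over the at most countably many jump points of $u$ (approximating by finite subfamilies if $S_u$ is infinite, and discarding extra jumps of $u_h$ outside the chosen intervals, whose contribution is non-negative) completes the argument. The hard part is precisely this collapsing step: without the triangle inequality one could not bound a broken chain of jumps from below by the net jump cost, and without the uniform positivity $\Theta\ge C_{0}>0$ combined with \eqref{compactnessCondition} one could not control the number of intermediate values $c_j^h$ in each chain.
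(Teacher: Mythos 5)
The paper does not actually prove this theorem: it is quoted as a known result from \cite{AB} and \cite{AFP}, so there is no internal proof to compare against. Your reconstruction follows the classical route of those references (reduction to directional integrands, one-dimensional sectioning, chain collapsing via the triangle inequality), with one two-dimensional twist: you represent $\Psi$ through the elementary integrands $|\langle\cdot,\xi\rangle|$ using the fact that every centrally symmetric convex body in the plane is a zonoid. Since the normals $\nu_g$ live in $\R^2$ here, that step is legitimate and is a clean alternative to the ``jointly convex'' representation used in \cite{AFP}. The slicing identity and the Fatou/subsequence bookkeeping are standard, with the caveat that \eqref{compactnessCondition} only gives $\sup_h\int_{\xi^\perp}\sharp S_{g_h^{\xi,y}}\,\ud y<\infty$, not a uniform bound on $\sharp S_{g_h^{\xi,y}}$ for a.e.\ \emph{fixed} $y$; for a fixed slice you obtain such a bound only along a $y$-dependent subsequence, or from $\Theta\ge C_0$ once the sliced energy liminf is assumed finite.

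The genuine gap is in the one-dimensional collapsing step. You list ``the finitely many jumps of $u_h$ inside this interval as a chain $c_0^h,\dots,c_{k^h}^h$'' and identify $\sum_j\Theta(c_{j-1}^h,c_j^h)$ with the jump energy of $u_h$ there; this identification is valid only if $u_h$ is piecewise constant. In general $u_h$ has a nontrivial absolutely continuous part, so $u_h^+(t_j)\neq u_h^-(t_{j+1})$, and you cannot insert the intermediate AC transitions as extra links of the chain: since $\Theta\ge C_0>0$ even on the diagonal, each spurious link costs at least $C_0$ and the chain sum overestimates the jump energy by up to $k^h C_0$. Note also that the statement would be false without the $L^p$ bound on the approximate gradients with $p>1$ (a jump of $u$ can be mimicked by steep $W^{1,1}$ transitions carrying no surface energy), and your one-dimensional argument never invokes that bound. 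The repair is standard: by H\"older, the AC oscillation of $g_h^{\xi,y}$ on an interval of length $2\delta$ is at most $C\delta^{1-1/p}$, hence $|u_h^+(t_j)-u_h^-(t_{j+1})|\le C\delta^{1-1/p}$ for consecutive jumps, and by the uniform continuity of $\Theta$ on the compact set $K\times K$ the broken chain can be replaced by an exact one at the price of an error $k^h\,\omega_\Theta(C\delta^{1-1/p})$, which vanishes as $\delta\to 0$ precisely because $k^h$ stays bounded (this is where $\Theta\ge C_0$ and the finiteness of the liminf enter) and $p>1$. With this correction, and the slice-wise subsequence remark above, your argument goes through.
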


Finally, we state the density result  proved in \cite[ Theorem 2.1 and Corollary 2.4]{BCG}.
First we recall that a polyhedral set in $\Omega$ is (up to a $\mathcal H^1$ negligible set) a finite union of segments contained in $\Omega$.

\begin{theorem}[Density \cite{BCG}]
\label{polyapprox}
Let $\mathcal Z\subset\R^d$ be finite and let  $g \in SBV(A; \mathcal Z)$. Then there exists a sequence $\{g_h\} \subset SBV(A;\mathcal Z)$ such that $S_{g_h}$ is polyhedral, $g_h \weakly g$ in $SBV^p(A; \R^d)$ for any $p>1$, and 
$$
	\lim_{h \to \infty} \int_{S_{g_h}} \phi(g_h^+, g_h^-,\nu_{g_h}) \ud \mathcal H^{1} = \int_{S_g} \phi( g^+, g^-,\nu_g) \ud \mathcal H^{1}
$$
for any continuous function $\phi \colon \mathcal Z \times \mathcal Z\times\mathcal S^1 \to \R_+$ satisfying $\phi( a, b,\nu) = \phi( b, a,-\nu)$ for all $( a, b,\nu) \in \mathcal Z \times \mathcal Z\times\mathcal S^1$.
In particular, $|D g_h|(A)\to |D g|(A)$.
\end{theorem}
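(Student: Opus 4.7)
Since $\mathcal Z=\{z_1,\dots,z_N\}$ is finite, the level sets $E_k:=\{g=z_k\}$ form a Caccioppoli partition of $A$: each has finite perimeter (its reduced boundary is contained in $S_g$), $\nabla g=0$ almost everywhere, $g=\sum_k z_k\chi_{E_k}$, and, modulo $\mathcal H^1$-null sets,
$$
S_g=\bigcup_{k<\ell}\bigl(\partial^* E_k\cap\partial^* E_\ell\bigr),
$$
with $\{g^+,g^-\}=\{z_k,z_\ell\}$ on $\partial^*E_k\cap\partial^*E_\ell$ and $\nu_g=\pm\nu_{E_k}$ according to the orientation convention. Hence the whole statement reduces to approximating the partition $\{E_k\}$ by polyhedral partitions $\{E_k^h\}$ with $\chi_{E_k^h}\to\chi_{E_k}$ in $L^1(A)$ and with
$$
\int_{\partial^* E_k^h\cap\partial^* E_\ell^h}\psi(\nu)\,\ud\mathcal H^1\longrightarrow\int_{\partial^* E_k\cap\partial^* E_\ell}\psi(\nu)\,\ud\mathcal H^1
$$
for every pair $(k,\ell)$ and every continuous $\psi\colon\mathcal S^1\to\R^+$.

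My construction would be a single joint approximation. Fix a triangulation $\mathcal T_h$ of $A$ with mesh size $1/h$, and assign to each cell $T\in\mathcal T_h$ the label $k$ that maximises $|T\cap E_k|$. This yields a polyhedral Caccioppoli partition $\{E_k^h\}$, and the corresponding piecewise-constant function $g_h:=\sum_k z_k\chi_{E_k^h}$ converges to $g$ in $L^1(A)$ by Lebesgue differentiation. A blow-up argument on the rectifiable set $S_g$, using that at $\mathcal H^1$-a.e.\ $x\in\partial^*E_k\cap\partial^*E_\ell$ the rescaled sets $(E_k-x)/r$ converge to a half plane as $r\to 0$, produces both the bound $\mathcal H^1(S_{g_h})\le\mathcal H^1(S_g)+o(1)$ and the anisotropic surface convergence above (a Reshetnyak-type continuity statement for the anisotropic total variation of $\chi_{E_k^h}-\chi_{E_\ell^h}$). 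Since $\nabla g_h=\nabla g=0$ trivially and $\|g_h\|_{L^\infty}\le\max_k|z_k|$, the compactness hypotheses of Theorem \ref{SBVComp} are met and $g_h\weakly g$ in $SBV^p(A;\R^d)$ for every $p>1$.

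For the integral convergence of $\phi$, decompose
$$
\int_{S_{g_h}}\phi(g_h^+,g_h^-,\nu_{g_h})\,\ud\mathcal H^1=\sum_{k<\ell}\int_{\partial^*E_k^h\cap\partial^*E_\ell^h}\phi(z_k,z_\ell,\nu_{E_k^h})\,\ud\mathcal H^1,
$$
where the symmetry $\phi(a,b,\nu)=\phi(b,a,-\nu)$ ensures that the integrand is independent of the orientation choice for $\nu_{E_k^h}$. Each summand converges to $\int_{\partial^*E_k\cap\partial^*E_\ell}\phi(z_k,z_\ell,\nu_{E_k})\,\ud\mathcal H^1$ by the pairwise anisotropic surface convergence applied with $\psi(\nu):=\phi(z_k,z_\ell,\nu)$, and summing over the $N(N-1)/2$ pairs gives the claim; the consequence $|Dg_h|(A)\to|Dg|(A)$ then follows by taking $\phi(a,b,\nu):=|a-b|$. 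The principal obstacle is precisely the \emph{joint} polyhedral approximation with simultaneous anisotropic convergence of every pairwise interface: independent approximations of the $E_k$'s in the sense of anisotropic perimeter of $\partial^*E_k$ would not produce a partition, and a delicate blow-up analysis is required at the triple (and higher) junction points of $S_g$, where the surface measures of distinct interfaces meet and could otherwise create spurious mass in the limit.
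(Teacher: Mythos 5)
The paper itself does not prove this statement; it quotes it from \cite{BCG}. Your reduction to Caccioppoli partitions, the pairwise decomposition of the surface integral, and the use of the symmetry of $\phi$ are all fine, and you correctly identify the joint (partition-preserving) approximation as the crux. But the construction you actually propose --- majority vote on a triangulation $\mathcal T_h$ fixed in advance --- cannot deliver the energy convergence. The interfaces of $\{E_k^h\}$ are forced to lie on the skeleton of $\mathcal T_h$ and hence carry only finitely many normal directions. Blowing up at an $\mathcal H^1$-a.e.\ point of $\partial^*E_k\cap\partial^*E_\ell$ with normal $\nu$, the discretized interface is a staircase whose length per unit length of the limiting flat interface equals the anisotropy of the grid evaluated at $\nu$ (for the standard square grid this is $|\nu|_1$ when $|\nu|=1$), which is $\ge 1$ with equality only when $\nu$ is a facet direction of $\mathcal T_h$. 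Hence $\mathcal H^1(S_{g_h})\to\int_{S_g}\Phi(\nu_g)\,\ud\mathcal H^1$ for a grid-dependent $\Phi\ge 1$ and generically strictly overshoots $\mathcal H^1(S_g)$; in particular $|Dg_h|(A)\not\to|Dg|(A)$, the claimed bound $\mathcal H^1(S_{g_h})\le\mathcal H^1(S_g)+o(1)$ fails, and the Reshetnyak-type continuity you invoke is unavailable precisely because it presupposes the strict convergence of total variations that your construction destroys. Only the lower semicontinuity inequality is automatic here.

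The missing idea is that the polyhedral interfaces must be aligned with the true approximate normals of $S_g$ rather than with a prescribed grid. The argument of \cite{BCG} covers $\mathcal H^1$-almost all of each interface $\partial^*E_k\cap\partial^*E_\ell$ by small cubes oriented along $\nu_g$ in which the partition is $L^1$-close to the two-phase configuration separated by the flat midplane, replaces the partition inside these cubes by that flat configuration, and controls the spurious surface created on the lateral faces of the cubes and near the junctions via a Vitali-type covering and an iteration; preserving disjointness of the $E_k^h$ during this surgery is exactly the delicacy you flag at the end, but flagging it does not repair your construction. Your $L^1$-convergence step via Lebesgue points is correct and would survive in either approach; the failure is confined to, and fatal for, the surface-energy convergence.
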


\section{Description of the problem}
Let $\n\in\N$ be a fixed natural number,  and
let $\Omega\subset\R^2$ be an open bounded set with Lipschitz continuous boundary, representing the domain of definition of the relevant fields in the models we deal with. 
Additionally, to simplify matter, we assume that $\Omega$ is simply connected. This is convenient  in our constructions where we make use of  Poincar\'e Lemma (as in proof of Lemma \ref{bulklemma}). Nevertheless, this assumption can be removed with some slight additional effort in our proofs, by introducing a finite number of ``cuts'' $\gamma_i$ in $\Omega$, such that $\Omega\setminus\cup_i\gamma_i$ is simply connected.

\subsection{The discrete lattice}

For every $\vep>0$, we set 
%$\Omega_\vep \subset \Omega$ be defined by 
\[
	\Omega_\vep := \bigcup_{i\in\vep\Z^2 \colon i+\vep Q \subset \overline\Omega}(i+\vep Q), 
\]
where $Q=[0,1]^2$ is the unit square.
Moreover we set $\Omega_{\vep}^0 := \vep\Z^2 \cap \Omega_\vep$,
and $\Omega_{\vep}^1 :=
\left\{(i,j)\in\Omega_{\vep}^0\times\Omega_{\vep}^0:|i-j|=\vep,\, [i,j]\subset\Omega_\ep \right\}$, where $[i,j]$ denotes the (closed) segment joining $i$ and $j$\,.
%(where $i\leq j$ means that $i_l\leq j_l$ for $l\in\left\{1,2\right\}$).
These objects represent the reference lattice and the class of nearest neighbors, respectively. The cells contained in $\Omega_{\vep}$ are labeled by the set of indices
$\Omega_{\vep}^2 := \left\{i\in\Omega_{\vep}^0 \colon i+\vep Q\subset\Omega_{\vep}\right\}$.
Finally, we define the discrete boundary of $\Omega$ as
$
	\partial_\vep\Omega := \partial\Omega_{\vep}\cap\vep\Z^2.
$
In the following, we will extend the use of such notations to any given  subset $A$ of $\R^2$.

\subsection{Discrete functions and discrete topological singularities}

Here we introduce the classes of discrete functions on $\Omega_\vep^0$ and a notion of discrete topological singularity.
We first set
\[
	\AF := \left\{\psi \colon \Omega_{\vep}^0\to\R\right\},
\]
%which represents the class of admissible scalar fields on  $\Omega_\vep^0$.
and we introduce the class of admissible fields from $\Omega_\ep^0$ to the set $\mathcal S^1$ of unit vectors in $\R^2$
\[
	\AXY := \left\{w \colon \Omega_{\vep}^0\to\mathcal S^1\right\}.
\]

For any $\psi\in \AF$, $w\in \AXY$, and  for any $(i,j)\in\Omega_\ep^1$ we set
$$
\ud\psi(i,j):=\psi(j)-\psi(i),\qquad\ud w(i,j):=w(j)-w(i).
$$

Following the formalism in \cite{ADGP}, we introduce a notion of discrete vorticity for any given function $\psi\in\AF$.
To this purpose, let $P:\R\to\Z$ be defined as follows
\begin{equation}\label{defdiP}
	P(t)=\mathrm{argmin}\left\{|t-s| \colon s\in 2\pi \Z\right\},
\end{equation}
with the convention that, if the argmin is not unique, then $P(t)$ is the smallest one.
Let $\psi\in\AF$ be fixed. For every $(i,j)\in\Omega_\ep^1$,
 we define
 the signed distance of the discrete gradient of $\psi$ from $2\pi\Z$, as
\begin{equation}\label{elgrad}
\ud^\el\psi(i,j):=\left\{\begin{array}{ll}
\ud\psi(i,j)-P(\ud\psi(i,j))&\textrm{if }i\le j\\
\ud\psi(i,j)+P(\ud\psi(j,i))&\textrm{if }j\le i,
\end{array}\right.
\end{equation}
where $i\leq j$ means that $i_l\leq j_l$ for $l\in\left\{1,2\right\}$. Notice that by definition 
$\ud^\el\psi(i,j) = - \ud^\el\psi(j,i)$ for all $(i,j)\in \Omega_\ep^1$.  
 For every $i\in \Omega_\vep^2$ we define the discrete vorticity of the cell $i+\ep Q$ as
\begin{multline}\label{disctop}
\alpha_\psi(i) := \frac{1}{2\pi}\left(\ud^\el\psi(i,i+\vep e_1)+\ud^\el\psi(i+\vep e_1,i+\ep e_1+\vep e_2)\right.\\
\left.+\ud^\el\psi(i+\vep e_1+\vep e_2,i+\ep e_2) +\ud^\el\psi(i+\vep e_2,i)\right).
\end{multline}
One can easily see that the vorticity $\alpha_\psi$ takes values in $\{-1,0,1\}$.

Stokes' Theorem in our discrete setting reads as follows.
Let $A\subset\Omega$  with $A_\ep$  bounded and simply connected.
Set $L:=\sharp \partial_\ep A$ and let $\partial_\ep A:=\{i^1,\ldots,i^L\}$, with $(i^l,i^{l+1})\in A_\ep^1$ for any $l=1,\ldots,L-1$, and notice that $(i^L,i^1)\in A^1_\ep$. Then, for any $\psi\in\AF$, it holds
\begin{equation}\label{dacitare}\sum_{l=1}^{L-1}\ud^\el\psi(i^l,i^{l+1})+\ud^\el\psi(i^L,i^1)=
2\pi\sum_{i\in A_\ep^2}\alpha_\psi(i).
\end{equation}

We define the vorticity measure $\mu(\psi)$ as follows
\begin{equation}\label{defvor}
	\mu(\psi) :=\pi \sum_{i\in\Omega_\vep^2}\alpha_\psi(i) \delta_{i+ \frac{\vep}{2} (e_1 + e_2)}.
\end{equation}

%Let $\mathcal{M}(\Omega)$ denote the space of Radon measures in $\Omega$; we set
%\begin{multline}\label{Dirac1}
%		 \X :=\Big\{\mu\in\mathcal{M}(\Omega) \colon \mu=\pi\sum_{i=1}^{N}d_i\delta_{x_i},\,N\in\N,\, d_{i}\in\{-1,1\}, x_i \in\Omega,\\
%		 x_i\neq x_j \text{ for } i\neq j \Big\},
%\end{multline}
%\begin{equation}\label{Dirac2}
%		 \Xep :=\Big\{\mu\in X(\Omega) \colon \mu=\pi\sum_{i\in\Omega_\vep^2}\alpha(i) \delta_{i+ \frac{\vep}{2}  (e_1 + e_2)} ,\ \alpha(i)\in \{-1,0,1\}\Big\}.
%\end{equation}
%
%
%\begin{equation}\label{dirac}
%	\begin{split}
%		& \X :=\{\mu\in\mathcal{M}(\Omega) \colon \mu=\pi\sum_{i=1}^{N}d_i\delta_{x_i},\,N\in\N,\, d_{i}\in\{-1,1\}, x_i \in\Omega,\\
%		&\phantom{\mu\in\mathcal{M}(\Omega) \colon \mu=\pi\sum_{i=1}^{N}d_i\delta_{x_i},\,N\in\N}  x_i\neq x_j \text{ for } i\neq j \},\\
%		& \Xep :=\left\{\mu\in X(\Omega) \colon \mu=\pi\sum_{i\in\Omega_\vep^2}\alpha(i) \delta_{i+ \frac{\vep}{2}  (e_1 + e_2)} ,\ \alpha(i)\in \{-1,0,1\}\right\}.
%	\end{split}
%\end{equation}
Let $A\subset\R^2$.
For any $\mu=\pi\sum_{i=1}^Nd_i\delta_{x_i}$ with $N\in\N$, $d_i\in\Z\setminus\{0\}$ and $x_i\in A$,  the {\it flat norm} of $\mu$ is defined as
$$
\|\mu\|_{\fla}:=\sup_{\|\eta\|_{W_0^{1,\infty}(A)}\le 1}\langle\mu,\eta\rangle.
$$
Whenever it will be convenient, we will  declare  the domain $A$ of the test functions $\eta$ in the definition of the flat norm by writing $\|\mu\|_{\fla(A)}$ instead of $\|\mu\|_{\fla}$.
We will denote by $\mu_n \flt \mu$ the flat convergence of $\mu_n$ to $\mu$.

%In the language of currents (see, for instance, \cite{F}), the measure $\mu$ can be seen as an integer $0$-current and it can be shown (see, for instance, \cite[Section 4.1.12]{F} or \cite[formula (6.1)]{ADGP}) that 
%\begin{equation}
%\min_{{\partial T=\mu}}|T|=\sup_{\eta\in \mathrm{Lip}^1(A)}\langle\mu,\eta\rangle\le \|\mu\|_{\fla},
%\end{equation}
%where $\mathrm{Lip}^1(A)$ is the set of functions $\eta$ with $\sup_{\newatop{x,y\in A}{y\neq x}}\frac{|\eta(y)-\eta(x)|}{|y-x|}\le 1$ 

It is well-known that the flat norm of $\mu$  is related to integer $1$-currents $T$ with $\partial T=\mu$ and having minimal mass (for the theory and terminology of integer currents we refer the reader to \cite{F}). More precisely, 
by \cite[Section 4.1.12]{F} (see also \cite[formula (6.1)]{ADGP}) there holds
$$
\min_{\partial T=\mu}|T|=\sup_{\eta\in \mathrm{Lip}^1(A)}\langle\mu,\eta\rangle,
$$
where $\mathrm{Lip}^1(A)$ is the set of functions $\eta$ with $\sup_{\newatop{x,y\in A}{y\neq x}}\frac{|\eta(y)-\eta(x)|}{|y-x|}\le 1$ .
Since for every $\eta\in W_0^{1,\infty}(A)$ we have
$$
\sup_{\newatop{x,y\in A}{y\neq x}}\frac{|\eta(y)-\eta(x)|}{|y-x|}\le\|\eta\|_{W^{1,\infty}_0(A)}\le (1+\diam(\Omega))\sup_{\newatop{x,y\in A}{y\neq x}}\frac{|\eta(y)-\eta(x)|}{|y-x|},
$$
we deduce that
\begin{equation}\label{minimalconn}
\min_{\partial T=\mu}|T|\ge \|\mu\|_{\fla}\ge \frac{1}{1+\diam(\Omega)}\min_{\partial T=\mu}|T|.
\end{equation}

%%%%%%%%%%%%%%%%%%%%%%%%%%%%%%%%%%%%%%%%%%%%%
\subsection{The discrete energy} \label{discreteEnergies}

Here we will introduce a class of energy functionals defined on $\mathcal{AF}_{\vep}(\Omega)$. Let  $f \colon \R\to\R^+$ be a continuous ${2\pi}$-periodic function such that

\begin{itemize}
	\item[(i)] $f(t)=0$ if and only if $t\in {2\pi}\Z$;
	\item[(ii)] $f(t)\geq1-\cos t$ for any $t\in\R$;
	\item[(iii)] $f(t)=\frac{t^2}{2}+O(t^3)$ as $t\to 0$.
\end{itemize}
and set $f^{\nn}(t):=f(\n\,t )$. For any fixed $\vep>0$, we consider the  pairwise interaction potentials $f^{\nn}_\vep:\R\to\R^+$ defined by
\begin{equation}\label{potential}
f_\ep^{\nn}(t):= f^{\nn}(t)\vee \vep\sum_{k\in\Z}\chi_{[(\frac \pi \n+2k\pi,-\frac \pi \n+(2k+2)\pi]}(t).
\end{equation}
We stress  that the  analysis done in this paper might be extended with minor changes to a larger class of potentials such as the smooth ones considered in \cite{K}.
% version $f_\ep^{\nn}$
%for instance, $f_\ep^{\nn}(t):= f^{\nn}(t)+ \vep\sum_{k\in\Z}\chi_{[(\frac \pi \n+2k\pi,-\frac \pi \n+(2k+2)\pi]}$, with minor changes.
%\textcolor{red}{discuss these two kinds of potentials in the introduction}
%A more general condition is possible here.

For any $\vph\in\mathcal{AF}_{\vep}(\Omega)$, we define
\begin{equation}\label{energy}
	\F^{\nn}_\vep(\vph) := \frac 1 2\sum_{(i,j)\in\Omega_{\vep}^1} f^{\nn}_\vep(\vph(j)-\vph(i)).
\end{equation}

For the convenience of the reader, we now introduce two additional energy functionals, which will be useful in the proof of our main result. We define
\begin{eqnarray}\label{effenergy}
	&&\F^{\eff}_\vep(\vth):=\frac 1 2\sum_{(i,j)\in\Omega_{\vep}^1}f(\vth(j)-\vth(i)),\quad\textrm{for any }\vth\in\mathcal{AF}_{\vep}(\Omega),\\ 
\label{XY}
	&&XY_\vep(v):=\frac 1 4\sum_{(i,j)\in\Omega_{\vep}^1}|v(j)-v(i)|^2,\quad\textrm{for any } v\in\mathcal{AXY}_\ep(\Omega).
\end{eqnarray}
Let  $\vph\in \AF$ and set $\vth_\ffi:=\n\,\ffi$, $u_\ffi:=e^{i \ffi}$ and $v_\ffi:=e^{i\vth_\ffi}=u_\ffi^\n$.
By (ii), we have that
\begin{equation}\label{comparxy}
	\F^{\nn}_\vep(\vph)\geq\F^{\eff}_\vep(\vth)\geq \frac 1 2\sum_{(i,j)\in\Omega_{\vep}^1}(1-\cos(\vth_\ffi(j)-\vth_\ffi(i)))=XY_\vep(v_\ffi).
\end{equation}

In the following we will consider also the localized version of the energy $\F^{\nn}_\vep(\vph)$, defined for any set $D\subset\Omega$ by
$$
	\F^{\nn}_\vep(\vph,{D}) :=\frac 1 2 \sum_{(i,j)\in {D}_{\vep}^1} f^{\nn}_\vep(\vph(j)-\vph(i)).
$$
%where ${D}$ denotes the set of interior points of $D$.
The localized versions of the energies  $\F_\ep^\eff(\cdot)$ and $XY_\ep(\cdot)$ on a set $D\subset\Omega$ are analogously defined and are denoted by  $\F_\ep^\eff(\cdot,D)$ and  $XY_\ep(\cdot,D)$, respectively.

%%%%%%%%%%%%%%%%%%%%%%%%%%%%%%%%%%%%%%%%%%%%%%%%
\subsection{Extensions of the discrete functions}\label{extdiscrfun}
Here we introduce the extensions of our discrete order parameters to the whole domain.
 
In order to extend the discrete functions from $\Omega_\ep^0$ to the whole $\Omega_\ep$, we will use two types of interpolations depending on  whether the function is scalar or vector valued. To this purpose, we consider the triangulation defined by the following sets:
\begin{equation}\label{triang}
	\begin{split}
		&\{T_i^-\}_{i\in\Omega_\vep^2}:=\{\co(i,i+\vep\,e_1,i+\vep\,e_1+\vep\,e_2)\}_{i\in\Omega_\vep^2},\\
		&\{T_i^+\}_{i\in\Omega_\vep^2}:=\{\co(i,i+\vep\,e_1+\vep\,e_2,i+\vep\,e_2)\}_{i\in\Omega_\vep^2},
	\end{split}
\end{equation}
where, for any $i,j,k\in\R^2$, $\co(i,j,k)$ denotes their convex envelope.
Let  
%$\psi\in\AF$ and 
$w\in\mathcal{AXY}_\ep(\Omega)$.
According  to \eqref{triang}, we define the piecewise affine interpolations of $w$ in an as follows:
for any $i=(i_1,i_2)\in\Omega_\ep^2$ and for any $x=(x_1,x_2)\in i+\ep Q$, we set

\begin{equation*}
A(w)(x)\!:=\!\left\{\begin{array}{l}
\!\!\!\! w(i)\!+\!\frac{\ud w(i,i+\ep e_1)}{\ep}(x_1-i_1)\!+\!\frac{\ud w(i+\ep e_1,i+\ep e_1+\ep e_2)}\ep(x_2-i_2),\,\textrm{if }x\in {T}_i^-,\\[1mm]
\!\!\!\! w(i)\!+\!\frac{\ud w(i+\ep e_2,i+\ep e_1+\ep e_2)}{\ep}(x_1-i_1)\!+\!\frac{\ud w(i,i+\ep e_2)}{\ep}(x_2-i_2),\,\textrm{if }x\in {T}_i^+.
\end{array}\right.
\end{equation*}
One can easily check that, for any open set $D\subset\Omega$ and for any $w\in\mathcal{AXY}_\ep(\Omega)$,
\begin{equation}\label{XYDir}
XY_\ep(w,D)\ge\frac 1 2\int_{D_\ep}|\nabla A(w)|^2\ud x.
\end{equation}

%For any $\vph \in \AF$, we set
%\begin{equation}\label{theta}
%	\vth_\vph:=n\vph\quad\textrm{and}\quad v_{\ffi}:=e^{i\vth_\ffi};
%\end{equation}
% we denote by  $\mu(\vth_\vph)$ the discrete vorticity measure of $\vth_\vph$ according to \eqref{defvor}. 

Let $\ffi\in\AF$.
We say that $(i,j)\in\Omega_\vep^1$ is a jump pair of $\vph$, if $\mathrm{dist}(\ud\vph(i,j),2\pi\Z)>\frac\pi \n$ and we denote by $JP_\vph$ the  set of jump pairs of $\vph$.
Furthermore we say that $i+\vep\,Q$ ($i\in\Omega_\vep^2$) is a jump cell,
if  $(j,k)$ is a jump pair of $\ffi$ for some bond 
$$
(j,k)\in\{(i,i+\ep e_1),(i+\ep e_1,i+\ep e_1+\ep e_2),(i+\ep e_2,i+\ep e_1+\ep e_2), (i,i+\ep e_2)\}.
$$
 We denote by $JC_{\vph}$ the set of jump cells of $\vph$.
%$\partial(i+\vep\,Q)$ contains at least a jump pair and

%We extend  $\ffi$ to  $\Omega_\ep$ by setting
%\begin{equation}\label{paiofffi}
%	\hat\vph(x) :=
%	\begin{cases}
%		\vph(i)& \text{if }x \in i+\vep \overset{\circ}{Q} \text{ with } i + \vep Q\in JC_{\vph},\\
%		{A^\el(\vph)}&\text{otherwise};
%	\end{cases}
%\end{equation}
%
Now, recalling that $u_\ffi =e^{i\ffi}$, we set  
\begin{equation}\label{paiofu}
	\hat u_\vph(x) :=
	\begin{cases}
	          u_\vph(i)& \text{if }x \in i+\vep \overset{\circ}{Q} \text{ with } i + \vep Q\in JC_{\vph},\\
		{A(u_\ffi)}&\text{otherwise in }\Omega_\ep.
	\end{cases}
\end{equation}
Finally, recalling that $v_\ffi =e^{i \n \ffi}$, we set $\hat v_\ffi:= A(v_\ffi)$.   
With a little abuse of notations we identify the functions $\hat u_\ffi$ and $\hat v_\ffi$ with $L^1$ functions defined on $\Omega$, just by extending them to $0$ in $\Omega\setminus\Omega_\ep$. 
% \textcolor{red}{See figure ... for a simple example in the case $n=2$
%higlighting most of the relevant notions of the last two subsections.}

%%%%%%%%%%%%%%%%%%%%%%%%%%%%%%%%%%%%%%

\section{The main result} 

In this section we state  our $\Gamma$-convergence result for the energies $\F_\vep^{\nn}$ defined in \eqref{energy}. To this purpose, we precisely define its $\Gamma$-limit $\F_0^{\nn}$.
As mentioned in the Introduction \eqref{Gammalim0}, the functional $\F_0^{\nn}$ is given by the sum of  three terms: the {\it renormalized energy} $\W$, representing the energy far from the limit singularities, the core energy $\gamma$, and the anisotropic surface term measuring the length of the string defects.

\subsection{The $\Gamma$-limit}
%The $\Gamma$-limit of the energies $\F_\vep^{\nn}$ will be given by the sum of three terms: the {\it renormalized energy} $\W$, representing the energy far from the limit singularity of $\mu(\vth(\ffi_\ep))$, a finite core energy $\gamma$ representing the energy of each limit singularity, and a surface energy measuring the chirality transition.
%We start by defining the ``far-field energy'' $\W$.
We start by defining the renormalized energy $\W$.
Fix $M\in\N$, we set
\begin{multline}\label{Dn}
\mathcal{D}_M :=\big\{v\in W^{1,1}(\Omega;\mathcal{S}^1)\,:\,Jv 
=\pi\sum_{i=1}^{M} d_i\delta_{x_i} \text{ for some }  d_{i}\in\{-1,1\}, \, x_i \in\Omega\\
\text{ with }		 x_i\neq x_j \text{ for } i\neq j, \text{ and }
v \in H^1_\loc(\Omega \setminus(\supp Jv);\mathcal{S}^1)\big\},
\end{multline}
where $Jv$ denotes the distributional Jacobian of $v=(v_1,v_2)$ (see for instance \cite{JS}), defined by
$$
Jv=\textrm{div}(v_1\partial_{x_2}v_2,-v_1\partial_{x_1}v_2).
$$

As shown in \cite{ADGP}, if $v\in\mathcal D_M$ with $Jv:=\pi\sum_{i=1}^Md_i\delta_{x_i}$, then the function
\begin{equation}\label{defw}
w(\sigma,\Omega):= \frac 1 2 \int_{\Omega\setminus\bigcup_{i=1}^M B_\sigma(x_i)}|\nabla v|^2 \ud x-M\pi|\log\sigma|
\end{equation}
is monotonically decreasing with respect to $\sigma$. Therefore, it is well defined the functional $\mathcal{W}: \D_M \to \overline{\R}$ given by
\begin{equation}\label{renormalizedEnergy}
	\W(v):= \lim_{\sigma\to 0} w(\sigma,\Omega).
\end{equation}
%\begin{equation}\label{renormalizedEnergy}
%	\W(v)=\left\{
%	\begin{array}{ll}
%		\displaystyle \lim_{\sigma\to 0} w(\sigma,\Omega)\quad & \textrm{if } v \in \mathcal D_M;\\
%		-\infty  & \text{if } v \in \mathcal D_N \text{ for some } N<M;\\
%+\infty & \textrm{otherwise.}
%	\end{array}\right.
%\end{equation}
%\begin{equation}\label{renormalizedEnergy}
%	\W(v)=\left\{
%	\begin{array}{l}
%		\displaystyle \lim_{\sigma\to 0}\frac 1 2\int_{\Omega\setminus\cup_{i=1} ^M B_{\sigma}(x_i)}|\nabla v|^2\diff x-M\pi|\log\sigma|\quad \textrm{if } v \in \mathcal D_M;\\
%		-\infty  \qquad \qquad \qquad\qquad \text{if } v \in \mathcal D_N \text{ for some } N<M;\\
%+\infty \qquad \qquad \qquad\qquad \textrm{otherwise.}
%	\end{array}\right.
%\end{equation}
We introduce also the localized version of the energy $\W$. For any open $D\subseteq\Omega$ with $\supp Jv\subset D$, we consider $w(v,D)$ defined as in \eqref{defw} with $\Omega$ replaced by $D$, and set
\begin{equation}\label{Wlocal}
	\W(v,D):=\lim_{\sigma\to 0} w(\sigma,D).
\end{equation}

\begin{remark}
Set
\begin{multline}\label{newD}
\bar{\mathcal{D}}_M :=\big\{v \in H^1_\loc(\Omega \setminus(\bigcup_{i=1}^M
\{x_i\});\mathcal{S}^1) \text{ for some }  x_i \in\Omega\\
 \text{ with } x_i\neq x_j \text{ for } i\neq j,
\text{ and }\deg(v,\partial B_\sigma(x_i))=\pm 1\\
\textrm{ for any }\sigma<\min_{i\neq j}\{\textstyle \frac 1 2 |x_i-x_j|, \di(x_i,\partial\Omega)\}
\big\},
\end{multline}
where $\deg(v,\partial B_\sigma(x_i))$ denotes the degree of $v$ on $\partial B_\sigma(x_i)$ (see for instance \cite{BN,AP}).
As shown in \cite{ADGP}, the renormalized energy $\W(v)$  in \eqref{renormalizedEnergy} is well defined for any $v\in \bar{\mathcal{D}}_M$. Moreover, by \cite[Remark 4.4]{ADGP},  if $\W(v)< + \infty$, then the Dirichlet energy of $v$ is uniformly bounded on all diadic annuli around each $x_i$.
By using H\"older inequality (on each diadic annulus) we obtain that $v\in W^{1,1}(\Omega;\mathcal S^1)$. In particular the class of functions in $\bar{\mathcal{D}}_M$ with $\W(v)< + \infty$ coincides with the class of functions in ${\mathcal{D}}_M$ with $\W(v)< + \infty$.
Finally, we notice that if $v\in\bar{\mathcal{D}}_M$ with $\W(v)< +\infty$,
integration by parts easily yields
$$
Jv=\pi\sum_{i=1}^M\deg(v,\partial B_\sigma(x_i))\delta_{x_i}.
$$

%\begin{multline}\label{Dn}
%\bar\mathcal{D}_M :=\big\{v\in L^\infty(\Omega;\mathcal{S}^1)\,:\,v \in H^1_\loc(\Omega \setminus(\bigcup_{i=1}^M\{x_i\});\mathcal{S}^1) \text{ for some }  x_i \in\Omega\text{ with }x_i\neq x_j \text{ for } i\neq j,
%%\\
% %\text{ and }\deg(v,\partial B_\sigma(x_i))=\pm 1\textrm{ for any }\sigma<\min_{i\neq j}\{\frac 1 2 |x_i-x_j|, \di(x_i,%\partial\Omega)\}
%\big\},
%\end{multline}
\end{remark}

%\begin{equation}\label{Wlocal}
%	\W(v,D)=\left\{
%	\begin{array}{ll}
%		\displaystyle \lim_{\sigma\to 0} w(\sigma,D)\quad & \textrm{if } v \in \mathcal D_M;\\
%		-\infty  & \text{if } v \in \mathcal D_N \text{ for some } N<M;\\
%+\infty & \textrm{otherwise.}
%	\end{array}\right.
%\end{equation}

In order to define the ``finite core energy''  $\gamma$, we consider an auxiliary minimization problem. Given $0<\vep < \sigma$, we set
\begin{equation}\label{gammaep}
\gamma(\vep,\sigma):= \min_{\vph \in \mathcal{AF}_\vep(B_\sigma)} \left\{\F^{\eff}_\vep(\vth,B_\sigma): \vth(\cdot) = \theta(\cdot) \text{ on } \partial_\vep B_\sigma\right\},
\end{equation}
where $\F_\vep^{\eff}$ is defined in \eqref{effenergy} and  $\theta (x)$ denotes the angular polar coordinate for any $x\in\R^2\setminus\{0\}$.
Moreover, we set
\begin{equation}\label{gamma}
	\gamma:=\lim_{\vep\to 0}\gamma(\vep,\sigma)-\pi\big|\log \frac\vep\sigma\big|.
\end{equation}
By \cite[Theorem 4.1]{ADGP} the above limit exists, is finite and does not depend on $\sigma$.

Finally, for any $M\in\N$, we define the domain of the $\Gamma$-limit as
\begin{equation}\label{barDn}
	\begin{split}
		\mathcal{D}^{\n}_M := &\left\{u\in SBV(\Omega;\mathcal{S}^1)\,: \, u^\n \in\mathcal D_M,\, u \in SBV^2_\loc(\Omega \setminus (\supp Ju^\n);\mathcal S^1),\right.\\
		&\left.\qquad\qquad\qquad\qquad\qquad\qquad\qquad\qquad\qquad \qquad\mathcal H^1(S_u) < +\infty\right\},
	\end{split}
\end{equation}
and for any $u\in \mathcal D_M^\n$ we set
\begin{equation}\label{Gammalim}
\F^{\nn}_0(u):= \W(u^\n)+M\gamma+\int_{S_u}|\nu_u|_1\ud \mathcal{H}^1,
\end{equation}
where, for any $\nu=(\nu_1,\nu_2)\in\R^2$,  $|\nu|_1:=|\nu_1| +|\nu_2|$ denotes the $1$-norm of $\nu$.
\subsection{The $\Gamma$-convergence result}
We are now in a position to state our $\Gamma$-convergence result.
We recall that the assumptions on the interaction potentials $f^{\nn}_\vep$ are listed in Subsection \ref{discreteEnergies}.
Moreover, given $\vph\in\mathcal{AF}_\vep(\Omega)$ we recall that $\vth_\ffi:=\n\,\ffi$ and $v_\ffi:=e^{i\vth_\ffi}$.
In order to ease the notations, for any function $\vph_\vep\in\mathcal{AF}_\vep(\Omega)$,
we will denote the fields  $\vth_{\vph_\vep},\, u_{\vph_\vep},\, \hat u_{\vph_\vep},\,  v_{\vph_\vep}, \, \hat v_{\vph_\vep}$ 
(introduced in Subsection \ref{extdiscrfun}) 
by $\vth_\ep,\, u_\vep,\, \hat u_\ep,\,  v_\vep, \, \hat v_\vep$, respectively;
for the same reason, we will denote by $\mu_\vep$ the discrete vorticity measure $\mu(\vth_\vep)$.

\begin{theorem}\label{mainthm}
Let $\n,M\in\N$ be fixed. The following $\Gamma$-convergence result holds true.

\medskip
\noindent (i) (Compactness) Let $\{\vph_\vep\} \subset \mathcal{AF}_\varepsilon(\Omega)$ be a sequence satisfying
\begin{equation} \label{energyBound}
	\F^{\nn}_\vep(\vph_\vep) \leq M \pi |\log\vep|+C,
\end{equation}
for some $C\in\R$.
Then, up to a subsequence, 
%$\mu_\vep \flt \mu$ for some 
%\begin{equation}\label{comp}
% \mu = \sum_{i = 1}^N d_i \delta_{x_i} \text{ with } N\in\N,\, d_i \in \Z \setminus \{0\}, \,x_i \in \Omega, \, x_i\neq x_j \text{ for } i\neq j, \, \sum_{i=1}^N|d_i|\leq M. 
%\end{equation}
{$\mu_\vep \flt \mu = \pi \sum_{i = 1}^N d_i \delta_{x_i}$ with  $N\in\N$, $d_i \in \Z \setminus \{0\}$, $x_i \in \Omega$,  $x_i\neq x_j$ for $i\neq j$
 and $\sum_{i=1}^N|d_i|\leq M$. }

Moreover, if $\sum_i|d_i| = M$, then $|d_i| = 1$ and there exists $u \in \DD$ with $Ju^\n=\mu$ such that,
up to a further subsequence, $\hat{u}_\vep \weakly u$  in $SBV_\loc^2(\Omega\setminus \cup_{i=1}^M\{x_i\};\R^2)$. 

\medskip 
\noindent (ii) ($\Gamma$-liminf inequality) Let $u \in\DD$ and
let $\{\vph_\vep\}\subset\mathcal{AF}_\vep(\Omega)$ be such that $\mu_\vep\flt J u^\n$
and $\hat u_\vep\to u$ in $L^1(\Omega;\R^2)$.
Then,
\begin{equation}\label{gammaliminf}
	\liminf_{\vep \to 0} \F^{\nn}_\vep(\vph_\vep) - M \pi |\log\vep| \geq \F^{\nn}_0(u).
\end{equation}

\medskip 
\noindent (iii) ($\Gamma$-limsup inequality) Given $u\in \DD$, there exists $\{\vph_\vep\}\subset\mathcal{AF}_\vep(\Omega)$
such that $\mu_\vep\flt J u^\n$, $\hat u_\vep\weakly u$ in $SBV^2_\loc(\Omega\setminus\bigcup_{i=1}^M\{x_i\};\R^2)$ and
\begin{equation}\label{gammalimsup}
	\lim_{\vep \to 0} \F^{\nn}_\vep(\vph_\vep) - M \pi |\log\vep| = \F^{\nn}_0(u).
\end{equation}
\end{theorem}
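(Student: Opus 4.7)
I would first extract the vortex structure from the doubled field $v_\ep=u_\ep^\n$ and then upgrade to $SBV^2_\loc$-compactness of $\hat u_\ep$. By \eqref{comparxy}, the bound \eqref{energyBound} gives $XY_\ep(v_\ep)\le M\pi|\log\ep|+C$, and the $XY$ compactness results of \cite{AC,ADGP} yield, up to subsequences, $\mu_\ep\flt\mu=\pi\sum_{i=1}^N d_i\delta_{x_i}$ with distinct $x_i\in\Omega$, $d_i\in\Z\setminus\{0\}$, and $\sum|d_i|\le M$. In the saturated case $\sum|d_i|=M$, necessarily $|d_i|=1$ and $\hat v_\ep\weakly v$ in $H^1_\loc(\Omega\setminus\bigcup\{x_i\};\R^2)$, with $v\in\D_M$ and $Jv=\mu$. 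For $\hat u_\ep$, fix $\rho>0$ and set $\Omega'_\rho:=\Omega\setminus\bigcup_i\overline{B}_\rho(x_i)$. The ball lower bound of \cite{ADGP} applied via $\F^\eff_\ep\le\F^{\nn}_\ep$ gives $\F^{\nn}_\ep(\ffi_\ep,B_\rho(x_i))\ge\pi|\log(\rho/\ep)|-C$, whence $\F^{\nn}_\ep(\ffi_\ep,\Omega'_\rho)\le C(\rho)$. Since each jump pair costs at least $\ep$, the number of jump cells in $\Omega'_\rho$ is $O(1/\ep)$ and $\mathcal H^1(S_{\hat u_\ep}\cap\Omega'_\rho)\le C(\rho)$; on elastic bonds $|u_\ep(j)-u_\ep(i)|^2\le C f^{\nn}_\ep(\ud\ffi(i,j))$ by assumption (iii), so $\|\nabla\hat u_\ep\|_{L^2(\Omega'_\rho)}^2\le C(\rho)$. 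Theorem \ref{SBVComp} plus a diagonal argument in $\rho\to 0$ then yields $u\in\DD$ with $\hat u_\ep\weakly u$ in $SBV^2_\loc(\Omega\setminus\bigcup\{x_i\};\R^2)$, and the identification $u^\n=v$ follows by passing to the $L^1$-limit.

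\textbf{Part (ii) — $\Gamma$-liminf.} I would localize $\F^{\nn}_\ep(\ffi_\ep)=\sum_i\F^{\nn}_\ep(\ffi_\ep,B_\rho(x_i))+\F^{\nn}_\ep(\ffi_\ep,\Omega'_\rho)$. Each ball contributes at least $\pi|\log(\rho/\ep)|+\gamma+o(1)$ by the renormalized energy lower bound of \cite{ADGP}. On $\Omega'_\rho$ the crucial estimate is
\begin{equation*}
\F^{\nn}_\ep(\ffi_\ep,\Omega'_\rho)\ge XY_\ep(v_\ep,\Omega'_\rho)+\ep\cdot\sharp\{\text{jump pairs in }\Omega'_\rho\}-o(1),
\end{equation*}
obtained by writing $f^{\nn}_\ep=(1-\cos(\n\,\cdot))+[f^{\nn}_\ep-(1-\cos(\n\,\cdot))]$: by (iii) the remainder is $O(t^3)$ near the principal well, while on jump bonds it is $\ge\ep-o(1)$ because $\n\,\ud\ffi$ is close to a nonzero multiple of $2\pi$. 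Weak-$L^2$ lower semicontinuity of $\int|\nabla\hat v_\ep|^2$ (combined with \eqref{XYDir} and $\hat v_\ep\weakly v$) gives $\liminf XY_\ep(v_\ep,\Omega'_\rho)\ge\tfrac12\int_{\Omega'_\rho}|\nabla u^\n|^2\,\ud x$, while Theorem \ref{lscSBV} with $\Theta\equiv 1$ and $\Psi(\cdot)=|\cdot|_1$ produces $\liminf\ep\cdot\sharp\{\text{jump pairs}\}\ge\int_{S_u\cap\Omega'_\rho}|\nu_u|_1\,\ud\mathcal H^1$, since a jump pair aligned with $e_k$ generates a jump segment of $\hat u_\ep$ of length $\ep$ normal to $e_k$. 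Summing and sending $\rho\to 0$, using that $\tfrac12\int_{\Omega'_\rho}|\nabla u^\n|^2\,\ud x-M\pi|\log\rho|\to\W(u^\n)$ by \eqref{renormalizedEnergy}, delivers \eqref{gammaliminf}.

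\textbf{Part (iii) — $\Gamma$-limsup.} By Theorem \ref{polyapprox} and a diagonal argument I may reduce to the case $u\in\DD$ with polyhedral $S_u$. Then I would build $\ffi_\ep$ by combining three pieces on disjoint regions: (a) on each small ball $B_\rho(x_i)$ use the canonical $XY$-vortex recovery sequence of \cite{ADGP} realizing $\pi|\log\ep|+\gamma+o(1)$ with polar-angle boundary datum; (b) on the bulk region $\Omega'_{2\rho}$ away from an $\ep$-tube around $S_u$, define $\ffi_\ep$ at lattice sites as a smooth $\n$-th root of $u^\n$ (available since $u^\n\in H^1_\loc$ on each connected component of $\Omega\setminus(\{x_i\}\cup S_u)$); (c) across a discretization of $S_u$, insert jump pairs where $\ud\ffi$ takes the value $2\pi k/\n$ prescribed by the jump of $u$, each costing exactly $\ep$. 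A bond-counting argument by orientation produces $\int_{S_u}|\nu_u|_1\,\ud\mathcal H^1+o(1)$, the bulk piece contributes $\W(u^\n)+o_\rho(1)$, the vortex pieces give $M(\pi|\log\ep|+\gamma)$, and the thin gluing annuli carry $o(1)$ energy, yielding \eqref{gammalimsup}.

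\textbf{Main obstacle.} The genuinely new difficulty beyond the classical $XY$ analysis is the clean split of $\F^{\nn}_\ep$ into a (bulk+vortex) part producing $M\pi|\log\ep|+M\gamma+\W(u^\n)$ and a surface part producing $\int|\nu_u|_1\,\ud\mathcal H^1$, without double counting near the cores from which the strings emanate. Converting the discrete count of jump pairs into the anisotropic surface integral requires both the $SBV^2_\loc$-compactness of $\hat u_\ep$ (absent in \cite{ADGP}) and a careful invocation of Theorem \ref{lscSBV} with the $\ell^1$-density, and in the construction the dual challenge is matching the $\n$-th-root bulk interpolation to the vortex and jump pieces without creating spurious jump bonds in the gluing layers.
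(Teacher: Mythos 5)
Your overall architecture matches the paper's (comparison with the classical $XY$ analysis via \eqref{comparxy} for the vortex part, extraction of the surface term from the $\ep$-wells, $SBV$ lower semicontinuity and a polyhedral density theorem), but two steps in your $\Gamma$-liminf argument fail as written. First, the ``crucial estimate'' $\F^{\nn}_\ep(\ffi_\ep,\Omega'_\rho)\ge XY_\ep(v_\ep,\Omega'_\rho)+\ep\,\sharp\{\text{jump pairs}\}-o(1)$ is false: by \eqref{potential}, $f^{\nn}_\ep$ is a maximum $f^{\nn}\vee\ep\chi$, not a sum, so on each jump bond you lose $\min\big(1-\cos(\n\,\ud\ffi_\ep(i,j)),\ep\big)$ relative to your right-hand side, and summed over the $O(1/\ep)$ jump bonds this loss is $O(1)$, not $o(1)$ (take $f=1-\cos$ and bonds with $1-\cos(\n\,\ud\ffi_\ep(i,j))\ge\ep$). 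The paper avoids this double counting by harvesting the Dirichlet energy only outside a $\delta$-tube around $S_u$ and the surface energy only inside it, using that $\int_{\mathrm{tube}}|\nabla u^\n|^2\,\ud x\to0$ as $\delta\to0$. Second, converting $\ep\,\sharp\{\text{jump pairs}\}$ into $\int_{S_{\hat u_\ep}}|\nu_{\hat u_\ep}|_1\,\ud\mathcal H^1$ with constant exactly $1$ is the crux, and your one-line justification overcounts: with the interpolation \eqref{paiofu} a single jump pair turns both adjacent cells into jump cells, on which $\hat u_\ep$ is constant, so it can generate a jump set of $\hat u_\ep$ of length several times $\ep$ (up to $8\ep$), not one dual segment of length $\ep$. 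The paper handles this with the weight $\Theta_t$ (equal to $t$ on small jumps and $1$ on jumps of size close to $2\sin(l\pi/\n)$) together with the injective shift map $\mathcal J$, which assigns to each genuine jump segment of $\hat u_\ep$ a distinct bond costing at least $\ep$; without some such device you only obtain the surface term up to a multiplicative constant. Relatedly, in part (i) your bound $\mathcal H^1(S_{\hat u_\ep}\cap\Omega'_\rho)\le C(\rho)$ degenerates like $|\log\rho|$ as $\rho\to0$ and does not give $\mathcal H^1(S_u)<+\infty$, which is required for $u\in\DD$; the paper obtains a $\rho$-uniform bound by first subtracting from the energy available on $\Omega'_\rho$ the bulk Dirichlet contribution, which accounts for $\W(u^\n)+M\pi|\log\rho|$.

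In part (iii) two further ingredients are missing. You assert that the core balls can be filled by the classical recovery sequence of \cite{ADGP} realizing $\pi|\log\ep|+\gamma+o(1)$, but the admissible fields here are phases $\ffi$ penalized by $f^{\nn}_\ep$ with the constraint $e^{i\n\ffi}=e^{id\theta}$ on the boundary; taking an $\n$-th root of the optimal classical profile $\vth_{\ep,\sigma}$ necessarily creates jump bonds along a set joining its microscale vortices, and one must show that the resulting extra cost is asymptotically negligible. This is exactly Lemma \ref{corelemma}, which uses the flat convergence of $\mu(\vth_{\ep,\sigma})$ and a minimal connection of vanishing mass to confine that string; without it you only get some constant $\tilde\gamma\ge\gamma$ and the limsup does not match the liminf. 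Similarly, the reduction ``by Theorem \ref{polyapprox} to polyhedral $S_u$'' is not directly available: that theorem applies to $SBV$ functions with values in a finite set, whereas $u$ is $\mathcal S^1$-valued and the approximation must preserve both $u^\n\in H^1_\loc$ and the vortex structure. The paper's Lemma \ref{bulklemma} first constructs a lifting $\ffi=\psi+w$ with $w\in H^1$ off cuts $\Gamma_i$ and $\psi$ valued in $\frac{2\pi}{\n}\Z$ modulo $2\pi$, subject to the compatibility $[\psi]+[w]\in2\pi\Z$ across the $\Gamma_i$, and only then approximates $\psi$ polyhedrally (with a reflection construction near the cuts); some such decomposition is unavoidable in your steps (b)--(c) as well.
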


A similar $\Gamma$-convergence result for the functional $\F_\ep^\eff$ has been proved in \cite{ADGP}.
For our purposes it is convenient to present here its precise statement using our notations.

\begin{theorem}\label{gammal2}

Let $M\in\N$ be fixed. 
Let $\{\vth_\vep\}\subset\mathcal{AF}_{\vep}(\Omega)$ be such that
$\F^\eff_{\vep}
(\vth_\vep) \leq M\pi|\log\vep|+C$, for some $C\in\R$. 
Then, up to a subsequence, $\mu_\vep\flt\mu=\pi \sum_{i=1}^N d_i\delta_{x_i}$ with $N\in\N$, $d_i\in\Z\setminus\{0\}$, 
$x_i\in\Omega$, $x_i\neq x_j$ for $i\neq j$ and $\sum_{i=1}^{N}|d_i| \leq M$; moreover, there exists a constant  $\bar C\in\R$ such that for any $i=1,\ldots,M$ 
and for any $\sigma<\frac 1 2\dist(x_i,\partial\Omega\cup\bigcup_{j\neq i} x_j)$, there holds
\begin{equation}\label{liminfloc}
	\liminf_{\vep\to 0}\F^\eff_\vep(\vth_\vep,B_{\sigma}(x_i))-\pi|d_i|\log\frac\sigma\vep\geq\bar C.
\end{equation}
In particular,
\begin{equation}\label{liminfomega}
	\liminf_{\vep\to 0}\F^\eff_\vep(\vth_\vep)-\pi|\mu|(\Omega)\log\frac\sigma\vep\geq M\bar C.
\end{equation}
Furthermore, if $\sum_{i=1}^{N}|d_i|=M$, then
\begin{itemize}
	\item[(a)]  $|d_i| = 1$ for any $i=1,\ldots,M$;
	\item[(b)] up to a further subsequence, $\hat v_\vep {\rightharpoonup}v$ in $H^1_{\loc}(\Omega\setminus(\supp \mu);\R^2)$ for some $v \in\D_M$ with $Jv= \mu$;
	\item[(c)] $\displaystyle \liminf_{\vep\to 0}\F^\eff_{\vep}(\vth_\vep,D)-M\pi|\log\vep|\geq \W(v,D)+M\gamma$, for any open $D\subseteq\Omega$ with $\supp Jv\subset D$.
\end{itemize}
\end{theorem}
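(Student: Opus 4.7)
The statement is a restatement, in the notation of the present paper, of the $\Gamma$-convergence analysis of the classical $XY$ model carried out in \cite{ADGP} for the potential $1-\cos t$. Since our $f$ satisfies $f(t)\ge 1-\cos t$ and $f(t)=t^2/2+O(t^3)$ near $0$, the plan is to transfer the arguments of \cite{ADGP} to $\F^\eff_\ep$ with only bookkeeping changes.

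\textbf{Step 1 (Reduction to a Ginzburg--Landau bound).} By hypothesis (ii) and \eqref{XYDir} one has
\[
\F^\eff_\ep(\vth_\ep)\ \ge\ XY_\ep(v_\ep)\ \ge\ \tfrac{1}{2}\int_{\Omega_\ep}|\nabla \hat v_\ep|^2\,\ud x,
\]
so the energy hypothesis forces a uniform Ginzburg--Landau-type bound on $\hat v_\ep$. The discrete vorticity $\mu_\ep$ differs from the Jacobian $J\hat v_\ep$ by an error vanishing in the flat norm (see \cite[Lemma 3.3]{ADGP}). Thus, by the flat compactness of Jacobians for fields with GL-bounded energy (Jerrard--Soner, Alicandro--Ponsiglione), up to a subsequence $\mu_\ep\flt\mu=\pi\sum_{i=1}^{N}d_i\delta_{x_i}$ with $d_i\in\Z\setminus\{0\}$ and $x_i\in\Omega$.

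\textbf{Step 2 (Local lower bound via discrete ball construction and \eqref{liminfloc}--\eqref{liminfomega}).} I would then invoke the discrete Jerrard--Sandier ball construction developed in \cite{AC, ACP, ADGP}: starting from the seeds given by the support of $\mu_\ep$, one merges and expands balls, keeping at each stage the lower bound
\[
\F^\eff_\ep\bigl(\vth_\ep,B\bigr)\ \ge\ \pi\,|\mathrm{deg}(v_\ep,\partial B)|\,\log\frac{\mathrm{rad}(B)}{\ep}-C
\]
on every ball $B$ produced by the construction; this uses the quadratic expansion (iii) together with the comparison $f\ge 1-\cos$ on a suitable annular scale. Restricting the construction inside $B_\sigma(x_i)$ yields \eqref{liminfloc}, and summing over $i$ together with the nonnegativity of $\F^\eff_\ep$ on the complement gives the inequalities $\sum_i|d_i|\le M$ and \eqref{liminfomega}. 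The constant $\bar C$ absorbs the $O(1)$ error coming from the initial seed scale and from passing from $|d|\log(\cdot)$ to $d^2\log(\cdot)$ in the ball construction.

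\textbf{Step 3 (Saturation case: items (a) and (b)).} The sharper form of the ball construction gives an annular bound of order $\pi d_i^2\log(\sigma/\ep)$, not merely $\pi|d_i|\log(\sigma/\ep)$ (here one uses that on a dyadic annulus around $x_i$ the inequality $\tfrac{1}{2}\int|\nabla\hat v_\ep|^2\ge\pi d_i^2\log 2-o(1)$ holds for $\ep$ small, by the standard degree lower bound). Summing yields
\[
\F^\eff_\ep(\vth_\ep)\ \ge\ \pi\sum_{i=1}^{N}d_i^2\log\tfrac{1}{\ep}+O(1);
\]
if $\sum|d_i|=M$, comparison with the hypothesis gives $\sum d_i^2\le\sum|d_i|$, whence $|d_i|=1$ for every $i$, proving (a). Once (a) is in place, the excess Dirichlet energy of $\hat v_\ep$ on $\Omega\setminus\bigcup B_\sigma(x_i)$ is $O(1)$ uniformly in $\ep$, and the usual argument (Fatou on $(1-|\hat v_\ep|^2)$ coming from the quadratic expansion of $1-\cos$ on the piecewise affine interpolant) gives $|v|=1$ a.e. and $\hat v_\ep\weakly v$ in $H^1_\loc(\Omega\setminus\supp\mu;\R^2)$ with $v\in\D_M$ and $Jv=\mu$, yielding (b).

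\textbf{Step 4 (Renormalized energy plus core energy, item (c)).} The key remaining step is to separate the three contributions (logarithmic, core, renormalized). Fix $\sigma$ small. On $D\setminus\bigcup_i\overline{B_\sigma(x_i)}$, weak lower semicontinuity of the Dirichlet integral under weak $H^1_\loc$ convergence of $\hat v_\ep$ gives
\[
\liminf_{\ep\to 0}\F^\eff_\ep\bigl(\vth_\ep,D\setminus\textstyle\bigcup_i\overline{B_\sigma(x_i)}\bigr)\ \ge\ \tfrac{1}{2}\int_{D\setminus\bigcup_i B_\sigma(x_i)}|\nabla v|^2\,\ud x.
\]
On each $B_\sigma(x_i)$, one performs a boundary-matching construction near $\partial B_\sigma(x_i)$ (replacing, via a harmonic cut-off, the trace of $\vth_\ep$ by the polar angle $d_i\theta$ at a vanishing energy cost $o_\sigma(1)$ as $\ep\to 0$), and then applies the definition \eqref{gammaep}--\eqref{gamma} to conclude
\[
\liminf_{\ep\to 0}\F^\eff_\ep(\vth_\ep,B_\sigma(x_i))-\pi\log\tfrac{\sigma}{\ep}\ \ge\ \gamma+o_\sigma(1).
\]
Summing over $i$, sending $\ep\to 0$, and finally letting $\sigma\to 0$ (using the definition \eqref{Wlocal} of $\W(v,D)$) yields (c).

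The main obstacle is Step 4: the separation of the core energy $\gamma$ from the renormalized energy requires a careful two-scale matching across $\partial B_\sigma(x_i)$, which in \cite{ADGP} is performed for the potential $1-\cos t$; in our setting the quadratic behavior (iii) of $f$ near the wells is precisely what makes the same argument go through without substantive modification.
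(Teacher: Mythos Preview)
The paper does not give its own proof of this theorem: it is stated as a quotation, in the present notation, of the $\Gamma$-convergence result for $\F^\eff_\ep$ established in \cite{ADGP}. Your proposal is a faithful high-level reconstruction of the \cite{ADGP} strategy (GL-type bound via $f\ge 1-\cos$, discrete ball construction for the local lower bounds, the $\sum d_i^2\le\sum|d_i|$ argument in the saturation case, and the two-scale core/renormalized splitting), so there is no discrepancy in approach to flag.

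The one point on which the paper adds content beyond \cite{ADGP} is the identification $Jv=\mu$ in (b), which it isolates in a Remark because \cite{ADGP} does not state it explicitly. You pass over this with ``the usual argument''; the paper's actual argument is slightly more specific than your sketch suggests: it uses the fact from \cite{ACP} that $J\hat v_\ep-\mu_\ep\flt 0$, writes $J\hat v_\ep=\operatorname{div}\mathscr J_\ep$ with $\mathscr J_\ep\to\mathscr J$ in $L^2_\loc(\Omega\setminus\{x_1,\dots,x_M\};\R^2)$, and tests against a radial cutoff $\eta_i$ supported in $B_\sigma(x_i)$ to get $d_i=\langle\mu,\eta_i\rangle=-\lim_\ep\int\mathscr J_\ep\cdot\nabla\eta_i=\deg(v,\partial B_\sigma(x_i))$. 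If you want your write-up to be self-contained at the level of the paper, this is the step worth making explicit.
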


\begin{remark} 
Since in \cite{ADGP} it is not explicitly stated that $Jv=\mu$, for the reader's convenience we provide here a short proof of this fact.

As proven in \cite{ACP}, if $\F^\eff_{\vep}(\vth_\vep) \leq C|\log\vep|$, then 
\begin{equation}\label{diffjac}
J\hat v_\ep-\mu_\ep\flt 0\quad\textrm{ as }\ep\to 0.
\end{equation}

Now let $\F^\eff_{\vep}(\vth_\vep) \leq M\pi|\log\vep|+C$, $\mu_\ep\flt\mu=\pi \sum_{i=1}^N d_i\delta_{x_i}$ with $|d_i|=1$ and $x_i\in\Omega$, $x_i\neq x_j$ for $i\neq j$, and   $\hat v_\vep {\rightharpoonup}v$ in $H^1_{\loc}(\Omega\setminus(\supp \mu);\R^2)$ for some $v \in\D_M$. 
Let $\mathscr J:=( v_1\partial_{x_2}v_2,-v_1\partial_{x_1}v_2)$ and let $\mathscr{J}_\ep$ be analogously defined with $v$ replaced by $\hat v_\ep$. Then, $Jv=\mathrm{div} \mathscr{J}$, $J\hat v_\ep=\mathrm{div} \mathscr{J}_\ep$, and $\mathscr{J}_\ep\to\mathscr{J}$ in $L^2_\loc(\Omega\setminus\bigcup_{i=1}^M\{x_i\};\R^2)$.

Let $\sigma$ be as in \eqref{newD} and let $\eta:\R_+\to [0,1]$ be the piecewise affine function such that $\eta=1$ in $[0,\frac\sigma 2]$,  $\eta=0$ in $[\sigma,+\infty)$ and $\eta$ is affine in $[\frac{\sigma}{2},\sigma]$. 
Fix $i\in\{1,\ldots,M\}$ and set $\eta_i(x):=\eta(|x-x_i|)$ for any $x\in\Omega$.
Then, in view of \eqref{diffjac}, we have
\begin{multline*}
d_i=\langle \mu,\eta_i\rangle=\lim_{\ep\to 0}\langle \mu_\ep,\eta_i\rangle=\lim_{\ep\to 0} \int_{B_\sigma(x_i)}J\hat v_\ep \eta_i\ud x
\\
=-\lim_{\ep\to 0}\int_{B_\sigma(x_i)}\mathscr{J}_\ep\cdot \nabla\eta_i\ud x
=-\int_{B_\sigma(x_i) \setminus B_{\frac{\sigma}{2}}(x_i)}\mathscr{J} \cdot \nabla\eta_i\ud x 
=\deg(v,\partial B_\sigma(x_i)),
\end{multline*}
which combined with the fact that $Jv=0$ in $\Omega\setminus\bigcup_{i=1}^M\{x_i\}$, yields $Jv=\mu$.

\end{remark}
%%%%%%%%%%%%%%%%%%%%%%%%%%%%%%%%%%
\section{Proof of Theorem \ref{mainthm}}
This section is devoted to the proof of Theorem \ref{mainthm}. 
In what follows the letter $C$ will denote a constant in $\R$ that may change from line to line;  if the constant $C$ will depend on some parameters $\alpha_1,\ldots,\alpha_k$ we will write $C_{\alpha_1,\ldots,\alpha_k}$.
Moreover, for any $\rho>0$, for any $D\subset\Omega$ open and for any $\nu=\pi\sum_{i=1}^N d_i\delta_{y_i}$, we set
\begin{equation}\label{holedomain}
D_\rho(\nu):=D\setminus\bigcup_{i=1}^N B_\rho(y_i).
\end{equation}

\subsection{Proof of (i): Compactness}

By \eqref{comparxy} and by \eqref{energyBound}, we immediately get
\begin{equation}\label{energyBound2}
\F^\eff_{\vep}(\vth_\vep)\leq M \pi |\log\vep|+C;
\end{equation}
therefore, by Theorem \ref{gammal2}, we have that, up to a subsequence, $\mu_\vep\flt\mu$ for some $\mu$ with all the desired properties.
Let us assume that $\sum_{i=1}^N|d_i|=M$.  By Theorem \ref{gammal2}(a), we get that $|d_i|=1$, and by  Theorem \ref{gammal2}(b), up to passing to a further subsequence, $\hat v_\ep{\rightharpoonup}v$ in $H^1_{\loc}(\Omega\setminus(\supp \mu);\R^2)$ for some $v \in\D_M$ with $Jv=\mu$. 

Now we prove that $\hat{u}_\vep \weakly u$  in $SBV_\loc^2(\Omega\setminus \cup_{i=1}^M\{x_i\};\R^2)$, for some 
 $u \in \DD$ with $u^\n=v$, and hence, in particular, $Ju^\n=\mu$.
Consider  an increasing sequence $\{\Omega^h\}$ of open smooth sets compactly contained in $\Omega$ such that  $\cup_{h\in\N}\Omega^h=\Omega$.
Fix $h\in\N$ (large enough)  and let $\rho>0$ (small enough) be such that the balls $B_{2\rho}(x_i)$ are pairwise disjoint and contained in $\Omega^h$.
We first prove that $\{\hat u_\vep\}$ satisfies the upper bound \eqref{compactnessCondition} in $\Omega^h_\rho(\mu)$, i.e., that
\begin{equation}\label{compCondTrue}
	\sup_{\vep>0}\bigg\{\int_{\Omega^h_{\rho}(\mu)}|\nabla \hat u_\vep|^2\diff x+\mathcal{H}^1(S_{\hat u_\vep}\res \Omega^h_{\rho}(\mu))+\|\hat u_\vep\|_{L^\infty(\Omega^h_{\rho}(\mu);\R^2)}\bigg\}<C_{\rho,h},
\end{equation}
for some constant $C_{\rho,h}$.

Let $\vep>0$ be small enough so that $\Omega^h\subset\Omega_\vep$. 
We preliminarly notice that
\begin{equation}\label{infinitybound}
	|\hat u_\vep|\leq 1\quad \textrm{ in }\Omega^h.
\end{equation}
Moreover, (recalling the notations introduced in Subsection \ref{extdiscrfun}), we set 
\[
J_{\vep}:=\bigcup_{i+\vep Q\in JC_{\vph_\vep}}(i+\vep Q);
\]
it is easy to check, that for any $(i,j)\in\Omega_\vep^1$ with $i,j\notin J_\vep$, it holds
\begin{equation}\label{bounduv}
	\frac{|u_\vep(j)-u_\vep({i})|^2}{|v_\vep(j)-v_\vep(i)|^2}=\frac{1-\cos(\vph_\vep(j)-\vph_\vep(i))}{1-\cos(\n(\vph_\vep(j)-\vph_\vep(i)))}
	\leq C_\n,
\end{equation}
for some $C_\n>0$.
Using that $\hat u_\vep$ is piecewise constant in $J_\vep$, \eqref{comparxy} and \eqref{XYDir},we get
\begin{multline}\label{appgradbound}
	\int_{\Omega^h_\rho(\mu)}|\nabla \hat u_\vep|^2\diff x=\int_{\Omega^h_\rho(\mu)\setminus J_\vep}|\nabla \hat u_\vep|^2\diff x\leq C_\n\int_{\Omega^h_\rho(\mu)}|\nabla \hat v_\vep|^2\diff x\\
	\leq C_\n \, XY_\vep(v_\vep,\Omega_{\frac{\rho}{2}}(\mu))
	\leq C_\n\,\F_\vep^\eff(\vth_\vep, \Omega_{\frac{\rho}{2}}(\mu))\leq C_\n C_{\rho},
\end{multline}
where the last inequality is a consequence of \eqref{energyBound} and \eqref{liminfloc},  and $C_{\rho}>0$.
%is a constant depending on $\rho$.

We now show that
\begin{equation}\label{boundjump}
	\mathcal{H}^1(S_{\hat u_\vep}) \leq C,
\end{equation}
for some constant $C$ independent of $\rho$, $h$ and $\vep$.
First, it is easy to see that 
\begin{equation*}
%\label{corta}
\mathcal H^1(S_{\hat u_\vep})\leq C\vep\sharp JP_{\vph_\vep}\leq C\F_\vep^\n(\vph_\vep,J_\vep);
\end{equation*}
then, 
in order to prove \eqref{boundjump}, it is enough to prove that
$\F_\vep^{\nn}(\vph_\vep,J_\vep \cap \Omega^h_\rho(\mu))$ is uniformly bounded with
respect to $\vep$, $\rho$ and $h$. By \eqref{energyBound}, \eqref{comparxy} and \eqref{liminfloc}, we have
\begin{multline}\label{lunga}
	C\geq \F_\vep^{\nn}(\vph_\vep)-M\pi|\log\vep|\geq\sum_{i=1}^M(\textstyle \F_\vep^\eff(\vth_\vep, B_\rho(x_i))-\pi\log\frac\rho\vep)\\
	+\F_\vep^{\nn}(\vph_\vep,\Omega^h_\rho(\mu))-M\pi|\log\rho|
	\geq \bar C +\F_\vep^{\nn}(\vph_\vep,\Omega^h_\rho(\mu))-M\pi|\log\rho|
	\\
		 \geq \bar C + \F_\vep^\eff(\vph_\vep,\Omega^h_\rho(\mu)\setminus
  J_\vep)+ \F_\vep^{\nn}(\vph_\vep,\Omega^h_\rho(\mu) \cap J_\vep) -M\pi|\log\rho|
  \\ \geq
  \bar C + \frac{1} 2\int_{\Omega^h_{2\rho}(\mu) }|\nabla \hat v_\vep|^2  \chi_{\Omega^h_{2\rho}(\mu)\setminus J_\vep} \diff x-M\pi|\log\rho|
  + \F_\vep^{\nn}(\vph_\vep,\Omega^h_\rho(\mu) \cap J_\vep)
\end{multline}
Since
\begin{equation}\label{massJvep}
	|J_\vep|=\vep^2\sharp JC_{\vph_\vep}\leq 2\vep^2 \sharp JP_{\vph_\vep}\leq C\ep\F_\ep^{\nn}(\ffi_\ep)\leq C\vep|\log\vep|,
\end{equation}
we have 
$\nabla \hat v_\vep \chi_{\Omega^h_{2\rho}(\mu)\setminus J_\vep} \weakly \nabla v$ in $L^2 (\Omega^h_{2\rho}(\mu);\R^2)$.    
 By lower semicontinuity, we obtain
\begin{multline*}
\frac{1} 2\int_{\Omega^h_{2\rho}(\mu)}|\nabla\hat v_\vep|^2 \chi_{\Omega^h_{2\rho}(\mu)\setminus J_\vep}  \diff x-M\pi|\log\rho|
\geq
\\
\frac{1} 2\int_{\Omega^h_{2\rho}(\mu)}|\nabla v|^2\diff x-M\pi|\log(2\rho)|-M\pi\log 2+r(\ep)
\ge\W(v)-M\pi\log 2+r(\ep,\rho,h),
\end{multline*}
where $\lim_{\ep\to 0}r(\ep)=\lim_{h\to\infty}\lim_{\rho\to 0}\lim_{\ep\to 0}r(\ep,\rho,h)=0$. 
This, together with \eqref{lunga} yields $ \F_\vep^{\nn}(\vph_\vep,\Omega^h_\rho(\mu) \cap J_\vep) \le C+ r(\ep,\rho,h)$ and in turn \eqref{boundjump}.

Therefore, by \eqref{infinitybound},\eqref{appgradbound} and \eqref{boundjump}, the bound \eqref{compCondTrue} is satisfied
and by Theorem \ref{SBVComp} there exists a unitary field $u_{h,\rho}$ such that, up to a subsequence, $\hat u_\vep\weakly u_{h,\rho}$ in $SBV^2(\Omega^h_\rho(\mu);\R^2)$.  By a standard diagonal argument, up to a further subsequence, $\hat u_\ep\weakly u$ in $SBV_\loc^2(\Omega\setminus\bigcup_{i=1}^M\{x_i\};\R^2)$, for some $u\in SBV_\loc^2(\Omega\setminus\bigcup_{i=1}^M\{x_i\};\mathcal S^1)$. Moreover, by \eqref{boundjump}, $\mathcal H^1(S_u)$ is finite.

In order to complete the proof it remains to show that  $u^\n = v$. 
To this purpose, we set $w_\vep:=\hat u_\vep^\n$. Let $i\in\Omega_\ep^2$ with $i+\ep Q\subset\Omega_\vep\setminus J_\vep$ and let $x\in T_i^-$, with $T_i^-$ defined as in \eqref{triang}. By \eqref{bounduv}, we  have
\begin{multline*}
	|\nabla w_\vep(x)|^2\le \frac{C}{\vep^2}(|\ud u_\vep(i,i +\vep e_1) |^2+|\ud u_\vep(i+\ep e_1,i + \vep e_1+\vep e_2) |^2)\\
	\leq  \frac{C\,C_\n}{\vep^2}(|\ud v_\vep(i,i +\vep e_1) |^2+|\ud v_\vep(i+\ep e_1,i + \vep e_1+\vep e_2) |^2),
\end{multline*}
which combined with the fact that $w_\vep(i)=v_\vep(i)$ and the Mean Value Theorem, yields
\begin{multline}\label{l2bound}
	|w_\vep(x)-\hat v_\vep(x)|^2\le 2(|w_\vep(x)-w_\vep(i)|^2+|\hat v_\vep(x)-v_\vep(i)|^2)\\
	\leq C (|\ud v_\vep(i,i +\vep e_1) |^2+|\ud v_\vep(i+\ep e_1,i + \vep e_1+\vep e_2)|^2).
\end{multline}
Using the same argument one can show that for any $x\in T_i^+$
\begin{equation*}
	|w_\vep(x)-\hat v_\vep(x)|^2
	\leq C (|\ud v_\vep(i+\ep e_2,i +\vep e_1+\ep e_2) |^2+|\ud v_\vep(i,i +\vep e_2) |^2).
\end{equation*}

% $x\in T_i^+$, it holds
%\begin{equation}\label{l2bound}
%	|w_\vep(x)-\tilde v_\vep(x)|^2\le C (|v_\vep(i +\vep e_1) - v_\vep(i)|^2+|v_\vep(i + \vep e_1+\vep e_2) - v_\vep(i + \vep e_1)|^2).
%\end{equation}
%(The proof of \eqref{l2bound} is fully analogous for $T_i^-\subset\Omega_\vep\setminus J_\vep$).
%
%A straightforward computation shows that for any $x\in T_i^+$
%\begin{multline*}
%	|\nabla w_\vep(x)|^2\le \frac{C}{\vep^2}(|u_\vep(i +\vep e_1) - u_\vep(i)|^2+|u_\vep(i + \vep e_1+\vep e_2) - u_\vep(i + \vep e_1)|^2)\\
%	\leq  \frac{C\,c_n}{\vep^2}(|v_\vep(i +\vep e_1) - v_\vep(i)|^2+|v_\vep(i + \vep e_1+\vep e_2) - v_\vep(i + \vep e_1)|^2),
%\end{multline*}
%where the last inequality is a consequence of \eqref{bounduv}.
%Then, using that $w_\vep(i)=v_\vep(i)$, by the mean value Theorem, we get
%\begin{multline}\label{meanvaluebound}
%	|w_\vep(x)-v_\vep(x)|^2\le 2(|w_\vep(x)-w_\vep(i)|^2+|v_\vep(x)-v_\vep(i)|^2)\\
%	\leq C (|v_\vep(i +\vep e_1) - v_\vep(i)|^2+|v_\vep(i + \vep e_1+\vep e_2) - v_\vep(i + \vep e_1)|^2),
%\end{multline}
%and hence \eqref{l2bound} holds true.

By integrating \eqref{l2bound} and using \eqref{massJvep}, we obtain
\begin{multline*}
\int_{\Omega}|w_\vep(x)-\hat v_\vep(x)|^2\ud x\le \int_{\Omega_\ep\setminus J_\ep}|w_\vep(x)-\hat v_\vep(x)|^2\ud x+\int_{J_\ep}|w_\vep(x)-\tilde v_\vep(x)|^2\ud x\\
\le C\ep^2 XY_{\ep}(v_\ep,\Omega_\ep\setminus J_\ep)+C\ep|\log\ep|,
\end{multline*}
 which, sending $\vep\to 0$, yields $\hat u^\n_\ep=w_\ep\to v$ in $L^2(\Omega;\R^2)$.
Now, since $\hat u_\ep\to u$ in $L^1(\Omega;\R^2)$, we clearly have that $\hat u^\n_\ep\to u^{\n}$ in $L^1(\Omega;\R^2)$,  so that we conclude
$u^\n=v$. 
\qed
\subsection{Proof of (ii): $\Gamma$-liminf inequality}

We can assume without loss of generality  that \eqref{energyBound} holds and that $\F^{\nn}_\vep(\vph_\vep) - M\pi|\log\vep|$ converges; then, by the Compactness result $(i)$ of Theorem \ref{mainthm}, 
$\hat{u}_\vep \weakly u$  in $SBV_\loc^2(\Omega\setminus (\supp Ju^\n);\R^2)$ for some $u \in \DD$.
% while $\hat{u}_\vep^\n\weakly u^\n$ in $H^1_\loc(\Omega\setminus(\supp Ju^\n);\R^2)$.
% for some $v\in H^1_\loc(\Omega\setminus(\supp Ju^\n);\mathcal S^1)$ with $v=u^\n$ a.e. in $\Omega$. 
Set $\mu:=Ju^\n$. Since $u\in\DD$, $\mu= \pi \sum_{i=1}^M d_i\delta_{x_i}$ with $|d_i|=1$ and $x_i\in\Omega$ for any $i=1,\ldots,M$.
Let $\rho>0$ be such that the balls $B_\rho(x_i)$ are pairwise disjoint and contained in $\Omega$. 
%and set again $\Omega_\rho(\mu):= \Omega \setminus \bigcup_{i = 1}^M B_\rho(x_i)$. 
As $S_u$ is rectifiable, it is contained,  up to a $\mathcal H^1$-negligible set, in   a countably  union $\bigcup_{i=1}^{\infty} C_i$ of compact $C^1$-curves contained in $\Om$. For every $N \in \N$, we define $S_u^N := S_u\cap \bigcup_{i=1}^N C_i$. Then, for any given $\delta > 0$ we define   the $\delta$-tube around $S_u^N$ in $\Omega_\rho(\mu)$ (see Figure \ref{tubefig}) as 
\[
  \tube := \{x \in \Omega_\rho(\mu) \colon \dist(x, S_u^N) < \delta\}.
\]
Moreover, we set $\tubep:= (\tube)_\ep$. 
Notice that for  all $\ep>0$ and for $\delta$ small enough,   $\Om\setminus \tubep $ has Lipschitz continuous boundary.
%\vskip-1mm
\begin{figure}[h!]
	\centering
	\def\svgwidth{0.4\textwidth}
	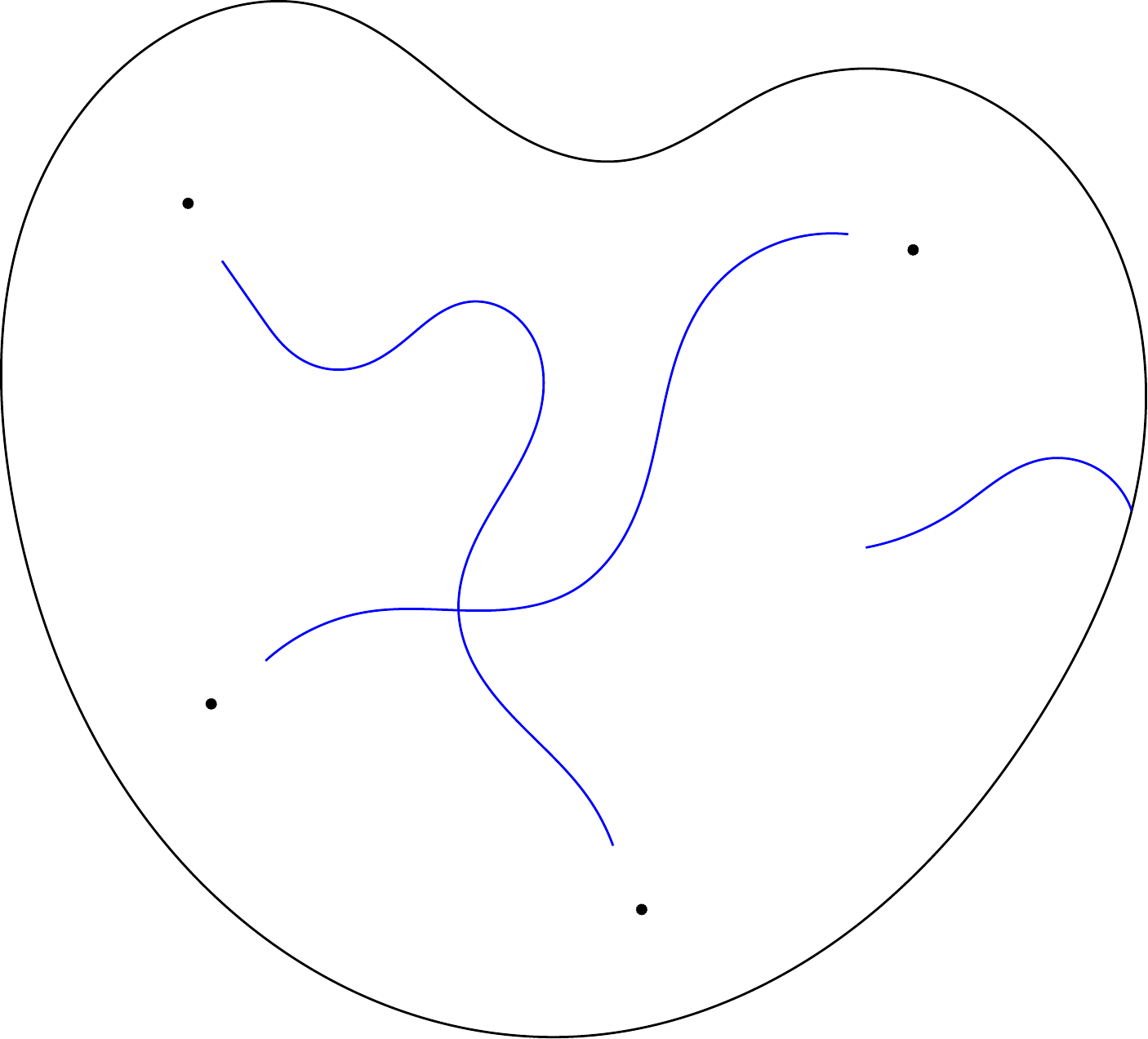
	\caption{ The blue lines represent  $C^1$-curves $C_1,\ldots,C_N$ and the dashed lines represent the boundary of the tube $T^N_{\delta,\rho}$.}
	\label{tubefig}
\end{figure}

By \eqref{comparxy}, Theorem \ref{gammal2} (c) and using the very definition of $\nren$ in \eqref{renormalizedEnergy} and \eqref{Wlocal}, we have that
\begin{multline}\label{vorticityEnergy}
		\liminf_{\vep \to 0} \F^{\nn}_\vep(\vph_\vep,\Omega) - M\pi|\log\vep|\\ 
		\geq \liminf_{\vep \to 0} (\F^\eff_\vep(\vth_\vep,\Omega \setminus \tube) - M\pi|\log\vep|) + \liminf_{\vep \to 0} \F^{\nn}_\vep(\vph_\vep,\tube)\\
\geq \mathcal{W}(v,\Omega \setminus \tube) + M \gamma+ \liminf_{\vep \to 0} \F^{\nn}_\vep(\vph_\vep, \tubep)\\
		= \mathcal{W}(v) + M \gamma - \frac{1}{2} \int_{\tube} |\nabla v|^2 \ud x + \liminf_{\vep \to 0} \F^{\nn}_\vep(\vph_\vep, \tubep).
\end{multline}

We claim that
\begin{equation}\label{lengthEnergy}
	\int_{S_u^N \cap \Omega_\rho(\mu)}|\nu_u|_1 \ud\mathcal{H}^1 \leq \liminf_{\vep \to 0} \F^{\nn}_\vep(\vph_\vep,\tubep).
\end{equation}
Using the  claim, the $\Gamma$-liminf inequality follows by sending $\ep\to 0$,  $\delta\to 0$, $\rho\to 0$  and $N\to +\infty$ in \eqref{vorticityEnergy} and in \eqref{lengthEnergy}.

It remains to prove the claim \eqref{lengthEnergy}. Fix $0<t<1$ and let $\Theta_t:\R\times \R\to \R$ be defined by
$$
\Theta_t(a,b):=\left\{\begin{array}{ll}
(t + \frac{1-t}{2\sin\frac\pi {\n}}|a-b|) &\textrm{if }|a-b|\le 2\sin\frac{\pi}{\n}\\
1&\textrm{if }|a-b|\ge 2\sin\frac{\pi}{\n}.
\end{array}\right.
$$
It is straightforward to check that $\Theta_t$ is positive, symmetric and satisfies the triangular inequality  \eqref{triangineq}. Let $\{A_n\}_{n\in\N}$ be a sequence of open sets with $A_n\subset \overline {A_{n}}\subset A_{n+1}$ for all $n\in\N$ and such that $\bigcup_{n\in\N} A_n = \tube$. 
Since $|u^+-u^-|\ge 2\sin\frac{\pi}{\n}$ on $S_u$, we have that $\Theta_t(u^+,u^-)=1$ on $S_u$, whence, recalling also   Theorem \ref{lscSBV},  we get
\begin{multline}\label{BVEllipticityAppl}
 \int_{S_u^N \cap \Omega_\rho(\mu)} |\nu_u|_1 \ud\mathcal{H}^1 
 \leq \int_{S_u \cap \tube}|\nu_u|_1 \ud\mathcal{H}^1= \int_{S_u \cap \tube}\Theta_t(u^+,u^-)|\nu_u|_1 \ud\mathcal{H}^1\\
 = \lim_{n\to +\infty} \int_{S_u \cap A_n}\Theta_t(u^+,u^-)|\nu_u|_1 \ud\mathcal{H}^1
\leq \lim_{n\to +\infty} \liminf_{\vep \to 0} \int_{S_{\hat u_\vep} \cap A_n}\Theta_t( \hat u_\vep^+, \hat u_\vep^-)|\nu_{ \hat u_\vep}|_1 \ud\mathcal{H}^1\\ 
\leq \liminf_{\vep \to 0} \int_{S_{\hat u_\vep} \cap T^N_{\delta - \sqrt{2} \ep, \rho, \ep}}\Theta_t( \hat u_\vep^+, \hat u_\vep^-)|\nu_{ \hat u_\vep}|_1 \ud\mathcal{H}^1.
\end{multline}

%Recalling that $\hat{u}_\vep \weakly u$  in $SBV_\loc^2(\Omega\setminus (\supp Ju^\n);\R^2)$, 
Therefore, in order to get \eqref{lengthEnergy} it is enough to prove that 
\begin{equation}\label{boh}
	\liminf_{\ep\to 0}\F^{\nn}_\vep(\vph_\vep,\tubep) + Ct \geq 
	\liminf_{\ep\to 0}\int_{S_{\hat u_\vep} \cap       T^N_{\delta - \sqrt{2} \ep, \rho, \ep}  }\Theta_t(\hat u_\vep^+,\hat u_\vep^-)|\nu_{\hat u_\vep}|_1 \ud\mathcal{H}^1,
\end{equation}
%for some function $r(t) \to 0 $ as $r\to 0$. 
for some $C>0$.
%%%%%%%%%%%%%%%%%%%%%%%%%%%%%
To this purpose, we set
\begin{eqnarray*}
&T_{\ep}^\negl := \bigcup \left\{i+\vep {Q} \colon i\in \Om_\vep^2,\, \di(\vph_\vep(k) - \vph_\vep(j),\textstyle \frac{2\pi} \n \Z) > \sqrt[3]\vep\,\textrm{ for some }\right.\\
		&\left.(j,k) \in  \Om_\ep^1 \text{ with } 
		%|i-j| \le \sqrt{2} \ep 
		\{j,k\} \cap \partial (i+\vep {Q}) \neq \emptyset \right\}.
\end{eqnarray*}

We first show that
\begin{equation}\label{neglig}
\liminf_{\ep\to 0}\int_{S_{\hat u_\vep}\cap T_{\ep}^\negl \cap \Om_\rho(\mu)}\Theta_t(\hat u_\vep^+,\hat u_\vep^-)|\nu_{\hat u_\vep}|_1 \ud\mathcal{H}^1=0.
\end{equation}

Indeed, by  the energy bounds \eqref{comparxy} and \eqref{liminfloc}, there exists $C_\rho>0$ such that
$$
XY_\ep(v_\ep, T_{\ep}^\negl ) \leq \F^\eff_\vep(\vth_\vep,\Omega_\rho(\mu))\leq C_\rho.
$$
Therefore, by the definition of $T_{\ep}^\negl$  there exists $C>0$ such that 
\begin{multline*}
C	\,  \sharp(T_{\ep}^\negl )_{\vep}^1 (1-\cos(\n\sqrt[3]\vep))
\\
\leq\sharp \{ (j,k)\in (T_{\ep}^\negl )_{\vep}^1: \, \di(\vph_\vep(k) - \vph_\vep(j),\textstyle \frac{2\pi} \n \Z) > \sqrt[3]\vep\} 
(1-\cos(\n\sqrt[3]\vep)) 
\\
\leq XY_\ep(v_\ep, T_{\ep}^\negl ) \leq C_\rho,
\end{multline*}
which by Taylor expansion yields 
 $\vep \sharp (T_{\ep}^\negl )_{\vep}^1 \to 0$ and eventually 
\begin{equation}\label{negligiblejump}
\lim_{\ep\to 0}\mathcal{H}^1({S_{ \hat u_\vep} \cap  T_{\ep}^\negl  } \cap \Om_\rho(\mu)) = 0.
\end{equation} 
Then \eqref{neglig} follows by noticing that $\Theta_t$ is uniformly bounded by $1$.

Consider the map $\mathcal J: (\R^2)_\ep^1 \to  (\R^2)_\ep^1$ defined by
$$
\mathcal J ((i,i+\ep e_1))=(i-\ep e_2,i),\quad \mathcal J((i,i+\ep e_2))=(i-\ep e_1,i) \qquad \text{ for all } i\in \ep\Z^2,
$$
and $\mathcal J(i,j) = - \mathcal J(j,i)$ for all $(i,j) \in (\R^2)_\ep^1$. 
Moreover, for all $(i,j)\in  (\R^2)_\ep^1$, let $(\imath, \jmath):= \mathcal J(i,j)$. Now, 
set  
$$
\tilde T^N_{\delta,\rho,\ep}:=\overline{T^N_{\delta,\rho,\ep} \setminus T_{\ep}^\negl}, 
\qquad
\tilde T^N_{\delta - \sqrt{2} \ep,\rho,\ep}:=\overline{T^N_{\delta - \sqrt{2} \ep,\rho,\ep} \setminus T_{\ep}^\negl}.
$$
  We show that for any  $(i,j) \in  (\tilde{T}^N_{\delta,\rho,\ep})_{\ep}^1$ and for any $x\in [i,j]$, there holds
\begin{equation}\label{maxest1}
|u_\ep(\jmath)-u_\ep(\imath)|-\sqrt[3]{\ep}\le| \hat u_\ep^+(x)-\hat u_\ep^-(x)|\le |u_\ep(\jmath)-u_\ep(\imath)|+\sqrt[3]{\ep}.
\end{equation}

\begin{figure}[h!]
	\centering
	\def\svgwidth{0.7\textwidth}
	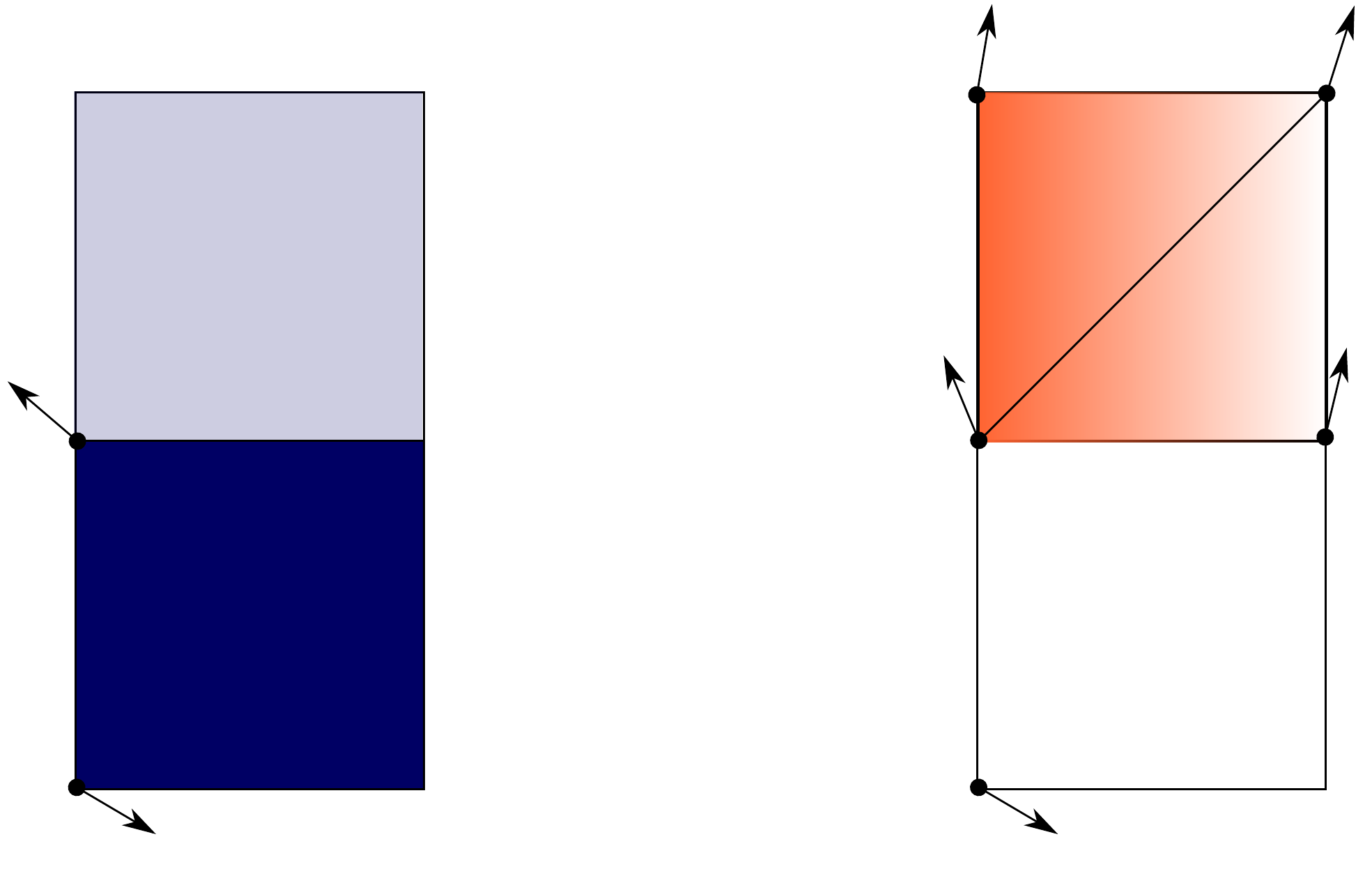
	\caption{On the left hand side: two adjacent jump cells. On the right hand side: a lower jump cell adjacent to a non-jump cell.}
\label{edge_vs_pair}
\end{figure}

%\begin{equation}\\ \label{maxest2}
%|u_\ep(i)-u_\ep(i-\ep e_1)|-\sqrt[3]{\ep}\le|\hat u_\ep^+(x)-\hat u_\ep^-(x)|\le |u_\ep(i)-u_\ep(i-\ep e_1)|+\sqrt[3]{\ep},
%\end{equation}
We prove  \eqref{maxest1} only in the case $j=i+ \ep e_1$,  the proof in the case $j=i+\ep e_2$   being analogous.  
Notice that it is trivial whenever $i+\ep Q$ and $i-\ep e_2+\ep Q$ belong to the family of   jump cells $JC_{\ffi_\ep}$ (see Figure \ref{edge_vs_pair}),  since in this case for any $x\in [i,i+\ep e_1]$
$$
|\hat u_\ep^+(x)-\hat u_\ep^-(x)|=|u_\ep(i)-u_\ep(i-\ep e_2)| =|u_\ep(\jmath)-u_\ep(\imath)|.
$$
Analogously, \eqref{maxest1} is trivial whenever both $i+\ep Q$ and $i-\ep e_2+\ep Q$ are not jump cells, since in this case 
$|\hat u_\ep^+(x)-\hat u_\ep^-(x)|=0$ and  (recall $(i,j)\notin T_\ep^\negl$) $|u_\ep(\jmath)-u_\ep(\imath)| \le \sqrt[3]{\ep}$.
Now, without loss of generality we assume that (see Figure \ref{edge_vs_pair}) $i+\ep Q$ is not a jump cell while $i-\ep e_2+\ep Q$ is a jump cell (the proof being fully analogous in the case that $i-\ep e_2+\ep Q$ is not a jump cell while $i+\ep Q$ is a jump cell). Furthermore, we prove only the second inequality in \eqref{maxest1} since the first one follows similarly. 
Let $\bar s\in [0,1]$ be such that $x= i+\bar s \ep e_1$. 
Then, using that $i+\ep Q\subset \tilde T^N_{\delta,\rho,\ep}$ and $i+\ep Q\notin JC_{\ffi_\ep}$, 
%for $\ep$ small enough 
we have
\begin{multline}\label{nobothjump}
 |\hat u_\ep^+(i+\bar s\ep e_1)-\hat u_\ep^-(i+\bar s\ep e_1)|\\
= |u_\ep(i)+\bar s(u_\ep(i+\ep e_1)-u_\ep(i))-u_\ep(i-\ep e_2)|\\
\le |u_\ep(i)-u_\ep(i-\ep e_2)|+|u_\ep(i+\ep e_1)-u_\ep(i)|\\
\le |u_\ep(i)-u_\ep(i-\ep e_2)|+ 2\sin\frac{\sqrt[3]\vep}{2} 
\le |u_\ep(i)-u_\ep(i-\ep e_2)|+\sqrt[3]\ep.
\end{multline}
In conclusion we have proven that \eqref{maxest1} holds true.

Notice now that  if $(i,j) \in  (\tilde{T}^N_{\delta,\rho,\ep})_{\ep}^1$,   
then $(i,j) \notin (T_{\ep}^\negl)_\ep^1$,  and hence $(\imath,\jmath)=\mathcal J(i,j)$ satisfies 
%$(i,j)\in (T_{\delta,\rho})_\ep^1\setminus  (T^\negl_{\delta,\rho,\ep})_\ep^1$, 
 either
\begin{equation}\label{closeto0}
\di(\ffi_\ep(\jmath)-\ffi_\ep(\imath),2\pi\Z)\le \sqrt[3]{\ep}
\end{equation}
or
\begin{equation}\label{closetojump}
\di(\ffi_\ep(\jmath)-\ffi_\ep(\imath),\frac{2l'\pi}{\n})\le \sqrt[3]{\ep},
\end{equation}
for some $l'\in \Z\setminus \n\Z$.
If $(\imath,\jmath)$  satisfies \eqref{closeto0}, then, by \eqref{maxest1}, 
$$
\max_{x\in[i,j]}|\hat u^+_\ep(x)-\hat u^-_\ep(x)|\le |u_\ep(\jmath)-u_\ep(\imath)|+\sqrt[3]{\ep}\\
=2|\sin\textstyle \frac{\ffi_\ep(\jmath)-\ffi_\ep(\imath)}{2}|+\sqrt[3]{\ep}\le 2\sqrt[3]{\ep}.
$$
On the other hand, if $(\imath,\jmath)$ satisfies \eqref{closetojump} for some $l'\in\Z\setminus\n\Z$, then, again by \eqref{maxest1},  and using Taylor expansion, for any $x\in[i,j]$ and for $\ep$ sufficiently small, we have
\begin{multline*}
2|\textstyle \sin\frac{l'\pi}{\n}|-2\sqrt[3]{\ep}\le 2|\sin\frac{\ffi_\ep(\jmath)-\ffi_\ep(\imath)}{2}|-\sqrt[3]{\ep}\\
=|u_\ep(\jmath)-u_\ep(\imath)|-\sqrt[3]{\ep}\le  |\hat u^+_\ep(x)- \hat u_\ep^-(x)|\le |u_\ep(\jmath)-u_\ep(\imath)|+\sqrt[3]{\ep}\\
\le2|\textstyle \sin\frac{l'\pi}{\n}|+2\sqrt[3]{\ep}.
\end{multline*}

It follows that, if $\ep$ is sufficiently small, the jump bonds of $\hat u_\ep$  in $\tilde T^N_{\delta,\rho,\ep}$, and, a fortiori,  the jump bonds of $\hat u_\ep$  in  $\tilde T^N_{\delta - \sqrt{2}\ep,\rho,\ep}$ lie in one of the following sets
\begin{eqnarray*}
&&I_{\delta,\rho,\ep}^{s}:=\{(i,j)\in(\tilde T^N_{\delta - \sqrt{2}\ep,\rho,\ep})_\ep^1\,:\,\max_{x\in[i,j]}|\hat u^+_\ep(x)-\hat u^-_\ep(x)|\le 2\sqrt[3]{\ep}\},\\
&&I_{\delta,\rho,\ep}^{b}:=\{(i,j)\in(\tilde T^N_{\delta - \sqrt{2}\ep,\rho,\ep})_\ep^1\,:\,\left|\textstyle |\hat u^+_\ep(x)-\hat u^-_\ep(x)|-2\sin\frac{l\pi}{\n}\right|\le 2\sqrt[3]{\ep}\\
&&\phantom{I_{\delta,\rho,\ep}^{bj}:=}\textrm{ for any }x\in[i,j], \textrm{ for some }l=1,\ldots,  {\n} -1  \}.
\end{eqnarray*}
Set 
$$
E^s:= \bigcup_{(i,j)\in I_{\delta,\rho,\ep}^{s}}[i,j], \qquad E^b:= \bigcup_{(i,j)\in I_{\delta,\rho,\ep}^{b}}[i,j].
$$
By the very definition of $\Theta_t$ and by the uniform bound of $\mathcal H^1(S_{\hat u_\ep})$ in \eqref{boundjump},
for $\ep$ small enough we have
\begin{equation}\label{thirdint}
\int_{S_{\hat u_\ep}\cap E^s}\Theta_t(\hat u_\ep^+,\hat u_\ep^-)|\nu_{\hat u_\ep}|_1\ud \mathcal H^1\le (\textstyle t+\frac{1-t}{\sin\frac{\pi}{\n}}\sqrt[3]{\ep})\mathcal H^1(S_{\hat u_\ep})\le C( t + \sqrt[3]{\ep}).
\end{equation}
% and hence \eqref{thirdint} follows by sending $t\to 0$.
In order to conclude the proof of \eqref{boh}, we first show that
\begin{equation}\label{firstint}
\liminf_{\ep\to 0}\int_{S_{\hat u_\ep}\cap E^b}\Theta_t(\hat u_\ep^+,\hat u_\ep^-)|\nu_{\hat u_\ep}|_1\ud \mathcal H^1\le \liminf_{\ep\to 0}\F_\ep^\n(\ffi_\ep, T^N_{\delta,\rho,\ep}) .
\end{equation}
%%%%%%%%%%%%%%%%%%%%%%%%%%%%%%%%%%%%%%%%%%%%%%%%%%%
%\begin{equation}\label{thirdint}
%\liminf_{t\to 0}\liminf_{\ep\to 0}\int_{S_{\hat u_\ep}\cap \bigcup_{(i,j)\in I_{\delta,\rho,\ep}^{s}}[i,j]}\Theta_t(\hat u_\ep^+,\hat u_\ep^-)|\nu_{\hat u_\ep}|_1\ud \mathcal H^1=0.
%\end{equation}
%%%%%%%%%%%%%%%%%%%%%%%%%%%%%%%%%%%%%%%%%%%%%%
To this purpose, we set
\begin{multline*}
\mathcal{I}^b_{\delta,\rho,\ep}:=\{(j,k)\in (T^N_{\delta,\rho,\ep})_\ep^1\setminus (T^\negl_{\ep})_\ep^1\,:\,\di(\textstyle \ffi_\ep(k)-\ffi_\ep(j),\frac{2l'\pi}{\n})\le\sqrt[3]{\ep}\\
\textrm{ for some }l'\in\Z\setminus\n\Z\},
\end{multline*}
Notice that $ \mathcal J (I^b_{\delta,\rho,\ep})\subset  (T^N_{\delta,\rho,\ep})_\ep^1$.
Now we show that, for $\ep$ small enough,
$ \mathcal J (I^b_{\delta,\rho,\ep})\subset \mathcal I^b_{\delta,\rho,\ep}$.
Let $(i,j)\in I^b_{\delta,\rho,\ep}$ and let $l\in 1,\ldots, \n -1$ be such that, for all $x\in [i,j]$ we have 
\begin{equation}\label{jumpuhat}
|\hat u^+_\ep(x)-\hat u^-_\ep(x)|\in[\textstyle 2\sin\frac{l\pi}{\n}-2\sqrt[3]\ep,2\sin\frac{l\pi}{\n}+2\sqrt[3]\ep].
\end{equation}
On one hand, by \eqref{maxest1} and \eqref{jumpuhat}, we have that
\begin{equation}\label{uno}
2\sin\frac{l\pi}{\n}-3\sqrt[3]\ep\le |u_\ep(\jmath)-u_\ep(\imath)|\le 2\sin\frac{l\pi}{\n} + 3\sqrt[3]\ep.
\end{equation}
On the other hand, since $(i,j)\notin (T^\negl_{\ep})_\ep^1$, there exists $l'\in\Z$ such that 
\begin{equation}\label{due}
\di(\textstyle \ffi_\ep(\jmath)-\ffi_\ep(\imath),\frac{2l'\pi}{\n})\le\sqrt[3]\ep.
\end{equation}
Therefore, by \eqref{uno} and \eqref{due} and Taylor expansion, it immediately follows that, for $\ep$ small enough, $l'\in l+\Z$ and hence $(\imath,\jmath)\in \mathcal I^b_{\delta,\rho,\ep}$. We have concluded the proof that $ \mathcal J (I^b_{\delta,\rho,\ep})\subset \mathcal I^b_{\delta,\rho,\ep}$.

Since the map $\mathcal J$ is  injective,  we have
\begin{equation}\label{jpvsjb}
\sharp I^{b}_{\vep,\delta,\rho}=\sharp \mathcal J( I^{b}_{\delta,\rho,\ep}) \leq \sharp \mathcal{I}^b_{\delta,\rho,\ep}.
\end{equation}
Now, using that $\Theta_t\le 1$, we obtain
\begin{equation*}
	\int_{S_{\hat u_\ep}\cap E^b}\Theta_t(\hat u_\vep^+,\hat u_\vep^-)|\nu_{\hat u_\vep}|_1\ud \mathcal H^1
\leq \vep \sharp I_{\delta,\rho,\ep}^b\le \ep \sharp \mathcal I^b_{\delta,\rho,\ep}\le \F_\vep^{\nn}(\vph_\vep,\tubep),
\end{equation*}
which is exactly \eqref{firstint}.

By \eqref{neglig}, \eqref{thirdint} and \eqref{firstint}, we deduce 
\begin{multline*}
\liminf_{\ep\to 0} \int_{S_{\hat u_\vep} \cap T^N_{\delta - \sqrt{2} \ep, \rho, \ep} }
\Theta_t(\hat u_\vep^+,\hat u_\vep^-)|\nu_{\hat u_\vep}|_1 \ud\mathcal{H}^1
\\
=
\liminf_{\ep\to 0} \int_{(S_{\hat u_\vep} \cap \tilde T^N_{\delta - \sqrt{2} \ep, \rho, \ep})  }
\Theta_t(\hat u_\vep^+,\hat u_\vep^-)|\nu_{\hat u_\vep}|_1 \ud\mathcal{H}^1
\\
\le
\liminf_{\ep\to 0} \int_{S_{\hat u_\vep} \cap E^b }
\Theta_t(\hat u_\vep^+,\hat u_\vep^-)|\nu_{\hat u_\vep}|_1 \ud\mathcal{H}^1 + C( t + \sqrt[3]{\ep})
\\
\le\liminf_{\ep\to 0} \F_\vep^{\nn}(\vph_\vep,\tubep) + Ct,
\end{multline*}
which concludes the proof of 
the inequality in \eqref{boh} and of the $\Gamma$-liminf inequality.

\subsection{Proof of (iii): $\Gamma$-limsup inequality}

We start this subsection by proving two lemmas that will be useful in the proof of the $\Gamma$-limsup inequality  \eqref{gammalimsup}.
The first lemma concerns with the first-order core energy induced by a singularity with degree $d=\pm 1$. 
We recall that $\theta(\cdot)$ denotes the angular polar coordinate in $\R^2\setminus\{0\}$.

\begin{lemma}\label{corelemma}
Let $d\in\{-1,+1\}$. For any $\ep, \sigma>0$, set
\begin{equation}\label{truegamma}
	\gamma'(\vep,\sigma):= \min_{\vph \in \mathcal{AF}_\vep(B_\sigma)} \left\{\F^{\nn}_\vep(\vph,B_\sigma): e^{i\n\vph(\cdot)} = e^{i d\theta(\cdot) }\text{ on } \partial_\vep B_\sigma\right\};
\end{equation}
there holds 
%limsups replaced by lims
\begin{equation}\label{samelimit}
\gamma=\limsup_{\sigma\to 0}\limsup_{\ep\to 0}\gamma'(\vep,\sigma)-\pi|{\textstyle \log\frac\ep\sigma}|=\liminf_{\sigma\to 0}\liminf_{\ep\to 0}\gamma'(\vep,\sigma)-\pi|\textstyle \log\frac\ep\sigma|,
\end{equation}
where $\gamma$ is defined in \eqref{gamma}.
\end{lemma}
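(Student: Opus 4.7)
The plan is to establish the two inequalities $\gamma \le \liminf_{\sigma\to 0}\liminf_{\ep\to 0}[\gamma'(\vep,\sigma)-\pi|\log(\ep/\sigma)|]$ and $\limsup_{\sigma\to 0}\limsup_{\ep\to 0}[\gamma'(\vep,\sigma)-\pi|\log(\ep/\sigma)|]\le \gamma$. Combined with the trivial $\liminf\le\limsup$, these force all three quantities to coincide. By the evenness of $f$ together with the change of variable $\vph\mapsto-\vph$, I may reduce the whole argument to the case $d=+1$.

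For the lower bound I would prove the stronger pointwise statement $\gamma(\vep,\sigma)\le \gamma'(\vep,\sigma)$. Given a minimizer $\vph^*$ of $\gamma'(\vep,\sigma)$, set $\vth^*:=\n\,\vph^*$. The pointwise inequality $f^{\nn}_\vep(t)\ge f^{\nn}(t)=f(\n t)$ underlying \eqref{comparxy} yields $\F^\eff_\vep(\vth^*,B_\sigma)\le \F^{\nn}_\vep(\vph^*,B_\sigma)=\gamma'(\vep,\sigma)$. The boundary condition $e^{i\vth^*}=e^{i\theta}$ on $\partial_\vep B_\sigma$ can be upgraded to the pointwise equality $\vth^*=\theta$ by adjusting $\vth^*$ at each boundary lattice point by an integer multiple of $2\pi$; the $2\pi$-periodicity of $f$ guarantees that $\F^\eff_\vep$ is unaffected. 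Hence $\vth^*$ becomes admissible in \eqref{gammaep}, $\gamma(\vep,\sigma)\le \gamma'(\vep,\sigma)$, and subtracting $\pi|\log(\ep/\sigma)|$ and invoking \eqref{gamma} gives the claimed lower bound.

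For the upper bound I would build a competitor for $\gamma'(\vep,\sigma)$ from a minimizer $\vth^*$ of $\gamma(\vep,\sigma)$ via the natural choice $\vph^*:=\vth^*/\n$. The boundary identity $e^{i\n\vph^*}=e^{i\theta}$ is immediate. To compare energies, on each bond $(i,j)$ I decompose $\vth^*(j)-\vth^*(i)=\ud^\el\vth^*(i,j)+2\pi P_{ij}$ with $\ud^\el\vth^*(i,j)\in(-\pi,\pi]$ and $P_{ij}\in\Z$, splitting bonds into \emph{good} ones ($P_{ij}=0$) and \emph{bad} ones ($P_{ij}\neq 0$). On a good bond $|\vph^*(j)-\vph^*(i)|\le \pi/\n$ lies in the principal well of $f^{\nn}_\vep$, so $f^{\nn}_\vep(\vph^*(j)-\vph^*(i))=f^{\nn}(\vph^*(j)-\vph^*(i))=f(\vth^*(j)-\vth^*(i))$ exactly. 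On a bad bond the trivial bound $f^{\nn}_\vep\le f^{\nn}+\vep$ produces an excess of at most $\vep$ over the corresponding $\F^\eff_\vep$ contribution, whence $\F^{\nn}_\vep(\vph^*)\le \F^\eff_\vep(\vth^*)+\vep\cdot\sharp\{\text{bad bonds}\}$.

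The main obstacle is controlling the number of bad bonds. The monodromy of a degree-$1$ vortex forces at least one bond with $P_{ij}\neq 0$ on every cycle enclosing the vortex cell, so the bad bonds form a ``branch cut'' joining the vortex cell to $\partial_\vep B_\sigma$. Exploiting the freedom to add integer multiples of $2\pi$ to $\vth^*$ at interior lattice points (which leaves $\F^\eff_\vep$ invariant), I would arrange the cut along a straight lattice path consisting of $O(\sigma/\vep)$ bonds; on each of them $\vph^*(j)-\vph^*(i)\approx\pm 2\pi/\n$ sits in a \emph{shallow} well of $f^{\nn}_\vep$ of height $\vep$, while $f(\vth^*(j)-\vth^*(i))\approx 0$, so the excess is sharply of order $\vep$ per bond. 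Summing, the total surplus is $\vep\cdot O(\sigma/\vep)=O(\sigma)$, and $\gamma'(\vep,\sigma)\le \gamma(\vep,\sigma)+O(\sigma)$. Sending first $\vep\to 0$ (so that $\gamma(\vep,\sigma)-\pi|\log(\ep/\sigma)|\to\gamma$ by \eqref{gamma}) and then $\sigma\to 0$, this unavoidable $O(\sigma)$ residual disappears in the \emph{outer} limit, which is precisely the mechanism ensuring that the fractional and integer core energies agree.
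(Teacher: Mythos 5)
Your overall skeleton coincides with the paper's: the lower bound follows from the pointwise inequality $f^{\nn}_\vep\ge f^{\nn}$ (so that $\gamma'(\vep,\sigma)\ge\gamma(\vep,\sigma)$ after the harmless $2\pi\Z$ adjustment on the boundary), and the upper bound is obtained by turning a minimizer $\vth_{\ep,\sigma}$ of \eqref{gammaep} into the competitor $\vth_{\ep,\sigma}/\n$ for \eqref{truegamma}, paying an excess of order $\vep$ only on the bonds of a ``branch cut'', whose total contribution $O(\sigma)$ is killed in the outer limit $\sigma\to 0$. This is exactly the mechanism of the paper.

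The genuine gap is in your count of bad bonds. You assert that the bad bonds form a cut joining \emph{the} vortex cell to $\partial_\vep B_\sigma$, hence a path of $O(\sigma/\vep)$ bonds. But a minimizer of \eqref{gammaep} is only known to have total discrete vorticity equal to $1$ in $B_\sigma$ (by the discrete Stokes formula \eqref{dacitare}); nothing a priori prevents it from carrying additional dipoles of cells with vorticity $\pm1$, and the cut must then connect \emph{all} of these cells pairwise and to the boundary. Its minimal length is the minimal connection of the whole vorticity measure $\mu(\vth_{\ep,\sigma})$, not the distance from one cell to $\partial B_\sigma$, and the freedom of adding multiples of $2\pi$ at interior lattice points cannot shorten it below that. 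The paper closes this gap by extending $\vth_{\ep,\sigma}$ to $B_{2\sigma}$ by the angular coordinate, checking that the extension has equibounded renormalized energy, and invoking the compactness part of Theorem \ref{gammal2} to conclude that $\mu(\vth_{\ep,\sigma})\flt\pi\delta_\xi$ for some $\xi\in\bar B_\sigma$; the minimal-connection estimate \eqref{minimalconn} then yields an integer $1$-current $\hat T_{\ep,\sigma}$ with boundary $\mu(\vth_{\ep,\sigma})$ and mass $\le 4\sigma+o_\ep(1)$, and the cut is taken in the $O(|\hat T_{\ep,\sigma}|/\vep)$ cells meeting its support. Without an argument of this type (or some other proof that extra dipoles of the minimizer are confined to scale $\vep$), the bound $\sharp\{\text{bad bonds}\}=O(\sigma/\vep)$, and hence the $O(\sigma)$ surplus, is unjustified. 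A minor additional remark: the paper does not assume $f$ even, so the reduction to $d=+1$ should be justified by the symmetry of the sum over ordered pairs in \eqref{effenergy} rather than by evenness of $f$.
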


\begin{proof}
We prove \eqref{samelimit} only in the case $d=1$, the proof in the case $d=-1$  being fully analogous.

By \eqref{comparxy}, $\gamma'(\ep,\sigma)\ge \gamma(\ep,\sigma)$ for any $\ep,\,\sigma>0$, whence
$$
\liminf_{\sigma\to 0}\liminf_{\ep\to 0}\gamma'(\vep,\sigma)-\pi|\textstyle \log\frac\ep\sigma|\ge\gamma.
$$
In order to prove \eqref{samelimit}, it is enough to show that
\begin{equation}\label{remaintoprove}
\gamma'(\ep,\sigma)\le\gamma(\ep,\sigma)+r(\ep,\sigma),
\end{equation}
where $\limsup_{\sigma\to 0}\limsup_{\ep\to 0}r(\ep,\sigma)=0$.

To this purpose fix $\sigma>0$. 
For any $\ep>0$, let $\vth_{\ep,\sigma}$ be a solution of the minimization problem in \eqref{gammaep} and set  $\mu_{\ep,\sigma}:=\mu(\vth_{\ep,\sigma})$. By \eqref{dacitare}, (for $\ep<\sqrt 2 \sigma$) we have 
$\mu_{\ep,\sigma}(B_\sigma)=1$.
Let $\vth'_{\ep,\sigma}:(B_{2\sigma})_\ep^0\to\R$ be the extension of $\vth_{\ep,\sigma}$ defined by
$$
\vth'_{\ep,\sigma}(i):=\left\{\begin{array}{ll}
\vth_{\ep,\sigma}(i)&\textrm{if }i\in (B_{\sigma})_\ep^0\\
\theta(i)&\textrm{otherwise},
\end{array}\right.
$$
where $\theta$ is the angular polar coordinate. Notice that $\mu(\vth'_{\ep,\sigma})=\mu_{\ep,\sigma}$ so that
\begin{equation}\label{richiama}
\mu(\vth'_{\ep,\sigma})(B_{2\sigma}\setminus B_\sigma)=0\qquad\textrm{and}\qquad \mu(\vth'_{\ep,\sigma})(\bar B_{\sigma})=1.
\end{equation}
We  show that there exists $\xi\in \bar B_\sigma$ such that
\begin{equation}\label{convtoxi}
\|\mu_{\ep,\sigma}-\delta_\xi\|_{\fla(B_{2\sigma})}\to 0.
\end{equation}
Notice that, by standard interpolation estimates \cite{C},
$$
\lim_{\ep\to 0} \F_\ep^\eff(\vth'_{\ep,\sigma},B_{2\sigma}\setminus (B_{\sigma})_\ep)=\pi\log 2;
$$
therefore, for sufficiently small $\ep$, we have
\begin{multline*}
\textstyle \F_\ep^\eff(\vth'_{\ep,\sigma},B_{2\sigma})-\pi|\log\frac\ep\sigma|\\
\le  \F_\ep^\eff(\vth'_{\ep,\sigma},B_{2\sigma}\setminus (B_\sigma)_\ep)+ \F_\ep^\eff(\vth'_{\ep,\sigma},B_{\sigma})-\pi|\log\frac\ep\sigma| 
\le 2\pi\log 2+ 2\gamma.
\end{multline*}
 By Theorem \ref{gammal2}, we have that, up to a subsequence,
\begin{equation}\label{flatconv}
\|\mu(\vth'_{\ep,\sigma})-\mu\|_{\fla(B_{2\sigma})}\overset{\ep\to 0}{\longrightarrow}0,
\end{equation}
where either $\mu\equiv 0$ or $\mu=\pi d\delta_\xi$ for some  $d\in\{-1,1\}$ and $\xi\in B_{2\sigma}$.

%Notice that there exists a  constant $c>0$ (possibly depending on $\sigma$), such that any $\eta\in W^{1,\infty}(B_\sigma)$ admits an extension $\eta'$ in $W_0^{1,\infty}(B_{2\sigma})$ with
%$$
%\|\eta'\|_{W_0^{1,\infty}(B_{2\sigma})}\le c \|\eta\|_{W^{1,\infty}(B_{\sigma})}.
%$$
%Hence, by \eqref{flatconv}, we get
%\begin{equation*}
%\|\mu_{\ep,\sigma}-d\delta_\xi\|_{\fla(\bar B_\sigma)}\le c\|\mu'_{\ep,\sigma}-d\delta_\xi\|_{\fla(B_{2\sigma})}\overset{\ep\to 0}{\longrightarrow}0,
%\end{equation*}
%$$
%\mu_{\ep,\sigma}(\cup_{i\in (B_\sigma)_\ep^2}(i+\ep Q))
%$$
%
% and by ,
%$$
%\mu(\vth'_{\ep,\sigma})(B_{2\sigma})=\mu_{\ep,\sigma}(\cup_{i\in (B_\sigma)_\ep^2}(i+\ep Q))=1
%$$
Since, in view of \eqref{richiama}, we have $\mu(B_{2\sigma}\setminus B_{\sigma})=0$ and $\mu(\bar B_\sigma)=\pi$, we get that $\mu=\pi \delta_\xi$ with $\xi\in\bar B_\sigma$,  which concludes the proof of \eqref{convtoxi}.

By \eqref{minimalconn} there exist integer 1-currents $T_{\ep,\sigma}$, whose support is a finite union of segments,  with $\partial  T_{\ep,\sigma}\res B_{2\sigma}=\mu_{\ep,\sigma}-\delta_\xi$ and
\begin{equation}\label{Tepsig}
\lim_{\ep\to 0}|T_{\ep,\sigma}|= 0.
\end{equation}
%Let now $T_{\ep,\sigma}$ be a minimal 1-current associated to the 0-current $\mu_{\ep,\sigma}-\delta_{\xi}$, namely $\partial  T_{\ep,\sigma}=\mu_{\ep,\sigma}-\delta_\xi$ and
%$$
%|T_{\ep,\sigma}|\le |T|\qquad\textrm{for any 1-current }T\textrm{ with }\partial T=\mu_{\ep,\sigma}-\delta_{\xi}.
%$$
%%We should state why a minimal current exists and is unique.
%Then, by the very definition of flat norm , we have that
%\begin{equation}\label{Tepsig}
%\lim_{\ep\to 0}|T_{\ep,\sigma}|= 0.
%\end{equation}
Let moreover $T^\xi$ be an integer $1$-current with $\partial T^\xi\res B_{2\sigma}=\delta_\xi$ 
and $|T^\xi|\le 4\sigma$ 
(such a current can be constructed, for instance, by considering an oriented segment  $I^\xi$ joining $\xi$ with a point in $\partial B_ {2\sigma}$).

Set $\hat T_{\ep,\sigma}:=T_{\ep,\sigma}+T^\xi$; by construction $\partial \hat T_{\ep,\sigma}\res B_{2\sigma}=\mu_{\ep,\sigma}$ and, by \eqref{Tepsig}, 
\begin{equation}\label{lengthThat}
\limsup_{\ep\to 0}|\hat T_{\ep,\sigma}|\le \limsup_{\ep\to 0}|T_{\ep,\sigma}|+\limsup_{\ep\to 0}| T^\xi| \le 4\sigma.
\end{equation}
Set
$$
V:=\bigcup_{\newatop{i\in (B_\sigma)_\ep^2}{(i+\ep Q)\cap\,\supp \hat T_{\ep,\sigma}\neq\emptyset}}(i+\ep Q)
$$
and $E:=B_{\sigma}\setminus V$. 
%We assume now that $\ep$ is sufficiently small (with respect to $\sigma$) in such a way that $E$ is connected. 
%This is not trivial and most probably false. Is connectedness really needed. Can we not apply the argument below to %each connected component of E.
%By construction, it easily follows that
%\begin{equation}\label{zerovort}
%\alpha_{\vth_{\ep,\sigma}}(i)=0\qquad\textrm{for any }i\in E_\ep^2,
%\end{equation}
%with $\alpha$ defined as in \eqref{disctop}.
%For any $i,j\in E_\ep^0$ with $|i-j|=\ep$, we set $\xi_{\ep,\sigma}(i,j):=\ud^P\vth_{\ep,\sigma}(i,j)$
Let $E_{1},\ldots,E_{K_{\ep,\sigma}}$ denote the  connected components of $E_\ep$.
Let $A\subset B_\sigma$ be such that $A_\ep$ is simply connected and $\partial A_\ep\cap\supp \hat T_{\ep,\sigma}=\emptyset$. Then, it is easy to see that $\mu_{\ep,\sigma}(A_\ep)=0$.
By \eqref{dacitare}, it follows that
\begin{equation}\label{disctopnull}
\sum_{l=1}^{L-1}\ud^\el \vth_{\ep,\sigma}(i^l,i^{l+1})+\ud^\el\vth_{\ep,\sigma}(i^L,i^1)=0,
\end{equation}
where $L:=\sharp \partial_\ep A$ and $\partial_\ep A:=\{i^1,\ldots,i^L\}$, with $(i^l,i^{l+1})\in A_\ep^1$ for any $l=1,\ldots,L-1$. Therefore, it is well-defined the function $\bar \theta_{\ep,\sigma}:E_\ep^0\to\R$ constructed as follows:
For any $k=1,\ldots,K_{\ep,\sigma}$ fix $i^0_k\in (E_{k})_{\ep}^0$ and for any $i\in (E_{k})_{\ep}^0$, set
\begin{equation}\label{tetaprimo}
\bar\vth_{\ep,\sigma}(i): = \vth_{\ep,\sigma}(i^0_k)+\sum_{l=1}^L\ud^\el\vth_{\ep,\sigma}(i_k^{l-1},i_k^{l}),
\end{equation}
where  $\{i^0_k,\,i^1_k,\,\ldots,i_k^L(=i)\}\subset (E_k)_\ep^0$ is  such that $(i_k^{l-1},i_k^l)\in (E_{k})_\ep^1$ for $l=1,\ldots,L$.

\begin{figure}[h!]
		\centering
	\includegraphics[width=0.4\textwidth]{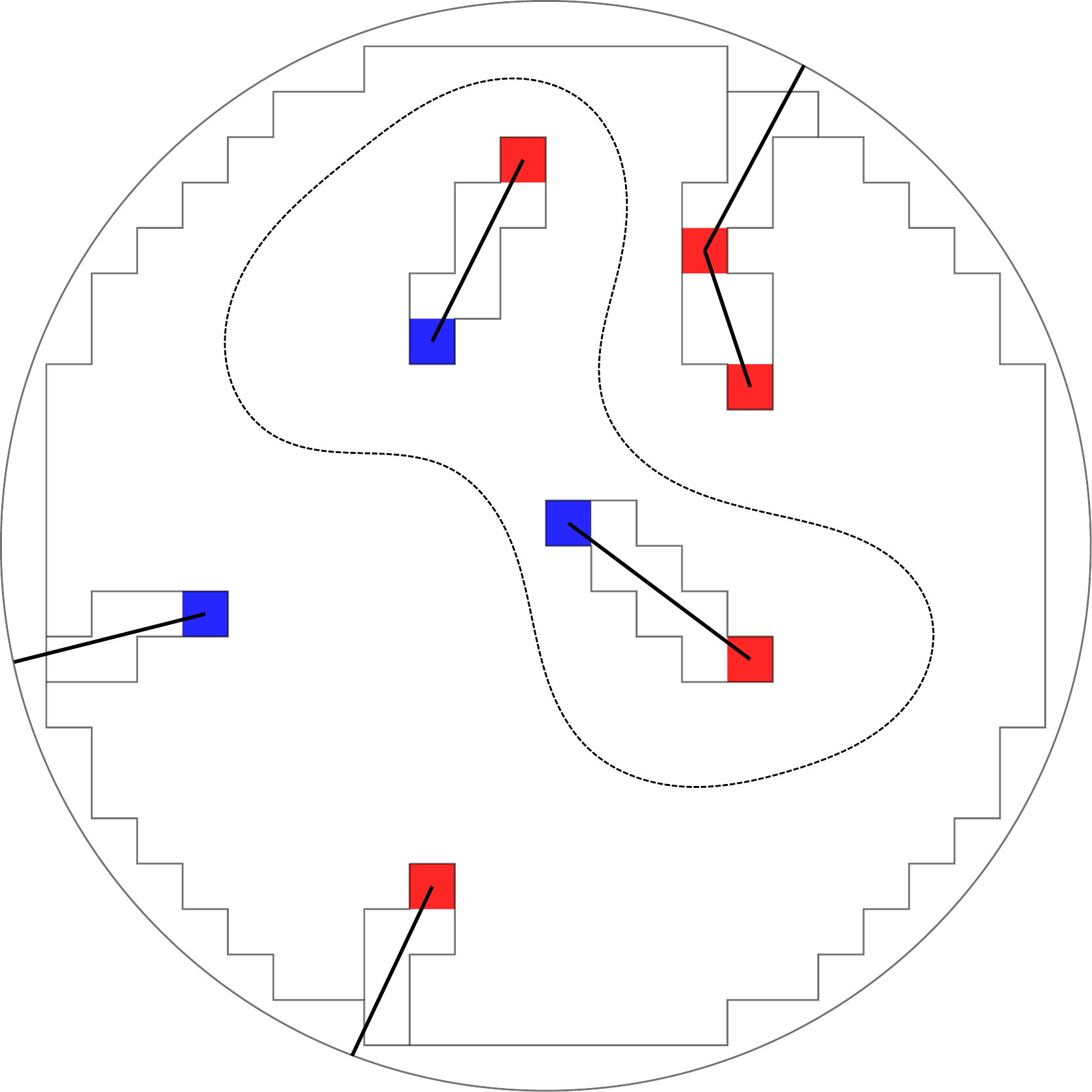}
\caption{Closed path in the core $B_{\sigma}$: The line segments denote the support of $T_{\ep,\sigma}$. All cells having positive (resp., negative) vorticity $\alpha_{\vth_{\ep,\sigma}}$ are colored red  (resp.,  blue). It can be easily seen that any closed path in $E$ is surrounding the same amount red and blue cells, implying that the definition of $\bar\vth_{\ep,\sigma}$ in \eqref{tetaprimo} is well-posed.}
\label{corefig}
\end{figure}
Indeed, by \eqref{disctopnull}, one can easily check that $\bar\vth_{\ep,\sigma}(i)$ does not depend on the path $i^0_k,\,i^1_k,\,\ldots,i_k^L(=i)$ (see also Figure \ref{corefig}). Moreover, since  for any $(i,j)\in E_\ep^1$ there exists $k=1,\ldots,K_{\ep,\sigma}$ such that $(i,j)\in (E_k)_\ep^1$, there  holds
\begin{equation}\label{onlyela}
\ud\bar\vth_{\ep,\sigma}(i,j)=\ud^\el\vth_{\ep,\sigma}(i,j)\qquad\textrm{ for all }(i,j)\in E_\ep^1.
\end{equation}
Furthermore, by the very definition of $P$, it immediately follows that 
\begin{equation}\label{equaltheta}
\bar\vth_{\ep,\sigma}(i)-\vth_{\ep,\sigma}(i)\in 2\pi\Z\qquad\textrm{for any }i\in E_\ep^0.
\end{equation}
For any $i\in (B_\sigma)_\ep^0$, we set
\begin{equation}
\bar\ffi_{\ep,\sigma}(i):=\left\{\begin{array}{ll}
\frac{\bar\vth_{\ep,\sigma}(i)}{\n}&\textrm{if }i\in E_{\ep}^0\\
\frac{\vth_{\ep,\sigma}(i)}{\n}&\textrm{otherwise}.
\end{array}\right.
\end{equation}
By \eqref{equaltheta}, $\bar\ffi_{\ep,\sigma}$ is a competitor for the minimum problem \eqref{truegamma}.

In order to prove \eqref{remaintoprove}, it is enough to show that
\begin{equation}\label{remain2}
\F_\ep^{\nn}(\bar\ffi_{\ep,\sigma},B_\sigma)\le \F^\eff_{\ep}(\vth_{\ep,\sigma})+r(\ep,\sigma),
\end{equation}
with $\limsup_{\sigma\to 0}\limsup_{\ep\to 0}r(\ep,\sigma)=0$.
%Indeed, if \eqref{remain2} holds true, using that $\ffi'_{\ep,\sigma}$ is a competitor for \eqref{truegamma}, we obtain
%$$
%\gamma'(\ep,\sigma)\le \F_\ep^{\nn}(\ffi'_{\ep,\sigma},B_\sigma)\le \F^\eff_\ep(\vth_{\ep,\sigma})+r(\ep,\sigma)=\gamma(\ep,\sigma)+r(\ep,\sigma),
%$$
%and hence \eqref{remaintoprove}.

In order to get \eqref{remain2} we first notice that by \eqref{onlyela}, for any $(i,j)\in E_\ep^1$, there holds
$$
-\frac\pi \n\le\bar\ffi_{\ep,\sigma}(j)-\bar\ffi_{\ep,\sigma}(i)\le\frac\pi \n,
$$
whence, using \eqref{equaltheta} and the very definition of $f_\ep^{\nn}$ in \eqref{potential}, we obtain
\begin{equation}\label{elasticcore}
\frac 1 2\sum_{{(i,j)\in E_\ep^1\setminus V_\ep^1}}f_{\ep}^{\nn}(\bar\ffi_{\ep,\sigma}(j)-\bar\ffi_{\ep,\sigma}(i))=\frac 1 2\sum_{{(i,j)\in E_\ep^1\setminus V_\ep^1}}f(\vth_{\ep,\sigma}(j)-\vth_{\ep,\sigma}(i)).
\end{equation}
Moreover,  again  by the definition of $f_\ep^{\nn}$, we have
\begin{multline}\label{jumpscore}
\frac 1 2\sum_{(i,j)\in V_\ep^1}f_{\ep}^{\nn}(\bar\ffi_{\ep,\sigma}(j)-\bar\ffi_{\ep,\sigma}(i))
%+\frac 1 2\sum_{\newatop{(i,j)\in (B_\sigma)_\ep^1}{i\in V_\ep^0,\,j\in E_\ep^0}}f_{\ep}^{\nn}(\bar\ffi_{\ep,\sigma}%(j)-\bar\ffi_{\ep,\sigma}(i))
\le \frac 1 2\sum_{(i,j)\in V_\ep^1}f(\vth_{\ep,\sigma}(j)-\vth_{\ep,\sigma}(i))+2\ep\sharp V_\ep^0.
%+\frac 1 2\sum_{\newatop{(i,j)\in (B_\sigma)_\ep^1}{i\in V_\ep^0,\,j\in E_\ep^0}}f(\vth_{\ep,\sigma}(j)-\vth_{\ep,\sigma}(i))+
%\ep\sharp\{{(i,j)\in (B_\sigma)_\ep^1}\,:\,{i\in V_\ep^0,\,j\in E_\ep^0}\}
%\\
%\le 
%\frac 1 2 \sum_{(i,j)\in V_\ep^1}f(\vth_{\ep,\sigma}(j)-\vth_{\ep,\sigma}(i))+
%\frac 1 2 \sum_{\newatop{(i,j)\in (B_\sigma)_\ep^1}{i\in V_\ep^0,\,j\in E_\ep^0}}f(\vth_{\ep,\sigma}(j)-\vth_{\ep,\sigma}(i))+\ep\sharp V_\ep^0.
\end{multline}
%C increases by a fixed constant independent of /ep or /sigma.
%where $\limsup_{\sigma\to 0}\limsup_{\ep\to 0}r(\ep,\sigma)=0$.

Set  $r(\ep,\sigma):=2\ep\sharp V_\ep^0$, by construction and by \eqref{lengthThat}, we have that 
\begin{equation}\label{remainder}
\limsup_{\sigma\to 0}\limsup_{\ep\to 0}r(\ep,\sigma)\le C \limsup_{\sigma\to 0}\limsup_{\ep\to 0}|\hat T_{\ep,\sigma}|\le C\limsup_{\sigma\to 0}|T^\xi|=0.
\end{equation}
In the end, by \eqref{elasticcore}, \eqref{jumpscore} and \eqref{remainder}, we get
\begin{eqnarray*}
&&\F_\ep^{\nn}(\bar\ffi_{\ep,\sigma},B_\sigma)=\frac 1 2\sum_{(i,j)\in E_\ep^1\setminus V_\ep^1}f_{\ep}^{\nn}(\bar\ffi_{\ep,\sigma}(j)-\bar\ffi_{\ep,\sigma}(i))\\
&&+\frac 1 2\sum_{(i,j)\in V_\ep^1}f_{\ep}^{\nn}(\bar\ffi_{\ep,\sigma}(j)-\bar\ffi_{\ep,\sigma}(i))
%+\frac 1 2\sum_{\newatop{(i,j)\in (B_\sigma)_\ep^1}{i\in V_\ep^0,\,j\in E_\ep^0}}f_{\ep}^{\nn}(\bar\ffi_{\ep,\sigma}(j)-\bar\ffi_{\ep,\sigma}(i))
\\
&&=\frac 1 2\sum_{(i,j)\in E_\ep^1}f(\vth_{\ep,\sigma}(j)-\vth_{\ep,\sigma}(i))+\frac 1 2\sum_{(i,j)\in V_\ep^1}f_{\ep}^{\nn}(\bar\ffi_{\ep,\sigma}(j)-\bar\ffi_{\ep,\sigma}(i))
%+\frac 1 2\sum_{\newatop{(i,j)\in (B_\sigma)_\ep^1}{i\in V_\ep^0,\,j\in E_\ep^0}}f_{\ep}^{\nn}(\bar\ffi_{\ep,\sigma}(j)-\bar\ffi_{\ep,\sigma}(i))
\\
&&\le \frac 1 2\sum_{(i,j)\in E_\ep^1\setminus V_\ep^1}f(\vth_{\ep,\sigma}(j)-\vth_{\ep,\sigma}(i))
+\frac 1 2\sum_{(i,j)\in V_\ep^1}f(\vth_{\ep,\sigma}(j)-\vth_{\ep,\sigma}(i))
%&&+\frac 1 2\sum_{\newatop{(i,j)\in (B_\sigma)_\ep^1}{i\in V_\ep^0,\,j\in E_\ep^0}}f(\vth_{\ep,\sigma}(j)-\vth_{\ep,\sigma}(i))
+r(\ep,\sigma)
\\
&&=\F_\ep^\eff(\vth_{\ep,\sigma},B_\sigma)+r(\ep,\sigma)
\end{eqnarray*}
%In the end, using that
%\begin{multline*}
%\F_\ep^{\nn}(\ffi'_{\ep,\sigma},B_\sigma)=\sum_{(i,j)\in E_\ep^1}f_{\ep}^{\nn}(\ffi'_{\ep,\sigma}(j)-\ffi'_{\ep,\sigma}(i))\\
%+\sum_{(i,j)\in V_\ep^1}f_{\ep}^{\nn}(\ffi'_{\ep,\sigma}(j)-\ffi'_{\ep,\sigma}(i))
%+\sum_{\newatop{(i,j)\in (B_\sigma)_\ep^1}{i\in V_\ep^0,\,j\in E_\ep^0}}f_{\ep}^{\nn}(\ffi'_{\ep,\sigma}(j)-\ffi'_{\ep,\sigma}(i))
%\end{multline*}
%and
%\begin{multline*}
%\F_\ep^\eff(\vth_{\ep,\sigma},B_\sigma)=\sum_{(i,j)\in E_\ep^1}f(\vth_{\ep,\sigma}(j)-\vth_{\ep,\sigma}(i))\\
%+\sum_{(i,j)\in V_\ep^1}f(\vth_{\ep,\sigma}(j)-\vth_{\ep,\sigma}(i))
%+\sum_{\newatop{(i,j)\in (B_\sigma)_\ep^1}{i\in V_\ep^0,\,j\in E_\ep^0}}f(\vth_{\ep,\sigma}(j)-\vth_{\ep,\sigma}(i)),
%\end{multline*}
which proves \eqref{remain2} and concludes the proof of the lemma.
\end{proof}

\begin{remark}\label{transgrid}
For any $y\in\R^2$, set
\begin{equation}\label{truegamma1}
\gamma^y(\vep,\sigma):=\min_{\vph \in \mathcal{AF}_\vep(B_\sigma(y))} \left\{\F^{\nn}_\vep(\vph,B_\sigma(y)): e^{i\n\vph(\cdot)} = e^{i\theta(\cdot-y) }\text{ on } \partial_\vep B_\sigma(y)\right\};
\end{equation}
it is easy to check that for any $y\in\R^2$
$$
%Due to discretization we can't just simply "rotate things". But the argument simply passes over. 
\gamma=\limsup_{\sigma\to 0}\limsup_{\ep\to 0} \gamma^y(\vep,\sigma)-\pi|{\textstyle \log\frac\ep\sigma}|=\liminf_{\sigma\to 0}\liminf_{\ep\to 0} \gamma^y(\vep,\sigma)-\pi|{\textstyle \log\frac\ep\sigma}|,
$$
where $\gamma$ is defined in \eqref{gamma}.
\end{remark}

Now we pass to a lemma which allows to deal with the far-field energy, i.e., with the energy outside suitable balls centered in the limit singularities. To this purpose,
for any $u\in\DD$, we denote,  as usual, $Ju^\n$ by  $\mu=\pi\sum_{i=1}^Md_i\delta_{x_i}$ (see \eqref{Dn} and \eqref{barDn}). Moreover, we recall that for any $D\subset\Omega$ open and for any $\rho>0$, $D_\rho(\mu)$ is defined according to \eqref{holedomain}.

\begin{lemma}\label{bulklemma}
Let $u\in\DD$ and
let $\Gamma_1,\ldots,\Gamma_M$ be pairwise disjoint segments such that $\Gamma_i$ joins $x_i$ with $\partial\Omega$ and $\mathcal H^1(\Gamma_i\cap S_u)=0$ for any $i=1,\ldots,M$. 
Fix $\rho>0$, and set $\Omega^\lin_\rho(\mu) := \Omega_\rho(\mu) \setminus \bigcup_{i = 1}^M\Gamma_i$. 
Then, there exist $w\in H^1(\Lin)$ with $[w] = \pm\frac{2\pi}{\n}$ on each $\Gamma_i$ and $\psi \in SBV(\Omega_\rho(\mu); \mathcal Z)$, such that $\ffi:=\psi+w$ is a lifting of $u$ in $\Omega_\rho(\mu)$, i.e., $e^{i\ffi}=u$.
In particular, for any $i=1,\ldots,M$
\begin{equation}\label{2pijumps}
([\psi]+[w])\res \Gamma_i\in 2\pi\Z.
\end{equation}

Moreover, there exist two sequences $\{w_h\}\subset C^\infty(\Lin)$ with $[w_h] = [w]$ and $\{\psi_h\}\subset SBV(\Omega_\rho(\mu);\mathcal Z)$ with polyhedral jump set $S_{\psi_h}$, such that, setting $\ffi_h:=\psi_h+w_h$,  the following properties are satisfied:
\begin{itemize}
\item[(i)] $w_h \to w$ in $H^1(\Lin)$;
\item[(ii)] $\psi_h \weakly \psi$ in $SBV(\Omega_\rho(\mu))$;
%%\item[(iii)] $\lim_{h\to\infty}\mathcal H^1(\{x\in\bigcup_{i=1}^M\Gamma_i\,:\,[\psi_h](x)\neq[\psi](x)\})=0$;
\item[(iii)] $\lim_{h\to\infty} \int_{S_{\ffi_h}} \phi( \ffi_h^+, \ffi_h^-, \nu_{\ffi_h}) \ud \mathcal H^1 =			\int_{S_\ffi} \phi(\ffi^+, \ffi^-,\nu_{\ffi}) \ud \mathcal H^1$ for any bounded and continuous integrand $\phi \colon \R^2\times \R^2\times  \mathcal S^1 \to \R^+$ satisfying $\phi(a , b,\nu) = \phi(b, a,-\nu)$ for all $( a, b,\nu) \in \R^2\times \R^2\times \mathcal S^1$. 
\end{itemize}

%Let $u\in\DD$ and
%let $\Gamma_1,\ldots,\Gamma_M$ be pairwise disjoint segments such that $\Gamma_i$ joins $x_i$ with $\partial\Omega$ and $\mathcal H^1(\Gamma_i\cap S_u)=0$ for any $i=1,\ldots,M$. 
%Fix $\rho>0$, and set $\Omega^\lin_\rho(\mu) := \Omega_\rho(\mu) \setminus \bigcup_{i = 1}^M\Gamma_i$. 
%Then, there exist $w\in H^1(\Lin)$ with $[w] = \frac{2\pi}{\n}$ on each $\Gamma_i$ and $\psi \in SBV(\Omega_\rho(\mu); \mathcal Z)$, such that $u=e^{i(\psi+w)}$ in $\Omega_\rho(\mu)$.
%In particular, for any $i=1,\ldots,M$
%\begin{equation}\label{2pijumps}
%([\psi]+[w])\res \Gamma_i\in 2\pi\Z.
%\end{equation}
%Moreover, there exist two sequences $\{w_h\}\subset C^\infty(\Lin)$ with $[w_h] = [w]$ and $\{\psi_h\}\subset SBV(\Omega_\rho(\mu);\mathcal Z)$ such that $S_{\psi_h}$ is polyhedral, satisfying the following properties
%\begin{itemize}
%\item[(i)] $w_h \to w$ in $H^1(\Lin)$;
%\item[(ii)] $\psi_h \weakly \psi$ in $SBV(\Omega_\rho(\mu))$;
%%\item[(iii)] $\lim_{h\to\infty}\mathcal H^1(\{x\in\bigcup_{i=1}^M\Gamma_i\,:\,[\psi_h](x)\neq[\psi](x)\})=0$;
%\item[(iii)] $\lim_{h\to\infty} \int_{S_{\psi_h}} \phi( \psi_h^+, \psi_h^-, \nu_{\psi_h}) \ud \mathcal H^1 =			\int_{S_\psi} \phi(\psi^+, \psi^-,\nu_{\psi}) \ud \mathcal H^1$ for any continuous integrand $\phi \colon \mathcal Z \times \mathcal Z\times  \mathcal S^1 \to \R^+$ satisfying $\phi(a , b,\nu) = \phi(b, a,-\nu)$ for all $( a, b,\nu) \in \mathcal Z \times \mathcal Z\times \mathcal S^1$. 
%\end{itemize}
\end{lemma}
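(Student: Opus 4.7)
The plan is to lift $u^\n$ on the simply connected domain $\Lin$ and to divide the resulting phase by $\n$ to define $w$; then $\psi$ is obtained as the obstruction to extracting a global $\n$-th root of this lifting of $u$. Since $u\in\DD$, the map $u^\n$ belongs to $H^1(\Omega_\rho(\mu);\mathcal S^1)$ with $Ju^\n\equiv 0$ on $\Omega_\rho(\mu)$. Because $\Omega$ is simply connected and the $\Gamma_i$ are pairwise disjoint segments from each hole $B_\rho(x_i)$ to $\partial\Omega$, the set $\Lin$ is simply connected. The classical Poincar\'e Lemma for unit-modulus $H^1$ maps with zero Jacobian on simply connected domains then yields $\tilde\theta\in H^1(\Lin)$ with $u^\n=e^{i\tilde\theta}$, unique up to an additive constant in $2\pi\Z$. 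The trace jump of $\tilde\theta$ across $\Gamma_i$ equals $2\pi d_i$, where $d_i=\pm 1$ is the degree of $u^\n$ on $\partial B_\rho(x_i)$; setting $w:=\tilde\theta/\n$ gives $w\in H^1(\Lin)$ with $[w]=\pm 2\pi/\n$ on each $\Gamma_i$.

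Next I would note that $(u e^{-iw})^\n\equiv 1$ a.e., so $u e^{-iw}$ takes values in the discrete set $\mathcal R:=\{e^{2\pi i k/\n}\colon k=0,\dots,\n-1\}$ of $\n$-th roots of unity. Since $u\in SBV(\Omega;\mathcal S^1)$ and $e^{-iw}\in H^1(\Lin;\mathcal S^1)$, the product is in $SBV(\Lin;\mathcal R)$ with vanishing approximate gradient; being valued in a discrete set it is piecewise constant with jumps contained in $S_u\cap\Lin$. Selecting in each constant region a phase representative in $\mathcal Z:=\{2\pi k/\n\colon k=0,\dots,\n-1\}$ produces $\psi\in SBV(\Lin;\mathcal Z)$ with $u=e^{i(\psi+w)}$ on $\Lin$. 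To extend $\psi$ across each $\Gamma_i$ note that, since $\mathcal H^1(\Gamma_i\cap S_u)=0$, the field $u$ is approximately continuous $\mathcal H^1$-a.e. on $\Gamma_i$; combined with $[w]=\pm 2\pi/\n$ this forces $[\psi]\equiv \mp 2\pi/\n\pmod{2\pi}$ on $\Gamma_i$ with both one-sided traces still in $\mathcal Z$. Hence $\psi\in SBV(\Omega_\rho(\mu);\mathcal Z)$ with $\mathcal H^1(S_\psi)<\infty$, and \eqref{2pijumps} holds on each $\Gamma_i$ precisely because $u=e^{i(\psi+w)}$ is continuous through the cut.

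For the approximation, the $w_h$ part would be built by fixing a reference ``cut function'' $w_\lin$, smooth on $\Lin$ with exact jumps $\pm 2\pi/\n$ across the $\Gamma_i$ (obtained via a local tubular chart around each cut and a cutoff away from $\partial B_\rho(x_i)$ and $\partial\Omega$); then $w-w_\lin\in H^1(\Omega_\rho(\mu))$ can be mollified to $\tilde w_h\in C^\infty$, and $w_h:=w_\lin+\tilde w_h\in C^\infty(\Lin)$ satisfies $[w_h]=[w]$ and $w_h\to w$ in $H^1(\Lin)$, proving (i). For (ii) I would apply Theorem \ref{polyapprox} directly to $\psi$, obtaining $\psi_h\in SBV(\Omega_\rho(\mu);\mathcal Z)$ with polyhedral $S_{\psi_h}$, $\psi_h\weakly\psi$ in $SBV^p$ for any $p>1$, and the integral convergence for any continuous symmetric integrand on $\mathcal Z\times\mathcal Z\times\mathcal S^1$. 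Setting $\ffi_h:=\psi_h+w_h$, property (iii) follows by splitting the integral along $S_{\ffi_h}\subset S_{\psi_h}\cup\bigcup_i\Gamma_i$: on $S_{\psi_h}$ one applies Theorem \ref{polyapprox} to the composed integrand $(a,b,\nu)\mapsto\phi(a+w_h^+(\cdot),b+w_h^-(\cdot),\nu)$, using the uniform convergence of the traces of $w_h$ on compact subsets of $\Lin$; on the remaining portion lying on each $\Gamma_i$, the jump of $w_h$ is the prescribed constant $\pm 2\pi/\n$, and the trace convergence of $\psi_h$ (from the $SBV$ weak convergence and a standard tubular trace argument) together with dominated convergence closes this contribution.

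The main obstacle I expect is the rigorous construction of the smooth approximations $w_h$ carrying the exact prescribed jumps $\pm 2\pi/\n$ on each $\Gamma_i$ (not merely asymptotically correct jumps) while also converging in $H^1(\Lin)$; this requires a careful analysis in tubular neighborhoods of each cut, compatible with the cutoffs needed near $\partial B_\rho(x_i)$ and $\partial\Omega$. A secondary but nontrivial point is the handling of the $\Gamma_i$-contribution in (iii), since Theorem \ref{polyapprox} produces polyhedral approximations of $S_\psi$ that generically do not coincide with $\Gamma_i$; one must therefore argue separately that the traces of $\psi_h$ on each side of $\Gamma_i$ converge $\mathcal H^1$-a.e.\ to those of $\psi$, up to extracting a subsequence, and combine this with the uniform convergence of $w_h$ to pass the bounded continuous integrand to the limit.
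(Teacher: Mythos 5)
Your construction of $w$ and $\psi$ is essentially the paper's: the paper lifts $u$ to $\bar\ffi\in SBV$ via D\'avila--Ignat and takes $w$ as a potential of $\nabla\bar\ffi$ on $\Lin$, then reads $\psi$ off a Caccioppoli partition of $\bar\ffi-w$; your multiplicative version ($w:=\tilde\theta/\n$ from a lifting of $u^\n$, and $\psi$ from the $\mathcal R$-valued map $ue^{-iw}$) is an equivalent route and the verification of \eqref{2pijumps} is fine. The approximation part, however, has two genuine gaps, both of which you partly flag but do not close. First, the smooth approximants $w_h$ with \emph{exact} jump $[w_h]=[w]$: your plan hinges on a reference function $w_\lin$, smooth on $\Lin$, with exact constant jump $\pm\frac{2\pi}{\n}$ across each $\Gamma_i$ and no other jumps; a ``tubular chart plus cutoff'' cannot produce this (a cutoff transverse to $\Gamma_i$ either kills the jump or creates spurious jumps on the lateral boundary of the tube, and the angular function $\theta_i$ may have its branch cut re-entering $\Omega$ beyond $p_i$). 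The paper sidesteps this entirely: since $[\n w]\in 2\pi\Z$, the map $z=e^{i\n w}$ lies in $H^1(\Omega_\rho(\mu);\mathcal S^1)$ with no cut; one approximates $z$ by smooth $\mathcal S^1$-valued maps $z_h$ (Schoen--Uhlenbeck), which for large $h$ have the correct degrees, lifts $z_h=e^{i\zeta_h}$ on the simply connected $\Lin$, and sets $w_h:=\zeta_h/\n$ (suitably normalized); the exact jump then comes for free from the degree. You should adopt this or an equivalent device.

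Second, and more seriously, property (iii) does not follow from applying Theorem \ref{polyapprox} directly on $\Omega_\rho(\mu)$. The integrand in (iii) sees the two one-sided traces of $\ffi_h=\psi_h+w_h$ on each $\Gamma_i$, and weak $SBV$ convergence together with $|D\psi_h|(\Omega_\rho(\mu))\to|D\psi|(\Omega_\rho(\mu))$ does \emph{not} control the traces of $\psi_h$ on a fixed \emph{interior} segment: the polyhedral jump sets $S_{\psi_h}$ may accumulate onto $\Gamma_i$, so that the traces of $\psi_h$ on $\Gamma_i$ fail to converge to $\psi^\pm$ (strict convergence yields trace convergence only on the boundary of the domain on which it holds, cf.\ \cite[Theorem 2.11]{G}). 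This is exactly why the paper applies the density theorem only on $\Omega^\Gamma_{\rho,\delta,\eta}(\mu)$ (the domain minus thin rectangles around the cuts) and then \emph{extends by reflection} across the lines $L_i^{\mathrm{right}}(\delta)$, $L_i^{\mathrm{left}}(\delta)$: the reflected approximants have traces on $\Gamma_i$ equal to traces of $\psi_{h,\delta,\eta}$ on $L_i^{\pm}(2\delta)$, where Giusti's theorem applies, and one then sends $\delta\to0$ and $\eta\to0$ with explicit error terms. This reflection construction is the key missing idea in your proposal; the ``standard tubular trace argument, up to a subsequence'' you invoke is not available without it. A smaller inaccuracy: $w_h\to w$ in $H^1(\Lin)$ does not give uniform convergence on compact subsets in two dimensions, so the $x$-dependent integrand $\phi(a+w_h^+,b+w_h^-,\nu)$ cannot be handled that way; in the intended applications $\phi$ depends only on $a-b$ and $\nu$, which is how this issue is resolved.
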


\begin{proof}
We will prove the lemma in five steps.

%with $\|\ffi\|_{BV}\le 2\|u\|_{BV}$
{\it Step 1: Construction of $w$ and $\psi$.} Let $\bar\ffi\in SBV^2_\loc(\Omega\setminus\bigcup_{i=1}^M\{x_i\})\cap SBV(\Omega)$ be a lifting of $u$. Such a lifting always exists in virtue of \cite[Remark 4]{DI}.
By \cite[Chapter 1]{BBH}, we have that $\nabla\bar\ffi$ is a conservative vector field on $\Lin$, and hence there exists a function $w\in H^1(\Lin)$ such that $\nabla w=\nabla\bar\ffi\res \Lin$. Moreover, using that $Ju^\n=\mu$ one can easily show that for any $i=1,\ldots,M$ 
\begin{equation}\label{jumpwffi}
[w]\res\Gamma_i=\pm\frac{2\pi}{\n}d_i,
\end{equation}
the sign depending on the orientation of the normal to $\Gamma_i$.
Since $\nabla(\bar\ffi-w)=0$ in $\Lin$, there exists a Caccioppoli partition $\{U_l\}_{l\in\N}$ of $\Omega_\rho(\mu)$ subordinated to $S_{\bar\ffi}\cup\bigcup_{i=1}^M\Gamma_i$, such that
\begin{equation}\label{H1Ai}
\bar\ffi-w=\sum_{l\in\N}c_{l}\chi_{U_{l}}\quad\textrm{ in }\Omega_\rho(\mu),
\end{equation}
for some $c_l\in\R$.
By \eqref{jumpwffi} and by the fact that $u^\n\in H^1(\Omega_\rho(\mu);\mathcal S^1)$, it follows that there exists a constant $c$ such that $c_l-c\in\frac{2\pi}{\n}\Z$  for any $l\in\N$.
Up to replacing $w$ with $w+c$, we can always assume that $c_l\in\frac{2\pi}{\n}\Z$.
%We now remove all $2 \pi \Z$-valued jumps of $\ffi - w$ by
For any $j=1,\ldots,\n-1$ we set
$$
E_{j}:=\bigcup_{l\in\N\,:\,c_l\in\frac{2\pi}{\n}j+2\pi\Z}U_l\,.
$$
Set moreover $\psi:=\sum_{j=1}^{\n-1}\frac{2\pi}{\n}j\chi_{E_j}$. By construction
\begin{equation}\label{limsupfinjump}
\mathcal H^1(S_\psi)\le \mathcal H^1(S_u)+\sum_{i=1}^M\mathcal H^1(\Gamma_i)<+\infty.
\end{equation}
Moreover, again by construction, $\psi+w-\bar\ffi\in2\pi\Z$; therefore, $\ffi:=\psi+w$ is a lifting of $u$,  and \eqref{2pijumps} is satisfied.

{\it Step 2: Approximation  of $w$.} 
By \eqref{jumpwffi}, the function $z:= e^{i\n w}$ belongs to $H^1(\Om_\rho(\mu);\mathcal{S}^1)$. Therefore, by \cite{SU}, there exists $\{z_h\}\subset C^\infty(\Omega_\rho(\mu);\mathcal S^1)$ such that $z_h\to z$ in $H^1(\Omega_\rho(\mu);\R^2)$. It is well known (see for instance \cite{B}) that for $h$ sufficiently large $\deg(z_h,\partial B_\rho(x_i))=d_i$ for any $i=1,\ldots,M$. Since $\Omega$ is simply connected, also $\Omega_\rho^\Gamma(\mu)$ is and hence there exists $\{\zeta_h\}\subset C^\infty(\Lin)$ such that $z_h=e^{i\zeta_h}$. Recalling that $\|z_h-z\|_{H^1(\Omega_\rho(\mu);\R^2)}\to 0$, we have that $\|z_h\bar z-1\|_{H^1(\Omega_\rho(\mu);\C)}\to 0$ (where we have identified $\R^2$ with  $\C$) and hence
$$
\|\nabla (\zeta_h-\n w)\|_{L^2(\Omega_\rho(\mu);\C)}=\|\nabla(z_h\bar z)\|_{L^2(\Omega_\rho(\mu);\C)}\to 0;
$$
this fact, combined with Poincar\'e inequality, yields
$$
\left\|\zeta_h-\fint_{\Lin}\zeta_h+\n\fint _{\Lin} w-\n w\right\|_{L^2(\Omega_\rho(\mu);\R^2)}\to 0.
$$
For any $h\in\N$ set
$$
w_h:=\frac{\zeta_h-\fint_{\Lin}\zeta_h} \n+\fint_{\Lin} w;
$$
then, $\{w_h\}\subset C^\infty(\Lin)$, $[w_h]=[w]$ and (i) is satisfied.

{\it Step 3: Approximation of $\psi$}. For every $i=1,\ldots,M$, we denote by $p_i$ the intersection point of $\Gamma_i$ with $\partial\Omega$ and by $n_i$ a (fixed) unit normal vector to $\Gamma_i$. 
%Fix $\eta>0$ such that, without loss of generality, the set $\Omega\setminus \bigcup_{i=1}^M B_\eta(p_i)$ has Lipschitz continuous boundary.
Fix $\eta>0$.  For any $\xi>0$ and for any $i=1,\ldots,M$, we set
\begin{align*}
&\Rect_i(\xi,\eta):=\{\textstyle x_i+an_i+b\frac{p_i-x_i}{|p_i-x_i|}\,:\,-\xi\le a\le \xi,\,\rho+\eta\le b\le \mathcal H^1(\Gamma_i)-\eta\},\\
&L^\rig_i(\xi):=\{\textstyle x_i + \xi n_i+b\frac{p_i-x_i}{|p_i-x_i|}\,:\,\rho+\eta\le b\le \mathcal H^1(\Gamma_i)-\eta\},\\
& L^\lef_i(\xi):=\{\textstyle x_i - \xi n_i+b\frac{p_i-x_i}{|p_i-x_i|}:\,\rho+\eta\le b\le \mathcal H^1(\Gamma_i)-\eta\}.
\end{align*}

%&\Rect_i(\delta,\eta)&:=\{\textstyle x_i+an_i+b\frac{p_i-x_i}{|p_i-x_i|}\,:\,-\delta\le a\le \delta,\,\rho+\eta\le b\le \mathcal H^1(\Gamma_i)-\eta\},\\
%&\Rect_i(2\delta,\eta)&:=\{\textstyle x_i+an_i+b\frac{p_i-x_i}{|p_i-x_i|}\,:\,-2\delta\le a\le 2\delta,\,\rho+\eta\le b\le \mathcal H^1(\Gamma_i)-\eta\}.
%\end{eqnarray*}
Moreover, for any $\delta>0$ we set
$$
\Omega^\Gamma_{\rho,\delta,\eta}(\mu):=\Omega^\Gamma_\rho(\mu)\setminus\bigcup_{i=1}^M \Rect_i(\delta,\eta).
$$

We notice that for  $\delta>0$ small enough,  $\Rect_i(2\delta,\eta)$ are pairwise disjoint and contained in $\Omega_\rho(\mu)$ and  $\partial\Omega^\Gamma_{\rho,\delta,\eta}(\mu)$ is Lipschitz continuous.
%the following facts hold true:
%
%{\it Fact 1}: $\Rect_i(2\delta,\eta)$ are pairwise disjoint and contained in $\Omega_\rho(\mu)$,
%
%{\it Fact 2}:  $\partial\Omega^\Gamma_{\rho,\delta,\eta}(\mu)$ is Lipschitz continuous,
%
%{\it Fact 3}:  $\mathcal H^1(\Gamma_i\setminus \Rect_i(\delta,\eta))\le\rho+2\eta$,
%
%%{\it Fact 4}:  $|D\psi|( L^\rig_i(\delta))=|D\psi|( L^\lef_i(\delta))=0$ for any $i=1,\ldots,M$,
%
%where Fact 1 and Fact 3 follow directly by construction and Fact 2 is a consequence of Fact 1.
%% and Fact 4  is a consequence of \eqref{limsupfinjump}.
%($\mathcal H^1(S_\psi)<\infty$)
%$$
%\gamma_i:=\{x_i+(p_i-x_i)\,t\,:\,t\in\R^+\}.
%$$
%For any $\delta>0$ and for every $i=1,\dots, M$ we define the stripe $S_i^{\rig}(\delta)$ as the closure of the connected component of
%$$
% ([x_i , x_i + \delta n_i] + \gamma_i)\cap\Omega_r(\mu)
%$$
%containing $\Gamma_i$. Analogously we define $S^i_\lef(\delta)$ as the closure of the connected component of
%$$
% ([x_i-\delta n_i , x_i] +\gamma_i)\cap\Omega_r(\mu)
%$$
%containing $\Gamma_i$, and we set $S_i(\delta):=S_i^{\rig}(\delta)\cup S_i^{\lef}(\delta)$.
%Moreover, we set
%$$
%A_{r,\delta}(\mu):=A_r(\mu)\setminus\bigcup_{i=1}^M S_i(\delta).
%$$
%Let $\delta>0$ be such that Facts 1, 2, 3 and 4 are satisfied. 
Then, by applying Theorem \ref{polyapprox} with  $A=\Omega^\Gamma_{\rho,\delta,\eta}(\mu)$, $g=\psi\res\Omega^\Gamma_{\rho,\delta,\eta}(\mu)$, and $\mathcal Z = \{\frac{2j \pi}{\n}\,:\,j = 1, \dots, \n-1\}$, we have that there exists a sequence $\{\psi_{h,\delta,\eta}\}\subset SBV(\Omega^\Gamma_{\rho,\delta,\eta}(\mu); \mathcal Z)$ such that, for any $h\in\N$, $S_{\psi_{h,\delta,\eta}}$ is polyhedral  and  the following properties are satisfied
\begin{equation}\label{strongl1}
\lim_{h\to\infty}\|\psi_{h,\delta,\eta} - \psi\|_{L^1(\Omega^\Gamma_{\rho,\delta,\eta}(\mu))}=0,
\end{equation}
\begin{equation}\label{weakconvmeas}
 D\psi_{h,\delta,\eta}\weakstar D(\psi\res \Omega^\Gamma_{\rho,\delta,\eta}(\mu))\qquad\textrm{as }h\to\infty, 
\end{equation}
and 
\begin{multline}
	\label{energyconvergence}
	\lim_{h\to\infty} \int_{S_{\psi_{h,\delta,\eta}}} \phi( \psi_{h,\delta,\eta}^+, \psi_{h,\delta,\eta}^-,\nu_{\psi_{h,\delta,\eta}}) \ud \mathcal H^1 \\
= \int_{S_\psi\cap \Omega^\Gamma_{\rho,\delta,\eta}(\mu)} \phi(\psi^+, \psi^-,\nu_\psi) \ud \mathcal H^1
\end{multline}
for all continuous $\phi \colon  \mathcal Z \times \mathcal Z\times \mathcal S^1  \to \R^+$ satisfying $\phi( a , b,\nu) = \phi( b, a,-\nu)$.
%Furthermore we notice that, as $h\to\infty$,
%\begin{equation}\label{traceconv}
%\psi_{h,\delta,\eta}^\pm\to\psi^\pm\qquad\mathcal H^1-\textrm{a.e. on }\bigcup_{i=1}^M (L_i^\rig(\delta)\cup L_i^\lef(\delta)).
%\end{equation}
%Indeed, by \eqref{weakconvmeas} and by Fact 3,  it follows that for any $i=1,\ldots,M$ 
%\begin{align*}
%&\lim_{h\to\infty}|D\psi_{h,\delta,\eta}|(L^\rig_i(\delta))=|D\psi|( L^\rig_i(\delta)),\\
%&\lim_{h\to\infty}|D\psi_{h,\delta,\eta}|(L^\lef_i(\delta))=|D\psi|( L^\lef_i(\delta)).
%\end{align*}
% By \cite[Theorem 2.11]{G}, this fact, combined with \eqref{strongl1},  implies 
%\begin{equation*}
%\lim_{h\to\infty}\|\psi_{h,\delta,\eta}^\pm-\psi^\pm\|_{L^1(\bigcup_{i=1}^M (L_i^\rig(\delta)\cup L_i^\lef(\delta))}=0,
%\end{equation*}
%and hence \eqref{traceconv} holds true, since  $\psi_{h,\delta,\eta}$ and $\psi$ take values in the discrete set $\mathcal Z$.

We  now extend $\psi_{h,\delta,\eta}$ from $\Omega^\Gamma_{\rho,\delta,\eta}(\mu)$ to $\Omega_\rho(\mu)$ by reflection along the segments $L_i^\rig(\delta)$ and $L_i^\lef(\delta)$ in the following manner (see also Figure \ref{reflection}).
\begin{figure}[h!]
	\centering
	\def\svgwidth{0.5\textwidth}
	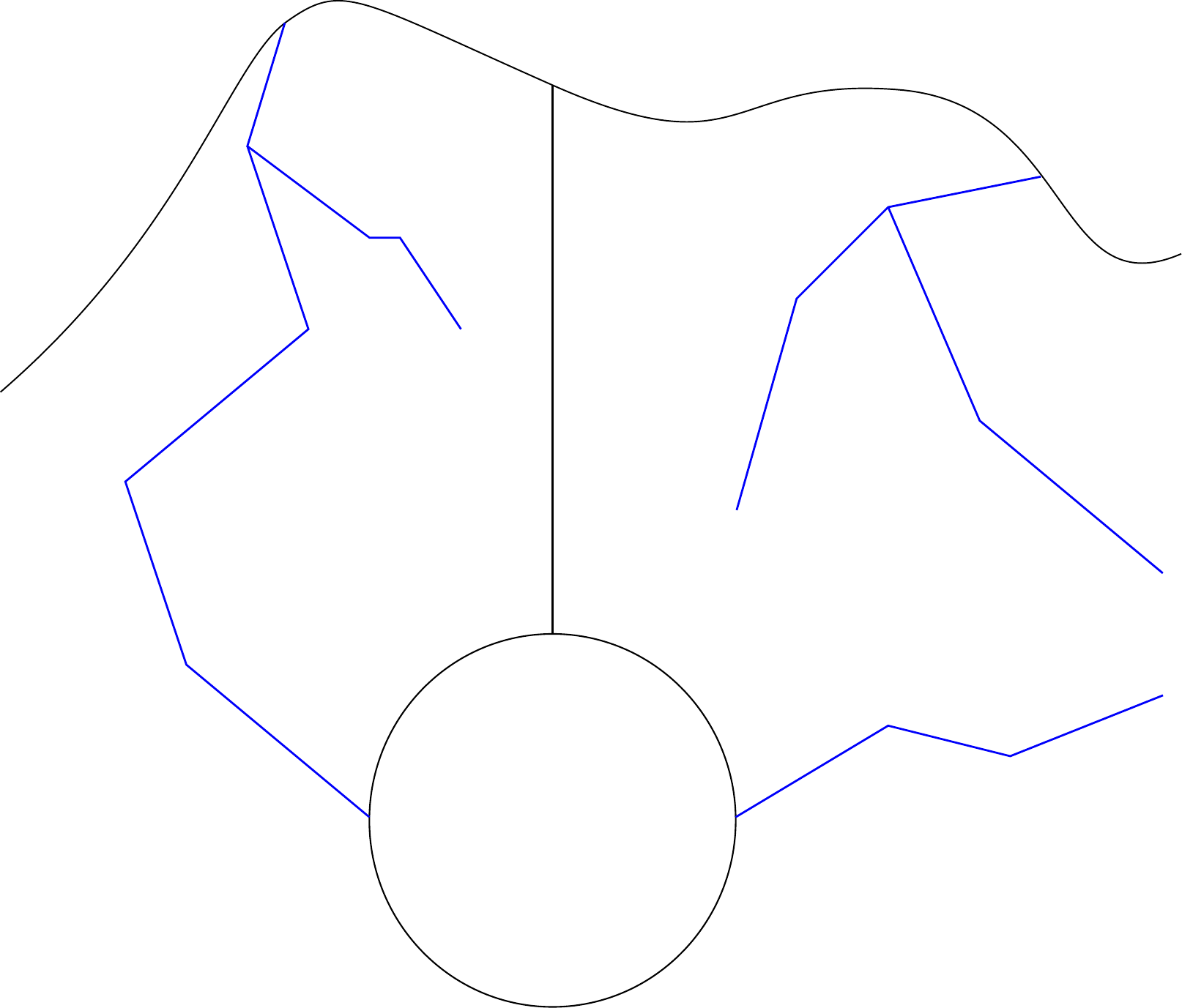
	\caption{Extension of $\psi_{h,\delta,\eta}$ into $\Omega_\rho(\mu)$ by reflection.}
\label{reflection}
\end{figure}
Set 
\begin{eqnarray*}
&\Rect^\rig_i(\delta,\eta):=\{\textstyle x_i+an_i+b\frac{p_i-x_i}{|p_i-x_i|}\,:\,0\le a\le \delta,\,\rho+\eta\le b\le \mathcal H^1(\Gamma_i)-\eta\},\\
&\Rect^\lef_i(\delta,\eta):=\{\textstyle x_i+an_i+b\frac{p_i-x_i}{|p_i-x_i|}\,:\,-\delta\le a\le 0,\,\rho+\eta\le b\le \mathcal H^1(\Gamma_i)-\eta\},
\end{eqnarray*}
and let $R_{i,\delta}^\rig(x):\Rect^\rig_{i}(\delta,\eta)\to \Rect_{i}(2\delta,\eta)\setminus \Rect_{i}(\delta,\eta)$ be defined by
\begin{equation}
R_{i,\delta}(x):=x+2\di(x, L^\rig_{i}(\delta))n_i.
\end{equation}
Analogously, we define the reflection map $R_{i,\delta}^\lef$.
We define the extensions $\bar \psi_{h,\delta,\eta}$ of $\psi_{h,\delta,\eta}$ in $\Omega_\rho(\mu)$ as 
$$
	 \bar\psi_{h,\delta,\eta}(x) := 
	\begin{cases}
		\psi_{h,\delta,\eta}(x) &\text{if } x \in \Omega^\Gamma_{\rho,\delta,\eta}(\mu), \\
		\psi_{h,\delta,\eta}(R_{i,\delta}^\rig(x)) &\text{if } x \in \Rect^\rig_i(\delta,\eta), \\
		\psi_{h,\delta,\eta}(R_{i,\delta}^\lef(x)) &\text{if } x \in \Rect^\lef_i(\delta,\eta).
	\end{cases}
$$ 

It is easy to see that $\bar\psi_{h,\delta,\eta}\in SBV(\Omega_\rho(\mu))$ and that $S_{ \bar\psi_{h,\delta,\eta}}$ is polyhedral. Let $\bar\psi^+_{h,\delta,\eta}$ (resp., $\bar\psi^-_{h,\delta,\eta}$) denote the trace of $\bar\psi_{h,\delta,\eta}$ on $\bigcup_{i=1}^M L^\rig_i(2\delta)$ (resp., $\bigcup_{i=1}^M L^\lef_i(2\delta)$ ).
By Theorem \ref{polyapprox} $|D\psi_{h,\delta,\rho}|\to |D(\psi\res\Omega_{\rho,\delta,\eta}^\Gamma(\mu))|$ and hence, (see for instance \cite[Theorem 2.11]{G}), we obtain
\begin{equation}\label{3i}
\lim_{h\to\infty}\|\bar\psi_{h,\delta,\eta}^+-\psi^+\|_{L^1(\bigcup_{i=1}^M L_i^\rig(2\delta))}=0,\quad \lim_{h\to\infty}\|\bar\psi_{h,\delta,\eta}^- -\psi^-\|_{L^1(\bigcup_{i=1}^M L_i^\lef(2\delta))}=0.
\end{equation}
Moreover, we also have that for any $i=1,\ldots,M$
\begin{equation}\label{3i1}
\lim_{\delta\to 0}\|\psi(\cdot\pm2\delta n_i)-\psi^\pm(\cdot)\|_{L^1(\Gamma_i\cap\Rect_i(\delta,\eta))}=0.
\end{equation}

%Moreover, set  $I_{i,\delta}:=\left[x_i+\rho\frac{p_i-x_i}{|p_i-x_i|},x_i+(\rho+l_{i,\delta})\frac{p_i-x_i}{|p_i-x_i|}\right]$ for any $i=1,\ldots,M$; by construction and by \eqref{traceconv}, we obtain
%\begin{equation}\label{3i}
%|\bar \psi^\pm_{h,\delta,\eta}-\psi^\pm|\to 0\qquad\mathcal H^1-\textrm{a.e. on }\bigcup_{i=1}^M I_{i,\delta},
%%\limsup_{\delta\to 0}\limsup_{h\to\infty}\|\bar\psi^\pm_{h,\delta}-\psi^\pm\|_{L^1(\Gamma_i)}=0.
%\end{equation}
%and hence 
%\begin{equation}\label{normals}
%\nu_{\bar\psi_{h,\delta,\eta}}\to\nu_\psi\qquad\mathcal H^1-\textrm{a.e. in }\bigcup_{i=1}^M I_{i,\delta}.
%\end{equation}

Set $\ffi_{h,\delta,\eta}:=\bar\psi_{h,\delta,\eta}+w_h$.
By construction  and by \eqref{strongl1}, 
\begin{multline*}
	\limsup_{h \to \infty} \|\bar\psi_{h,\delta,\eta} - \psi\|_{L^1(\Omega_\rho(\mu))} \le\limsup_{h \to \infty} \|\bar\psi_{h,\delta,\eta} - \psi\|_{L^1(\cup_{i=1}^M \Rect_i(\delta,\eta))}\\
\leq \frac{2(\n-1)\pi}{\n} \sum_{i = 1}^M |\Rect_i(\delta,\eta)| \leq C\,\delta,
\end{multline*}
for some  constant $C>0$ depending on $M$, $\n$ and $\Omega$. Therefore 
\begin{equation}\label{l1conv}
\lim_{\eta\to 0}\lim_{\delta\to 0}\lim_{h\to\infty}\|\bar\psi_{h,\delta,\eta} - \psi\|_{L^1(\Omega_\rho(\mu))}=0.
\end{equation}

{\it Step 4: Convergence of the surface energy.}
In this step we prove that
\begin{eqnarray} \label{jumpenconv}
&&\lim_{\eta\to 0}\lim_{\delta\to 0}\lim_{h\to\infty}\int_{S_{\ffi_{h,\delta,\eta}}}\phi(\ffi_{h,\delta,\eta}^+,\ffi_{h,\delta,\eta}^-,\nu_{\ffi_{h,\delta,\eta}})\ud\mathcal H^1=\int_{S_{\ffi}}\phi(\ffi^+,\ffi^-,\nu_{\ffi})\ud\mathcal H^1.
\end{eqnarray}

Now we pass to the proof of \eqref{jumpenconv}.
Let $\phi$ be as in (iii).
Since by construction $w_h,\,w\in H^1(\Omega_\rho^\Gamma(\mu))$, in order to show \eqref{jumpenconv}, it is sufficient to prove
\begin{eqnarray}\label{jumpenconv1}
\lim_{\eta\to 0}\lim_{\delta\to 0}\lim_{h\to\infty}\int_{S_{\bar\psi_{h,\delta,\eta}}\setminus(\bigcup_{i=1}^M \Gamma_i)}\phi(\bar\psi_{h,\delta,\eta}^+,\bar\psi_{h,\delta,\eta}^-,\nu_{\bar\psi_{h,\delta,\eta}})\ud\mathcal H^1\\ \nonumber
=\int_{S_{\psi}\setminus(\bigcup_{i=1}^M\Gamma_i)}\phi(\psi^+,\psi^-,\nu_{\psi})\ud\mathcal H^1,
\\ \label{jumpenconv2}
\lim_{\eta\to 0}\lim_{\delta\to 0}\lim_{h\to\infty}\int_{S_{\ffi_{h,\delta,\eta}}\cap(\bigcup_{i=1}^M \Gamma_i)}\phi(\ffi_{h,\delta,\eta}^+,\ffi_{h,\delta,\eta}^-,\nu_{\ffi_{h,\delta,\eta}})\ud\mathcal H^1\\ \nonumber
=\int_{S_{\ffi}\cap(\bigcup_{i=1}^M \Gamma_i)}\phi(\ffi^+,\ffi^-,\nu_{\ffi})\ud\mathcal H^1.
\end{eqnarray}

We first show \eqref{jumpenconv1}. Set $\mathcal Z := \{\frac{2l\pi}{\n}\,:\, l=1, \dots, \n-1\}$. Since $\bar\psi_{h,\delta,\eta}$ takes values in $\mathcal Z$, using \eqref{energyconvergence}, we obtain
\begin{multline*}
\limsup_{h\to\infty}\left|\int_{S_{\bar\psi_{h,\delta,\eta}}\setminus(\bigcup_{i=1}^M \Rect_i(\delta,\eta))}\phi(\bar \psi_{h,\delta,\eta}^+, \bar\psi_{h,\delta,\eta}^-,\nu_{\bar \psi_{h,\delta,\eta}}) \ud \mathcal H^1\right.\\
\left.-\int_{S_{\psi}\setminus(\bigcup_{i=1}^M \Rect_i(\delta,\eta))}\phi(\psi^+, \psi^-,\nu_{ \psi}) \ud \mathcal H^1\right|=0.
\end{multline*}
By triangular inequality, it follows that
\begin{multline}\label{0}
\limsup_{h\to\infty}\left|\int_{(S_{\bar\psi_{h,\delta,\eta}}\setminus\bigcup_{i=1}^M\Gamma_i)\setminus(\bigcup_{i=1}^M \Rect_i(\delta,\eta))}\phi(\bar \psi_{h,\delta,\eta}^+, \bar\psi_{h,\delta,\eta}^-,\nu_{\bar \psi_{h,\delta,\eta}}) \ud \mathcal H^1\right.\\
-\left.\int_{(S_\psi\setminus\bigcup_{i=1}^M\Gamma_i)\setminus(\bigcup_{i=1}^M \Rect_i(\delta,\eta))}\phi(\psi^+, \psi^-,\nu_{\psi}) \ud \mathcal H^1\right|\\
%\le\limsup_{h\to\infty}\left|\int_{(S_{\bar\psi_{h,\delta,\eta}}\setminus\bigcup_{i=1}^M\Gamma_i)\setminus(\bigcup_{i=1}^M \Rect_i(\delta,\eta))}\phi(\bar \psi_{h,\delta,\eta}^+, \bar\psi_{h,\delta,\eta}^-,\nu_{\bar \psi_{h,\delta,\eta}}) \ud \mathcal H^1\right.\\
%-\left.\int_{S_{\bar\psi_{h,\delta,\eta}}\setminus(\bigcup_{i=1}^M \Rect_i(\delta,\eta))}\phi(\bar \psi_{h,\delta,\eta}^+, \bar\psi_{h,\delta,\eta}^-,\nu_{\bar \psi_{h,\delta,\eta}}) \ud \mathcal H^1\right|\\
%+\limsup_{h\to\infty}\left|\int_{S_{\bar\psi_{h,\delta,\eta}}\setminus(\bigcup_{i=1}^M \Rect_i(\delta,\eta))}\phi(\bar \psi_{h,\delta,\eta}^+, \bar\psi_{h,\delta,\eta}^-,\nu_{\bar \psi_{h,\delta,\eta}}) \ud \mathcal H^1\right.\\
%\left.-\int_{S_{\psi}\setminus(\bigcup_{i=1}^M \Rect_i(\delta,\eta))}\phi(\psi^+, \psi^-,\nu_{ \psi}) \ud \mathcal H^1\right|\\
%+\left|\int_{S_{\psi}\setminus(\bigcup_{i=1}^M \Rect_i(\delta,\eta))}\phi(\psi^+, \psi^-,\nu_{ \psi}) \ud \mathcal H^1\right.\\
%\left.-\int_{(S_{\psi}\setminus \bigcup_{i=1}^M \Gamma_i)\setminus(\bigcup_{i=1}^M \Rect_i(\delta,\eta))}\phi(\psi^+, \psi^-,\nu_{ \psi}) \ud \mathcal H^1\right|\\
\le \limsup_{h\to\infty}\int_{\bigcup_{i=1}^M(\Gamma_i\setminus\Rect_i(\delta,\eta))}(\phi(\bar \psi_{h,\delta,\eta}^+, \bar\psi_{h,\delta,\eta}^-,\nu_{\bar \psi_{h,\delta,\eta}}) +\phi(\psi^+, \psi^-,\nu_{ \psi})) \ud \mathcal H^1
\le r_0(\eta),
\end{multline}
with $\lim_{\eta\to 0}r_0(\eta)$=0.

%\limsup_{h\to\infty}\int_{\bigcup_{i=1}^M(\Gamma_i\setminus\Rect_i(\delta,\eta))}\phi(\bar \psi_{h,\delta,\eta}^+, \bar\psi_{h,\delta,\eta}^-,\nu_{\bar \psi_{h,\delta,\eta}}) \ud \mathcal H^1\\
%+\int_{\bigcup_{i=1}^M (\Gamma_i\setminus\Rect_i(\delta,\eta))}\phi(\psi^+, \psi^-,\nu_{ \psi}) \ud \mathcal H^1
%%\le r_0(\eta)

%\begin{multline}\label{1}
%\limsup_{h\to\infty}\left |\int_{S_{\bar\psi_{h,\delta,\eta}}\setminus(\bigcup_{i=1}^M\Gamma_i)}\phi(\bar \psi_{h,\delta,\eta}^+, \bar\psi_{h,\delta,\eta}^-,\nu_{\bar \psi_{h,\delta,\eta}}) \ud \mathcal H^1\right.\\
%\left.-\int_{S_\psi\setminus(\bigcup_{i=1}^M\Gamma_i)} \phi(\psi^+, \psi^-,\nu_{\psi}) \ud \mathcal H^1\right|\\
%\le\limsup_{h\to\infty}\sum_{i=1}^M\left|\int_{S_{\bar\psi_{h,\delta,\eta}}\cap(R_i(\delta,\eta)\setminus\Gamma_i)}\phi(\bar \psi_{h,\delta,\eta}^+, \bar\psi_{h,\delta,\eta}^-,\nu_{\bar \psi_{h,\delta,\eta}}) \ud \mathcal H^1\right.\\
%\left. -\int_{S_\psi\cap (\Rect_i(\delta,\eta)\setminus\Gamma_i)} \phi(\psi^+, \psi^-,\nu_{\psi}) \ud \mathcal H^1\right|.
%\end{multline}

For any $i=1,\ldots,M$, we set $I_{i}(\delta,\eta):=\partial \Rect_i(\delta,\eta)\setminus(L_i^\rig(\delta)\cup L_i^\lef(\delta))$
and we denote by $\Int_i(\delta,\eta)$ (resp., $\Int_i(2\delta,\eta)$)  the interior of $\Rect_i(\delta,\eta)\setminus \Gamma_i$ (resp., $\Rect_i(2\delta,\eta)\setminus \Gamma_i$ ). Fix $i=1,\ldots,M$.
Again by construction and by \eqref{energyconvergence}, we have that 
\begin{multline}\label{primo}
\limsup_{h\to\infty}\int_{S_{\bar\psi_{h,\delta,\eta}}\cap \Int_{i}(\delta,\eta)}\phi(\bar \psi_{h,\delta,\eta}^+, \bar\psi_{h,\delta,\eta}^-,\nu_{\bar \psi_{h,\delta,\eta}}) \ud \mathcal H^1\\
=\int_{S_\psi\cap(\Int_i(2\delta,\eta)\setminus \Int_i(\delta,\eta))} \phi(\psi^+, \psi^-,\nu_{\psi}) \ud \mathcal H^1\\
\le \int_{S_\psi\cap \Int_i(2\delta,\eta)} \phi(\psi^+, \psi^-,\nu_{\psi}) \ud \mathcal H^1=:r_1(\delta).
\end{multline}
with $\lim_{\delta\to 0}r_1(\delta)=0$.
Moreover, since by construction $[\bar\psi_{h,\delta,\eta}]=0$ on $L^\rig_i(\delta)\cup L^\lef_i(\delta)$, we have
\begin{multline}\label{secondo}
\limsup_{h\to\infty}\int_{S_{\bar\psi_{h,\delta,\eta}}\cap \partial\Rect_{i}(\delta,\eta)}(\phi(\bar \psi_{h,\delta,\eta}^+, \bar\psi_{h,\delta,\eta}^-,\nu_{\bar \psi_{h,\delta,\eta}}) +\phi(\psi^+,\psi^-,\nu_\psi))\ud \mathcal H^1\\
=\limsup_{h\to\infty}\int_{S_{\bar\psi_{h,\delta,\eta}}\cap I_{i}(\delta,\eta)}(\phi(\bar \psi_{h,\delta,\eta}^+, \bar\psi_{h,\delta,\eta}^-,\nu_{\bar \psi_{h,\delta,\eta}}) +\phi(\psi^+,\psi^-,\nu_\psi))\ud \mathcal H^1\\
\le C\delta=:r_2(\delta)
\end{multline}
with $\lim_{\delta\to 0}r_2(\delta)=0$.
By \eqref{primo} and \eqref{secondo}, and by triangular inequality, it follows that
\begin{multline}\label{1}
\limsup_{h\to\infty}\left|\int_{S_{\bar\psi_{h,\delta,\eta}}\cap (\Rect_i(\delta,\eta)\setminus\Gamma_i)}\phi(\bar \psi_{h,\delta,\eta}^+, \bar\psi_{h,\delta,\eta}^-,\nu_{\bar \psi_{h,\delta,\eta}}) \ud \mathcal H^1\right.\\
\left.-
\int_{S_\psi\cap(\Rect_i(\delta,\eta)\setminus\Gamma_i)} \phi(\psi^+, \psi^-,\nu_{\psi}) \ud \mathcal H^1\right|\le r_1(\delta)+r_2(\delta)\to 0\quad\textrm{as }\delta\to 0.
\end{multline}
Then \eqref{jumpenconv1} follows by summing \eqref{0} and \eqref{1}.

Finally we prove \eqref{jumpenconv2}. Fix $i=1,\ldots,M$. By \eqref{3i} and \eqref{3i1}, we have
$$
\lim_{\delta\to 0}\lim_{h\to\infty}\|\ffi_h^\pm-\ffi^\pm\|_{L^1(\Gamma_i\cap\Rect_i(\delta,\eta))}=0;
$$
then, by Dominated Convergence Theorem, we obtain
\begin{multline}\label{canchange}
\limsup_{h\to\infty}\left|\int_{\Gamma_i\cap\Rect_i(\delta,\eta)}\phi(\ffi_{h,\delta,\eta}^+, \ffi_{h,\delta,\eta}^-,\nu_{\ffi_{h,\delta,\eta}}) \ud\mathcal H^1\right.\\
\left.-\int_{\Gamma_i\cap\Rect_i(\delta,\eta)}\phi(\ffi^+, \ffi^-,\nu_{\ffi}) \ud\mathcal H^1\right|\le r(\delta),
\end{multline}
with $\lim_{\delta\to 0}r(\delta)=0$.
It follows that
\begin{multline}\label{finished}
\limsup_{h\to\infty}\left|\int_{\Gamma_i}\phi(\ffi_{h,\delta,\eta}^+, \ffi_{h,\delta,\eta}^-,\nu_{\ffi_{h,\delta,\eta}}) \ud\mathcal H^1-\int_{\Gamma_i }\phi(\ffi^+, \ffi^-,\nu_{\ffi}) \ud\mathcal H^1\right|\\
\le \limsup_{h\to\infty}\left|\int_{\Gamma_i\cap\Rect_i(\delta,\eta)}\phi(\ffi_{h,\delta,\eta}^+, \ffi_{h,\delta,\eta}^-,\nu_{\ffi_{h,\delta,\eta}}) -\phi(\ffi^+, \ffi^-,\nu_{\ffi}) \ud\mathcal H^1\right|\\
+ \limsup_{h\to\infty}\int_{\Gamma_i\setminus\Rect_i(\delta,\eta)}(\phi(\ffi_{h,\delta,\eta}^+, \ffi_{h,\delta,\eta}^-,\nu_{\ffi_{h,\delta,\eta}}) +\phi(\ffi^+, \ffi^-,\nu_{\ffi}) )\ud\mathcal H^1\\
\le r(\delta)+4\|\phi\|_{L^\infty}\eta.
\end{multline}
By summing \eqref{finished} over $i=1,\ldots,M$, we obtain \eqref{jumpenconv2}.
This concludes the proof of \eqref{jumpenconv}.

{\it Step 5: Conclusion}
Using \eqref{l1conv} and \eqref{jumpenconv}, we show that (ii) and (iii) hold true.
By a standard diagonal argument in $h$, $\delta$ and $\eta$, 
there  exists  a sequence of functions $\psi_{h}:=\bar\psi_{h,\delta(h),\eta(h)}\in SBV(\Omega_\rho(\mu))$ with polyhedral jump set, such that $\ffi_h:=\psi_h+w_h$ satisfies (iii). Moreover (ii) follows by Theorem \ref{SBVComp}, up to extracting a further subsequence.
\end{proof}

\begin{remark}
We notice that  in Lemma \ref{bulklemma}(iii) the  assumption that $\phi$ is bounded can be replaced by requiring that
$\phi(a,b,\nu)=\Theta(|a-b|)\Phi(\nu)$, for some continuous functions $\Theta$ and $\Phi$. In fact, the former assumption is used only in the proof of \eqref{canchange}.
\end{remark}

Now we are in a position to prove the $\Gamma$-limsup inequality.
We recall that for any $u\in\D_M^\n$, $Ju^\n=\pi\sum_{i=1}^Md_i\delta_{x_i}$, with $|d_i|=1$ and $x_i\in\Omega$.  To ease the notations, for any $i=1,\ldots,M$, we set $\theta_i(\cdot):=\theta(\cdot-x_i)$, where $\theta$ is the angular polar coordinate (with respect to the origin).

\begin{proof}[Proof of Theorem \ref{mainthm}(iii)]
Without loss of generality we can assume that $\W(u^\n)<+\infty$ and hence there exists a constant $C>0$ such that for any $\sigma>0$
$$
\frac 1 2\int_{\Omega_\sigma(\mu)}|\nabla u^\n|^2\ud x\le M\pi|\log\sigma|+C.
$$
Fix $\sigma>0$ and let $C_{i,k}$ denote the annulus $B_{2^{-k}\sigma}(x_i)\setminus B_{2^{-k-1}\sigma}(x_i)$ for any $k\in\N$ and for any $i=1,\ldots,M$. 
 Arguing as in the proof of \cite[Theorem 4.5]{ADGP} (see also \cite{AP}), one can show that for any $k\in\N$ there exists a unitary vector $a_{i,k}$  such that
\begin{align} \nonumber
&\left\|u^\n-a_{i,k}\left(\frac{x-x_i}{|x-x_i|}\right)^{d_i}\right\|_{H^1(C_{i,k};\R^2)}=:r_1(i,k)\to 0\quad{ as}\,k\to\infty,\\ \label{annulest}
&\left\|\nabla(\n\ffi-d_i\theta_{i})\right\|_{L^2(C_{i,k};\R^2)}=:r_2(i,k)\to 0\quad{ as}\,k\to\infty,
\end{align}
for any lifting $\ffi$ of $u$ with $\ffi\in SBV(\Omega)\cap SBV^2_\loc(\Omega\setminus \bigcup_{i=1}^M\{x_i\})$ (a lifting like that exists by \cite[Remark 4]{DI}).
Fix $k\in\N$ and let $\{\psi_{k,h}\}_h$ and $\{w_{k,h}\}_h$ be the sequences constructed in Lemma \ref{bulklemma} for $\rho=2^{-k-2}\sigma$. 
For any $h\in\N$, set $\ffi_{k,h}:=\psi_{k,h}+w_{k,h}$. Let moreover $\ffi_k=\psi_k+w_k$ be the lifting of $u$ constructed in Lemma \ref{bulklemma} with $\rho=2^{-k-2}\sigma$.
By \eqref{annulest} and by Lemma \ref{bulklemma}(i), using also that $\nabla \ffi_{k}(x)=\nabla w_{k}(x)$ for a.e. $x\in \Omega_{2^{-k-2}\sigma}(\mu)$, it follows that 
\begin{multline}\label{annulest2}
\frac 1 2\int_{C_{i,k}}\left|\nabla \left(w_{k,h}-d_i\frac{\theta_i} \n\right)\right|^2\ud x
\le
\int_{C_{i,k}}\left|\nabla w_{k,h}-\nabla w_k\right|^2\ud x\\
+\int_{C_{i,k}}\left|\nabla \left(\ffi_{k}-d_i\frac{\theta_i} \n\right)\right|^2\ud x 
=r(i,k,h),
\end{multline}
with $\lim_{k\to\infty}\lim_{h\to \infty}r(i,k,h)=0$.
By construction and recalling that $[w_{k,h}]=[w_k]=\frac{2\pi}{\n}$, we have that $\nabla(w_{k,h}-\sum_{i=1}^M d_i\frac{\theta_i}{\n})$ is a conservative field in $\Omega_{2^{-k-2}\sigma}(\mu)$, and so for any $i=1,\ldots, M$ there exists a function $\xi_{i,k,h}\in C^\infty(B_{2^{-k}\sigma}(x_i)\setminus B_{2^{-k-2}\sigma}(x_i))$ with null average such that
$$
\nabla\xi_{i,k,h}=\nabla\left(w_{k,h}-d_i\frac{\theta_i}{\n}\right)\quad \textrm{in }B_{2^{-k}\sigma}(x_i)\setminus B_{2^{-k-2}\sigma}(x_i).
$$
By Poincar\'e inequality and by \eqref{annulest2}, we have
\begin{equation}\label{xiest}
\|\xi_{i,k,h}\|_{H^1(B_{2^{-k}\sigma}(x_i)\setminus B_{2^{-k-2}\sigma}(x_i))}\le C\,r(i,k,h),
\end{equation} %C_{i,j} must be redefined for this to hold. (r(i,j,h) is the error in B_{2^-j \sigma}(x_i) \setminus B_{2^{-j-1} \sigma}(x_i).
for some $C>0$ depending only on $\sigma$ (and not on $k$).
Let $\eta\in C^{1}\left([\frac 1 2,1];[0,1]\right)$ be such that $\eta(t)=1$ for $t\in[\frac 1 2,\frac 5 8]$ and $\eta(t)=0$ for $t\in[\frac 7 8, 1] $, and set
\begin{equation}\label{cutoff}
\bar\ffi_{i,k,h}(x):=\ffi_{k,h}(x)-\eta(2^{k}\sigma^{-1}|x-x_i|)\xi_{i,k,h}(x) \quad\textrm{ for any }x\in C_{i,k}.
\end{equation}
Set $g_{i,k,h}(y):=\bar\ffi_{i,k,h}(x_i+2^{-k}\sigma y)$.
By \eqref{xiest} and \eqref{cutoff}, it easily follows that
\begin{equation}\label{dausare}
\lim_{h\to\infty}\|\tilde g_{i,k,h}(y)-a_{i,k}(\textstyle \frac {y}{|y|})^{d_i}\|_{H^1(B_1\setminus B_{\frac 1 2}; \R^2)}=0.
\end{equation}

We define 
\begin{equation}\label{newffi}
\bar\ffi_{k,h}(x):=\left\{\begin{array}{ll}
\ffi_{k,h}(x)&\textrm{ if }x\in\Omega_{2^{-k}\sigma}(\mu)\\
\bar\ffi_{i,k,h}(x)&\textrm{ if }x\in C_{i,k}\\
\ffi_{k,h}(x)-\xi_{i,k,h}(x)&\textrm{ if }x\in B_{2^{-k-1}\sigma}(x_i)\setminus B_{2^{-k-2}\sigma}(x_i).
\end{array}\right. %The last two cases can be put into one by extending \eta with 1 in [0,\frac 1 2].
\end{equation}
%We need the second annulus because of discretization of the center discs!
Since $\nabla\ffi_{k,h}=\nabla w_{k,h}$, we have that for any $i=1,\ldots,M$
$$
\nabla\bar\ffi_{k,h}=\frac{d_i}{\n}\nabla{\theta_i}\qquad\textrm{ in } B_{2^{-k-1}\sigma}(x_i)\setminus B_{2^{-k-2}\sigma}(x_i),
$$
whence
$$
e^{i\n\bar\ffi_{k,h}}=e^{id_i\theta_i}\quad\textrm{ in } B_{2^{-k-1}\sigma}(x_i)\setminus B_{2^{-k-2}\sigma}(x_i).
$$
Moreover, for any $i=1,\ldots,M$, let $\bar\ffi_{i,k,h,\ep}\in\mathcal{AF}_\ep(B_{2^{-k-1}\sigma}(x_i))$ be a solution of the minimum problem $\gamma^{x_i}(\vep, 2^{-k-1}\sigma)$ with $\bar\ffi_{i,k,h,\ep}=\bar\ffi_{i,k,h}$ on $\partial_\ep B_{2^{-k-1}\sigma}(x_i)$.
By a standard density argument we can always assume that $\Omega_\ep^0\cap S_{\ffi_{k,h}}=\emptyset$, so we can set
$$
\ffi_{k,h,\ep}:=\left\{\begin{array}{ll}
\bar\ffi_{k,h}&\textrm{in }(\Omega_{2^{-k-1}\sigma}(\mu))_{\ep}^0\\
\bar\ffi_{i,k,h,\ep}&\textrm{in }(B_{2^{-k-1}\sigma}(x_i))_{\ep}^0.
\end{array}
\right.
$$
Set $u_{k,h,\ep}:=e^{i\ffi_{k,h,\ep}}$ and let $\hat{u}_{k,h,\ep}$ denote the piecewise affine interpolation of $u_{k,h,\ep}$ constructed according with \eqref{paiofu}. In order to conclude the proof it is enough to show 
\begin{align}\label{uconv}
&\limsup_{k\to\infty}\limsup_{h\to\infty}\limsup_{\ep\to 0}\|u-\hat u_{k,h,\ep}\|_{L^2(\Omega_\ep;\R^2)}=0,\\ \label{enconv}
&\limsup_{k\to\infty}\limsup_{h\to\infty}\limsup_{\ep\to 0}\F_\ep^{\nn}(\ffi_{k,h,\ep})-M\pi|\log\ep|=\F^{\nn}_0(u)\,;
\end{align}
indeed, by \eqref{uconv},\eqref{enconv} and Theorem \ref{SBVComp}, using a standard diagonal argument, there exists  a sequence $\{\ffi_{\ep}\}:=\{\ffi_{k(\ep), h(\ep),\ep}\}$  satisfying \eqref{gammalimsup} and with $\hat u_\ep\weakly u$ in $SBV^2_\loc(\Omega\setminus\cup_{i=1}^M\{x_i\};\R^2)$.

We now prove \eqref{uconv} and \eqref{enconv}.
To this purpose set
$$
T_{k,h,\ep}:=\bigcup_{\newatop {i\in  (\Omega_{2^{-k-1}\sigma}(\mu))_{\ep}^2}{(i+\ep Q)\cap S_{\bar\ffi_{k,h}}\neq\emptyset}}(i+\ep Q).
$$
By standard interpolation estimates (see for instance \cite{C}), we have
\begin{equation}
\limsup_{\ep\to 0}\|e^{i\ffi_{k,h}}-\hat u_{k,h,\ep}\|_{L^2((\Omega_{2^{-k}\sigma}(\mu))_\ep \setminus T_{k,h,\ep};\R^2)}=0,
\end{equation}
which combined with Lemma \ref{bulklemma} yields
\begin{equation}\label{almostclaim}
\limsup_{k\to\infty}\limsup_{h\to\infty}\limsup_{\ep\to 0}\|u-\hat u_{k,h,\ep}\|_{L^2((\Omega_{2^{-k}\sigma}(\mu))_\ep \setminus T_{k,h,\ep};\R^2)}=0.
\end{equation}
Moreover, by Lemma \ref{bulklemma}, (for $h$ sufficiently large) 
\begin{equation}\label{saltolimitato}
\mathcal H^1(S_{\bar\ffi_{k,h}})\le C\quad\textrm{ so that }\quad\lim_{\ep\to 0}|T_{k,h,\ep}|=0.
\end{equation}
Since $\|u-\hat u_{k,h,\ep}\|_{L^\infty}\le 2$, by \eqref{saltolimitato} 
\begin{multline}\label{negligclaim}
\|u-\hat u_{k,h,\ep}\|_{L^2(\cup_{i=1}^M B_{2^{-k+1}\sigma}(x_i);\R^2)}+\|u-\hat u_{k,h,\ep}\|_{L^2(T_{k,h,\ep};\R^2)}\\
\le 2(M|B_{2^{-k+1}\sigma}|+|T_{k,h,\ep}|)\to 0\qquad\textrm{ as }\ep\to 0,\,k\to\infty.
\end{multline}
Therefore, \eqref{uconv} immediately follows by \eqref{almostclaim} and \eqref{negligclaim}.

Now we pass to the proof of \eqref{enconv}.
We set
\begin{align*}
& E_{k,\ep}:=\{(i,j)\in\Omega_{\ep}^1\,:\,[i,j]\cap\cup_{i=1}^M \partial C_{i,k}\neq\emptyset\},
%\\
%& J_{k,h,\ep}:=\{(i,j)\in\Omega_{\ep}^1\,:\,[i,j]\cap T_{k,h,\ep}\neq\emptyset\},
\end{align*}
and
\begin{align*}
N_{k,h,\ep}&:=\frac 1 2\sum_{(i,j)\in E_{k,\ep}\setminus (T_{k,h,\ep})_\ep^1}f_{\ep}^{\nn}(\ffi_{k,h,\ep}(j)-\ffi_{k,h,\ep}(i)),\\
N'_{k,h,\vep} & := \sum_{i = 1}^M \left( \F_\ep^{\nn}\left(\ffi_{k,h,\ep},C_{i,k}\setminus T_{k,h,\ep}\right) - \pi \log 2 \right), \\
I_{k,h,\vep}& := \sum_{i = 1}^M \left(\F_\ep^{\nn}\left(\ffi_{k,h,\ep}, B_{2^{-k-1}\sigma}(x_i)\right)-\pi |\log\textstyle  \frac \vep {2^{-k-1}\sigma}|\right), \\
	I'_{k,h,\vep} & := \F_\ep^{\nn}\left(\ffi_{k,h,\ep}, \Omega_{2^{-k}\sigma}(\mu) \setminus T_{k,h,\vep} \right) - M \pi |\log (2^{-k}\sigma)|, \\
	I''_{k,h,\vep} & :=\frac 1 2\sum_{(i,j)\in (T_{k,h,\ep})_\ep^1}f_{\ep}^{\nn}(\ffi_{k,h,\ep}(j)-\ffi_{k,h,\ep}(i)),
\end{align*}
We notice that
\begin{equation}\label{splitting}
	\F_\ep^{\nn}(\ffi_{k,h,\ep})-M\pi|\log\ep| = N_{k,h,\vep} + N'_{k,h,\vep} + I_{k,h,\vep} + I'_{k,h,\vep}+I''_{k,h,\ep}.
\end{equation}
We investigate the convergence of the terms $N_{k,h,\ep},\,N'_{k,h,\ep},\,I_{k,h,\ep},\,I'_{k,h,\ep},\,I''_{k,h,\ep}$  separately. 
By construction and by standard interpolation estimates, we have
\begin{equation}\label{firstt}
\limsup_{k\to \infty}\limsup_{h\to \infty}\limsup_{\ep\to 0} N_{k,h,\ep}=0.
\end{equation}
By \eqref{dausare} and again by standard interpolation estimates
\begin{equation}\label{thirdt0}
\limsup_{k\to\infty}\limsup_{h\to\infty}\limsup_{\ep\to 0}N'_{k,h,\ep}=0.
\end{equation}
By construction,
\begin{equation}\label{secondt}
\limsup_{k\to\infty}\limsup_{h\to\infty}\limsup_{\ep\to 0}I_{k,h,\ep}=M\gamma.
\end{equation}
By standard interpolation estimates, \eqref{saltolimitato}, and Lemma \ref{bulklemma}, we have 
\begin{equation}\label{thirdt}
\limsup_{k\to\infty}\limsup_{h\to\infty}\limsup_{\ep\to 0}I'_{k,h,\ep}=\W(u^\n).
\end{equation}
Now we prove  that 
\begin{equation}\label{lasterm}
\lim_{k\to\infty}\lim_{h \to \infty}\lim_{\ep\to 0} I''_{k,h,\vep}=\int_{S_u}|\nu_u|_1\ud\mathcal H^1.
\end{equation}
By construction,  for any $(i,j)\in (T_{k,h,\ep})_\ep^1$ we have
$$
\di(\ffi_{k,h,\ep}(j)-\ffi_{k,h,\ep}(i),\frac{2\pi}{\n}\Z)\le \ep\left\|\nabla \bar \ffi_{k,h}\right\|_{L^\infty(\Omega_{2^{-k-1}\sigma}(\mu);\R^2)}\le C_{k,h}\ep,
$$
where the last inequality follows by the very definition of $\bar\ffi_{k,h}$ in \eqref{newffi} (see also \eqref{cutoff}).
Therefore, by the very definition of $f_\ep^{\nn}$ (see \eqref{potential}), for $\ep$ sufficiently small there holds
\begin{align}\label{contosuibond}
&f_\ep^{\nn}(\ffi_{k,h,\ep}(j)-\ffi_{k,h,\ep}(i))\le C_{k,h}^2\ep^2\textrm{ if }\di(\ffi_{k,h,\ep}(j)-\ffi_{k,h,\ep}(i),{2\pi}\Z)\le C_{k,h}\ep,\\ \nonumber
&f_\ep^{\nn}(\ffi_{k,h,\ep}(j)-\ffi_{k,h,\ep}(i))=\ep\textrm{ if }\di(\ffi_{k,h,\ep}(j)-\ffi_{k,h,\ep}(i),\frac{2\pi}{\n}(\Z\setminus\n\Z))\le C_{k,h}\ep.
\end{align}
Let $\phi\in C(\R\times\R\times\mathcal S^1)$ be defined by
$$
\phi(a,b,\nu):=\left\{\begin{array}{ll}
|\sin(\n(a-b))|\,|\nu|_1&\textrm{if }\di(a-b,2\pi\Z)\le\frac{\pi}{2\n}\\
|\nu|_1&\textrm{otherwise}.
\end{array}\right.
$$
Notice that given any segment $I\subset\R^2$ with normal $\nu_I$, we have that $\ep$ times the number of the bonds in $(\R^2)_\ep^1$  intersecting $I$ converges, as $\ep\to 0$, to $\mathcal H^1(I)|\nu_I|_1$.
This fact, together with \eqref{contosuibond}, yields
\begin{equation}\label{almend}
\lim_{\ep\to 0}I''_{k,h,\vep}=\int_{S_{\bar\ffi_{k,h}}\cap\Omega_{2^{-k-1}\sigma}(\mu)}\phi(\bar\ffi_{k,h}^+, \bar\ffi_{k,h}^-,\nu_{\bar\ffi_{k,h}})\ud\mathcal H^1.
\end{equation}
Therefore,  by using Lemma \ref{bulklemma} (iii) and the fact that $[\bar\ffi_{k,h}]=[\ffi_{k,h}]$ (see \eqref{cutoff} and \eqref{newffi}), we obtain
\begin{multline}
	\label{Dconv}
	\lim_{k\to\infty}\lim_{h \to \infty}\lim_{\ep\to 0} I''_{k,h,\vep} \\
=\lim_{k\to\infty}\lim_{h \to \infty}\int_{S_{\bar\ffi_{k,h}}\cap\Omega_{2^{-k-1}\sigma}(\mu)}\phi(\bar\ffi_{k,h}^+, \bar\ffi_{k,h}^-,\nu_{\bar\ffi_{k,h}})\ud\mathcal H^1 \\
=\lim_{k\to\infty}\int_{S_{\ffi_{k}}\cap\Omega_{2^{-k-1}\sigma}(\mu)}\phi(\ffi_{k}^+, \ffi_{k}^-,\nu_{\ffi_{k}})\ud\mathcal H^1\\
=\lim_{k\to\infty}\int_{S_u\cap \Omega_{2^{-k-1}\sigma}(\mu)}|\nu_u|_1\ud\mathcal H^1=\int_{S_u}|\nu_u|_1\ud\mathcal H^1,
\end{multline}
which is exactly \eqref{lasterm}.

Finally, by\eqref{splitting} and by summing \eqref{firstt}, \eqref{secondt}, \eqref{thirdt0} \eqref{thirdt} and \eqref{lasterm}, we get \eqref{enconv}, which concludes the proof. 
\end{proof}
\vskip4mm
\textbf{Acknowledgements.}  
The work of RB, MC, LDL was supported by the DFG Collaborative Research Center TRR 109, ``Discretization in
Geometry and Dynamics''. Part of this work was carried out while MP was visiting TUM thanks to the Visiting Professors Program
funded by the Bavarian State Ministry for Science, Research and the Arts.

\end{document}